\documentclass[leqno,twoside, 12pt]{amsart}
\usepackage{amssymb, latexsym, esint}
\usepackage{color}

\usepackage{amsthm}
\usepackage{amssymb,amsfonts}
\usepackage[all,arc]{xy}
\usepackage{enumerate}
\usepackage{mathrsfs}
\usepackage{amsmath}
\usepackage{verbatim}

\usepackage[left=2.1cm,right=2.1cm,top=2.9cm,bottom=2.9cm]{geometry}

\theoremstyle{plain}
\numberwithin{equation}{section} \numberwithin{figure}{section}

\newtheorem{theorem}{Theorem}[section]
\newtheorem{lemma}[theorem]{Lemma}
\newtheorem{proposition}[theorem]{Proposition}
\newtheorem{corollary}[theorem]{Corollary}
\newtheorem{definition}[theorem]{Definition}
  \usepackage{epsfig} 
\usepackage{graphicx}  \usepackage{epstopdf}
\theoremstyle{definition}
\newtheorem{remark}[theorem]{Remark}

\newcommand{\R}{\mathbb R}
\newcommand{\N}{\mathbb N}

\numberwithin{equation}{section}

\usepackage[bookmarksnumbered,colorlinks, linkcolor=blue, citecolor=red, pagebackref, bookmarks, breaklinks]{hyperref}

\begin{document}


\title{Function spaces and potential theory in the Orlicz setting}

\author{Pablo Ochoa}

\address{P. Ochoa. \newline Universidad Nacional de Cuyo, Fac. de Ingenier\'ia. CONICET. Universidad J. A. Maza\\Parque Gral. San Mart\'in 5500\\
Mendoza, Argentina.}
\email{pablo.ochoa@ingenieria.uncuyo.edu.ar}

\author{Ariel Salort}
\address{A. Salort \newline Departamento de Matem\'aticas y Ciencia de Datos, Universidad San Pablo-CEU, CEU Universities, Urbanizaci\'on Montepr\'incipe, 28660 Boadilla del Monte, Madrid, Spain. }
\email{\tt ariel.salort@ceu.es}

\parskip 3pt
\subjclass[2020]{46E30, 46E35, 31A15, 46G20, 46T30}
\keywords{Orlicz spaces, fractional Orlicz-Sobolev spaces,  Potential theory, Bessel and Lizorkin-Triebel spaces,  Distribution spaces}

\begin{abstract}
In this article, we study certain transcendental function spaces arising in potential theory within the framework of Orlicz spaces. Specifically, we generalize Bessel and Lizorkin–Triebel spaces to the nonstandard setting of Orlicz spaces. We recover classical results from potential theory, such as the fact that Bessel–Orlicz spaces of integer order coincide with Orlicz–Sobolev spaces (Calderón type theorem), and we establish inclusion results for fractional orders. Moreover, we prove a Strauss-type lemma for potential spaces. In the last sections, we  show that certain Orlicz–Lizorkin–Triebel spaces coincide with Bessel–Orlicz spaces, and we provide a useful atomic decomposition for theses spaces. 
\end{abstract}
\maketitle

\section{Introduction}
In the early 1900s, a new framework for partial differential equations emerged to transcend the limitations of classical $C^k$ regularity. Through the pioneering works of Riesz, Nikol’skii, and Besov, fractional integration became deeply integrated with harmonic analysis and potential theory. To mention only a few fundamental references, we cite \cite{AS, D, grafakos, Riesz, St, Triebel}.

The Riesz potential $I_s$ (see \cite{Riesz}) serves as a fundamental smoothing operator that formally inverts fractional powers of the Laplacian, $(-\Delta)^{-s/2}$. While it is an essential tool for capturing local regularity, it possesses a significant structural limitation when defined on the entire space $\mathbb{R}^n$ due to its lack of global integrability: 
$$
I_s = \mathcal{F}^{-1}(|\xi|^{-s}) \implies I_s(x) \approx |x|^{s-n} \text{ as } |x| \to \infty.
$$
Consequently, $I_s \notin L^1(\mathbb{R}^n)$, which prevents the operator from being bounded on $L^p(\mathbb{R}^n)$ for any $1 \le p < \infty$. \noindent This deficiency motivates the introduction of the Bessel kernel, $G_s$ by replacing the purely homogeneous symbol $|\xi|^{-s}$ with  $(1 + |\xi|^2)^{-s/2}$, 
$$
G_s = \mathcal{F}^{-1}\left((1 + |\xi|^2)^{-s/2}\right) \implies G_s(x) \approx |x|^{s-n} \text{ as } |x|\to 0 \text{ and } G_s(x) \approx e^{-|x|} \text{ as } |x|\to\infty.
$$ 
This  ensures that $G_s \in L^1(\mathbb{R}^n)$ and leads to the robust representation $u = G_s * f$, where $f = (I - \Delta)^{s/2} u$ acts as the density or fractional derivative of $u$. The set of functions  expressed as $G_s * f$ for some $f \in L^p(\mathbb{R}^n)$ is  the \emph{Bessel potential space} $H^{s, p}(\mathbb{R}^n)$. 

These spaces were introduced and studied in the early 1960s by N. Aronszajn and K. Smith \cite{ASmith}, N. Aronszajn, F. Mulla, and P. Szeptycki  \cite{AS}, and A.P. Calderón \cite{Calderon}, who, in 1961,  proved a fundamental theorem showing the equivalence between the classical Sobolev space $W^{k,p}(\mathbb{R}^n)$ for $k \in \mathbb{N}$ and $p\in (1, \infty)$ and  the Bessel potential space $H^{k,p}(\mathbb{R}^n)$. In 1970, E. Stein popularized and refined these spaces in his book \cite{St}. Later,  the  theory was integrated into the broader Lizorkin-Triebel and Besov framework, see for instance \cite{Triebel}. 

An approach to potential spaces within local Orlicz-type settings is developed in \cite{A96, A97, A10, EGO, GHN, S}, where their fundamental formulations and primary properties are established. For related results regarding norm equivalences in the nonlocal case, we refer the reader to \cite{BC}.

The primary objective of this article is to introduce a class of potential spaces defined by growth functions more general than standard power laws, namely, those characterized by a Young function $A$.  We aim to study the fundamental properties of these spaces and establish their relationship with nonlocal Orlicz-Sobolev spaces. We recall that the \emph{fractional Orlicz-Sobolev spaces} $W^{s,A}(\R^n)$ introduced in \cite{ACPS, FBS} are the natural generalizations of the classical fractional order Sobolev spaces $W^{s,p}(\R^n)$, $p>1$.  Given a Young function $A$ and a fractional parameter $s\in (0,1)$ it is defined as
$$
W^{s,A}(\R^n)=\left\{u\colon \R^n\to \R \text{ s.t.} \int_{\R^n} A(|u|)\,dx + \iint_{\R^n\times \R^n} A\left(\frac{|u(x)-u(y)|}{|x-y|^{s}} \right)\frac{dxdy}{|x-y|^n} <\infty \right\}.
$$
For properties and further research on these spaces we refer to \cite{ACPS, ACPS1,ABS, BOT, FBS}.

Motivated by these considerations, in Section \ref{sec.potential.spaces}, for any $s \in \mathbb{R}$, we define the \emph{Bessel-Orlicz potential space} $H^{s, A}(\mathbb{R}^n)$ as the set
$$
H^{s, A}(\mathbb{R}^n):=\left\lbrace u\in L^A(\mathbb{R}^n): u= G_s*f, \text{ for some }f\in L^A(\mathbb{R}^n)\right\rbrace,
$$
which we endow with the norm
$$
\|u\|_{H^{s, A}(\mathbb{R}^n)}:=\|f\|_{L^A(\mathbb{R}^n)},
$$
where $\|\cdot\|_{L^A(\mathbb{R}^n)}$ denotes the Luxemburg norm associated with the Orlicz space $L^A(\mathbb{R}^n)$.

Our first results state the relation between these spaces. In Theorems \ref{sobolev include in Bessel} and \ref{Bessel include in Sobolev}, we address the case $s=1$ and establish the equivalence 
$$
H^{1,A}(\mathbb{R}^n) = W^{1,A}(\mathbb{R}^n),
$$ 
provided that both the Young function $A$ and its conjugate $\hat{A}$ satisfy the $\Delta_2$ condition, that is, when assuming the growth control for some constants $1<p^-\leq  p^+<\infty$ such that
$$
p^- A(t) \leq tA'(t) \leq p^+ A(t).
$$
Moreover, we prove that the previous relation can be generalized  for any $m\in\N$, i.e., 
$$
H^{m,A}(\mathbb{R}^n) = W^{m,A}(\mathbb{R}^n).
$$ 
The primary challenge in this setting is that the gradient of the Bessel kernel, $\nabla G_1$, does not belong to $L^1(\mathbb{R}^n)$, which precludes a simple convolution argument. Due to this, the core of our proof lies in proving that $\nabla G_1$ defines a Calderón-Zygmund singular integral operator. Given the assumed growth conditions on $A$ and $\hat A$, we prove that this operator is bounded from $L^A(\mathbb{R}^n)$ to itself, thereby bridging the gap between the potential-based and derivative-based definitions of these spaces.

In Theorem \ref{teo.s1} we prove that for any $0<s<s'<1$, the continuous inclusion 
$$
H^{s',A}(\R^n) \subset W^{s,A}(\R^n)
$$ 
holds for any Young function $A$ regardless of assuming the $\Delta_2$ condition for either  $A$ or $\hat A$. We obtain both norm and modular relations between spaces. Conversely, for any $0<s<s'<1$, in Theorem \ref{teo.s2} we prove that the continuous embedding 
$$
W^{s',A}(\R^n)\subset H^{s,A}(\R^n)
$$ 
holds provided that $A$ satisfies the $\Delta_2$ condition.

Our proof differs from existing literature where, to the best of our knowledge, standard methods rely heavily on the homogeneity and power-like behavior of the modulars. The cornerstone of our nonlocal approach is the following continuity result for the Bessel kernel defined as  $K(z,x,t):=(G_\alpha(z-x)-G_\alpha(z-x+t))|t|^{-\gamma}$. In Proposition \ref{prop.cont}, we establish that for $\alpha>\gamma>0$, and with the measure $d\mu=|t|^{-n}dxdt$, there exists a constant $C>0$ such that:
$$
\int_{\R^n} \iint_{\R^n \times \R^n} |K(z,x,t) v(x,t) w(z)|\,d\mu \,dz  \leq C \|v\|_{L^A( d\mu)} \|w\|_{L^{\hat A}(\R^n)}
$$
holds for any $v\in L^A( d\mu)$ and $w\in L^{\hat A}(\R^n)$.  We point out that in the case of powers, a similar result was provided in \cite[Theorem 6.3]{AS}, however the proof depends decisively on the homogeneity of power functions. Hence, Proposition \ref{prop.cont} is established with a completely different method.  

Examples of Young function which can be considered in our results include
\begin{itemize}
\item Zygmund type functions: $A(t)=t^p\log^q (1+t^r)$ for $p>1$, $q\geq 0$, $r>0$;
\item Power combinations: $A(t)=t^p + t^q$, $1<p\leq q <\infty$;
\item Iterated logarithmic functions: $A(t) = t^p \log^\alpha(1 + t) \log^\beta(1 + \log(1 + t))$,  $p > 1$ y $\alpha, \beta \in \mathbb{R}$;
\item  Generalized Power-log functions: $A(t) = \int_0^{|t|} s^{p-1} \log^\alpha(1+s) \, ds$, $1 < p < \infty, \ \alpha \in \mathbb{R}$
\end{itemize}

In the particular case of power function, i.e., $A(t)=t^p$, with $p> 1$, our theorems recover the classical result for Bessel potential spaces developed in the 1960s by Aronszajn, Smith,  Calder\'on and Stein. For a detailed treatment of these classical cases, we refer the reader to Theorem 11.1 in the monograph \cite{AS} for the nonlocal case and \cite[Theorem p. 88]{Triebel} for the local case.

It is well established that radial functions exhibit enhanced regularity and decay properties compared to the general case. In Theorem \ref{teo.straus} we prove a Strauss-type inequality  asserting that for $ sp^->1$, any radial function $u \in H^{s,A}(\mathbb{R}^n)$ satisfies the decay
$$
|u(x)|\leq C \frac{|x|^{1-n}}{\hat A^{-1}(|x|^{1-n})} \|u\|_{H^{s,A}(\R^n)}, \qquad x\in\R^n,
$$
where the positive constant $C$ depends solely on the dimension $n$ and $p^\pm$. We remark that in the case $A(t)=t^p$, with $sp>1$, $s\in (0,1)$, Theorem \ref{teo.straus} recover the well-known estimate $|u(x)|\leq C|x|^{-(n-1)/p}\|u\|_{H^{s,p}(\R^n)}$, valid for any $u\in H^{s,p}_{rad}(\R^n)$. See for instance \cite{EP}. Moreover, the Strauss lemma has applications to local and non-local H\'enon-type equations (see for instance \cite{OS1} and \cite{OS2}).

We also consider a class of spaces based on the Littlewood-Paley decomposition. Let  $\Phi \in \mathcal{S}$ with Fourier transform $\hat \Phi$  a function such that $\text{supp}\,\hat \Phi \subset B(0, 1)$ and $\hat \Phi(\xi)=1$ on $B(0, 1/2)$.
For $k\in \mathbb{Z}$ denote $\Phi_k (x)=2^{nk}\Phi(2^k x)$ and $\phi_k(x)=\Phi_k(x)-\Phi_{k-1}(x)$. We define the \emph{Orlicz-Lizorkin-Triebel spaces} as follows
$$
F_s^{A,q}(\R^n):=\left\{u\in \mathcal{S}'\colon \|\Phi*u\|_{L^A(\R^n)} + \left\| \left( \sum_{k=1}^\infty |2^{s k}\phi_k* u|^q\right)^\frac1q \right\|_{L^A(\R^n)}<\infty \right\}, \quad q>1,
$$
endowed with the norm
$$
\|u\|_{F_s^{A,q}(\R^n)}=\|\Phi*u\|_{L^A(\R^n)} + \left\|\{2^{s k}\phi_k*u\}_{k=1}^\infty \right\|_{L^A(\ell^q)}.
$$ 
In Theorem \ref{teo.lt} we prove the equivalence with the Orlicz-Bessel potential spaces:
$$
F_s^{A,2}(\R^n)=H^{s,A}(\R^n)
$$
with equivalence of norms. Moreover, in Theorems \ref{atomic decomposition} and \ref{atomic decomposition.2} we provide for an atomic decomposition characterization of these spaces.
 
The article is organized as follows: in Section \ref{sec.prelim} we introduce the basic notation and preliminary definitions on Orlicz spaces and Bessel Kernels. Section \ref{sec.bessel} is devoted to prove our results related with Bessel-Orlicz potential spaces, giving in Section \ref{sec.strauss} a Strauss type lemma for potential spaces. Finally, in Section \ref{sec.lt} we deal with the Orlicz-Lizorkin-Triebel spaces.

\section{Preliminaries} \label{sec.prelim}

\subsection{Young functions}

We consider the following class of Young functions $A:[0, \infty) \to [0, \infty)$,
\begin{enumerate}
\item[(i)] $A$ is convex, increasing, and $A(0) = 0$.
\item[(ii)] $A$ is super-linear at zero and at infinite, that is 
$$
\lim_{t\rightarrow 0+} \frac{A(t)}{t}=0\quad \text{and}\quad \lim_{t\rightarrow \infty} \frac{A(t)}{t}=\infty.
$$
\end{enumerate}

The above Young functions $A$ admit the representation
$$
A(t)=\int_0^t a(\tau)\,d\tau,
$$
where the function $a$ is right-continuous for $t \geq 0$, positive for $t>0$, non-decreasing and
satisfies the conditions 
$$
a(0)=0, \qquad \lim_{t\to\infty} a(t)=\infty.
$$
We extend $A$ evenly to $\R$. Throughout the paper, we call $A$ a Young function if $A$ satisfies the above properties.  

\noindent  Associated to $A$ is  the Young function  complementary to it which is defined as follows:
\begin{equation}\label{Gcomp}
\hat{A} (t) := \sup \left\lbrace tw-A(w) \colon w>0 \right\rbrace .
\end{equation}

The following Young-type inequality holds
\begin{equation}\label{2.5}
t_1t_2 \leq A(t_1)+\hat{A} (t_2) \text{  for every } t_1,t_2 \geq 0.
\end{equation}

A Young function $A$ satisfies the $\Delta_2$ condition if there exists $C>2$ such that 
$$
A(2t)\leq 2A(t) \quad \text{ for all } t\geq 0.
$$

\noindent By \cite[Theorems 4.1 and 4.3, Chapter 1]{KR},   a Young function  $A$ and its complementary Young function $\hat A$ satisfy the $\Delta_{2}$ condition if and only if there is $1<p^-\leq p^+ <\infty$ such that
\begin{equation}\label{eq.p}
p^+\leq \frac{ta(t)}{A(t)} \leq p^+, ~~~~~\forall\, t>0.
\end{equation}
In particular, \eqref{eq.p} implies that
\begin{align} \label{potencias}
\min\{t^{p^-}, t^{p^+}\}&\leq A(t) \leq \max\{t^{p^-}, t^{p^+}\}, \quad t\geq 0.
\end{align}

\subsection{Orlicz-Sobolev spaces} Given a Young function $A$, we define the \emph{Orlicz space}
\begin{align*}
L^A(\R^n) = \{ u\colon  \R^n \to \R : \Phi_A(u)<\infty\},
\end{align*}
and the \emph{fractional Orlicz-Sobolev space} of order $s\in (0,1]$ 
$$
W^{s,A}(\R^n)=\{ u \colon \R^n \to \R : \Phi_A(u)+\Phi_{s,A}(u) < \infty\},
$$
where $\Phi_A$ and $\Phi_{s,A}$ are the modulars given by
\begin{align*}
\Phi_A(u)&:=\int_{\R^n} A(|u|)\,dx \\
\Phi_{s,A}(u)&:=
\begin{cases}
\displaystyle\iint_{\R^n\times\R^n} A(|D^s u|)\,d\mu, & \text{ if } s\in(0,1),\\
\displaystyle\int_{\R^n} A(|\nabla u|)\,dx & \text{ if } s=1,
\end{cases}
\end{align*}
where $d\mu=|x-y|^{-n}dxdy$ and $D^s u=\frac{u(x)-u(y)}{|x-y|^s}$.

The spaces $L^A(\R^n)$ and $W^{s,A}(\R^n)$ are endowed with the Luxemburg norms defined as
$$
\|u\|_{L^A(\R^n)} = \inf\left\{\lambda>0 \colon \Phi_A\left(\frac{u}{\lambda} \right)\leq 1 \right\}
$$
and
$$
\|u\|_{W^{s,A}(\R^n)} = \|u\|_{L^A(\R^n)} + [u]_{W^{s,A}(\R^n)},
$$
being $[u]_{W^{s,A}(\R^n)}$ the so-called Gagliardo-like seminorm defined as
$$
[u]_{W^{s,A}(\R^n)} = \inf\left\{ \lambda >0  \colon \Phi_{s,A}\left(\frac{u}{\lambda} \right)\leq 1\right\}.
$$ 
 The following H\"older's type inequality holds for Orlicz spaces (see \cite[Theorem 3.7.5]{kufner}).
\begin{lemma}
Let $A$ be a Young function with complementary function $\hat A$. If $u\in L^A(\R^n)$ and $v\in L^{\hat A}(\R^n)$ then $uv\in L^1(\R^n)$ and
$$
\int_{\R^n} |uv|\,dx \leq 2 \|u\|_{L^A(\R^n)} \|v\|_{L^{\hat A}(\R^n}.
$$
\end{lemma}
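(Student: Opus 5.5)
The plan is to reduce the inequality to the Young inequality \eqref{2.5} after normalizing both functions by their Luxemburg norms. First I dispose of the trivial cases. If $\|u\|_{L^A(\R^n)}=0$, then $\Phi_A(u/\lambda)\le 1$ for every $\lambda>0$. Since $A$ is increasing with $A(t)>0$ for $t>0$, this forces $u=0$ a.e., and the same holds for $v$. So I may assume $\alpha:=\|u\|_{L^A(\R^n)}>0$ and $\beta:=\|v\|_{L^{\hat A}(\R^n)}>0$.

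Next I check that the infimum defining the Luxemburg norm is attained, that is, $\Phi_A(u/\alpha)\le 1$. Take $\lambda_k\downarrow\alpha$ with $\Phi_A(u/\lambda_k)\le 1$. Then $A(|u|/\lambda_k)\uparrow A(|u|/\alpha)$ pointwise, by monotonicity and continuity of $A$ (a convex finite function on $[0,\infty)$ is continuous). The monotone convergence theorem then gives $\Phi_A(u/\alpha)\le 1$. The same argument gives $\int_{\R^n}\hat A(|v|/\beta)\,dx\le 1$; here I need $\hat A$ to be a finite Young function, which follows from the superlinearity at infinity assumed in (ii).

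Finally I apply \eqref{2.5} pointwise with $t_1=|u(x)|/\alpha$ and $t_2=|v(x)|/\beta$:
$$
\frac{|u(x)v(x)|}{\alpha\beta}\le A\Big(\frac{|u(x)|}{\alpha}\Big)+\hat A\Big(\frac{|v(x)|}{\beta}\Big).
$$
Integrating over $\R^n$ gives $\int_{\R^n}|uv|\,dx\le \alpha\beta\,(1+1)=2\|u\|_{L^A(\R^n)}\|v\|_{L^{\hat A}(\R^n)}$. In particular $uv\in L^1(\R^n)$, since the integrand is measurable and dominated by an integrable function.

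I expect the only delicate step to be the attainment of the infimum, i.e.\ passing to the limit $\lambda_k\to\alpha$ via monotone convergence and continuity of $A$ and $\hat A$. The remainder is a direct pointwise estimate.
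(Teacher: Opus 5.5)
Your proof is correct. It is the standard argument for this inequality: normalize by the Luxemburg norms, check that the normalized modulars are at most $1$, and integrate the pointwise Young inequality \eqref{2.5}. The paper itself does not prove the lemma; it only cites \cite[Theorem 3.7.5]{kufner}.

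A small streamlining is available. Take arbitrary $\lambda>\|u\|_{L^A(\R^n)}$ and $\mu>\|v\|_{L^{\hat A}(\R^n)}$; both are admissible in the Luxemburg infimum because $A$ and $\hat A$ are nondecreasing. The same pointwise estimate then gives $\int_{\R^n}|uv|\,dx\le 2\lambda\mu$, and you let $\lambda,\mu$ decrease to the norms.

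This removes both the attainment-of-the-infimum step and the zero-norm case. It is a real simplification, because your justification of the zero-norm case needs more than $A(t)>0$. It needs $A$ and $\hat A$ to be positive on $(0,\infty)$ and unbounded, which holds here by convexity and the superlinearity in (ii).
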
 

\noindent In \cite[Theorem 4.1]{BC} it is proved the following inequality for convolutions in Orlicz spaces.
 \begin{lemma} \label{lema.1}
 Let $A$ be a Young function. Then
 $$
 \|u*v\|_{L^A(\R^n)} \leq \|v\|_{L^1(\R^n)}\|u\|_{L^A(\R^n)}
 $$
 for every $u\in L^A(\R^n)$ and $v\in L^1(\R^n)$. Moreover,
 $$
 \int_{\R^n}A(|u*v|)\,dx \leq\int_{\R^n} A\left(\|v\|_{L^1(\R^n)} |u|\right)\,dx.
 $$
 \end{lemma}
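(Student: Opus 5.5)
We prove the modular inequality first; the norm inequality then follows from it by scaling.

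\emph{Modular inequality.} We may assume $\|v\|_{L^1(\R^n)}>0$, since otherwise $u*v=0$ almost everywhere and there is nothing to prove. Set $M:=\|v\|_{L^1(\R^n)}$. Then $d\nu(y):=|v(y)|\,dy/M$ is a probability measure, and pointwise
$$
|u*v(x)|\le \int_{\R^n}|u(x-y)||v(y)|\,dy = \int_{\R^n} M|u(x-y)|\,d\nu(y).
$$
Since $A$ is increasing and convex, Jensen's inequality with respect to $\nu$ gives
$$
A(|u*v(x)|)\le \int_{\R^n} A(M|u(x-y)|)\,d\nu(y).
$$
We integrate in $x$ and apply Tonelli's theorem, which is legitimate because everything is nonnegative and measurable. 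Translation invariance of Lebesgue measure then yields
$$
\int_{\R^n} A(|u*v|)\,dx\le \int_{\R^n}\left(\int_{\R^n}A(M|u(x-y)|)\,dx\right)d\nu(y)=\int_{\R^n}A(M|u|)\,dx,
$$
because $\nu$ has total mass one. This is exactly the second assertion.

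\emph{Norm inequality.} Take any $\lambda>\|u\|_{L^A(\R^n)}$, so that $\Phi_A(u/\lambda)\le 1$. Apply the modular inequality to $u/(M\lambda)$ in place of $u$:
$$
\int_{\R^n} A\!\left(\frac{|u*v|}{M\lambda}\right)dx \le \int_{\R^n}A\!\left(\frac{|u|}{\lambda}\right)dx\le 1.
$$
Hence $\|u*v\|_{L^A(\R^n)}\le M\lambda$, and letting $\lambda\downarrow \|u\|_{L^A(\R^n)}$ gives the claim.

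\emph{Well-definedness.} It remains to check that $u*v$ is defined almost everywhere, i.e.\ that the integral $\int_{\R^n}|u(x-y)||v(y)|\,dy$ is finite for a.e.\ $x$. Since $u\in L^A(\R^n)$, there is $\lambda$ with $\int_{\R^n} A(|u|/\lambda)\,dx<\infty$. The Jensen/Tonelli computation above, applied to the nonnegative function $\int_{\R^n}|u(x-y)||v(y)|\,dy$ and to $u/(M\lambda)$, shows that
$$
\int_{\R^n} A\!\left(\frac{1}{M\lambda}\int_{\R^n}|u(x-y)||v(y)|\,dy\right)dx\le \int_{\R^n}A\!\left(\frac{|u|}{\lambda}\right)dx<\infty.
$$
Because $A(t)\to\infty$ as $t\to\infty$, the inner integral is finite almost everywhere.

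None of these steps is a real obstacle. The only points requiring care are the normalization of $\nu$ to a probability measure (after disposing of the case $v=0$) and the justification of Tonelli's theorem.
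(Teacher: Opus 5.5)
Your proof is correct. The paper does not prove this lemma; it imports it from \cite[Theorem 4.1]{BC}, so there is no in-paper argument to compare against. What you give is the standard self-contained proof: Jensen's inequality for the probability measure $|v(y)|\,dy/\|v\|_{L^1(\R^n)}$, then Tonelli and translation invariance for the modular estimate, then rescaling by $M\lambda$ to pass to the Luxemburg norm.

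Your argument makes explicit what the citation leaves implicit: only convexity and monotonicity of $A$ are used. Hence no $\Delta_2$ condition on $A$ or $\hat A$ is needed, and the lemma applies equally to $\hat A$. This matches how the paper uses it, e.g.\ in Theorem~\ref{teo.s1}, which is stated without $\Delta_2$, and in Proposition~\ref{prop.cont}, where it is applied in $L^{\hat A}(\R^n)$.

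One small point to tighten concerns your well-definedness paragraph. There you apply Jensen at points $x$ where $\int_{\R^n}|u(x-y)||v(y)|\,dy$ may a priori equal $+\infty$. This is harmless, but you should say why. One option is to apply Jensen to the truncations $\min\{M|u(x-\cdot)|,N\}$ and let $N\to\infty$, using monotone convergence and continuity of $A$ with the convention $A(+\infty)=+\infty$. Alternatively, superlinearity gives $A(t)\ge t-T$ for some $T>0$, so both sides are infinite together. For the same reason, that paragraph belongs logically before the modular estimate, whose first line presupposes that $u*v(x)$ is defined.
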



\subsection{Bessel kernels} \label{sec.potential.spaces}

Given $s\in \R$, we consider the Bessel kernel
$$
G_s(x)=\mathcal{F}^{-1}\left((1+|x|^2)^{-s/2}\right),
$$
where $\mathcal{F}(f)$ denotes the Fourier Transform of $f$.  Here, we allow $s\in \R$ since $\hat G_s(\xi)=(1+|\xi|^2)^{-s/2}$ can be identify with a tempered distribution for all $s$.  We also use the notation $\hat f$ to denote the Fourier transform and $\check f$ to denote the inverse Fourier transform.  According to \cite[Proposition 2]{S} and \cite[pages 13 and 27]{AS}, we recall some useful properties of $G_s$.
\begin{lemma} \label{lema.G}
The Bessel kernel satisfies the following properties:
\begin{itemize}
\item[i)] $G_s\in L^1(\mathbb{R}^n)$ for all $s>0$,
\item[ii)] $\nabla G_s\in L^1(\mathbb{R}^n)$ for all $s>1$,
\item[iii)] $G(x) = G(-x)$ for all $x\in \R^n$,
\item[iv)] $G(x)$ is decreasing.
\end{itemize}
\end{lemma}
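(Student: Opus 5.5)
The plan is to prove the four properties of Lemma \ref{lema.G} from the subordination (Gamma-function) representation of the Bessel kernel. For $s>0$ I would start from the identity $(1+|\xi|^2)^{-s/2}=\frac{1}{\Gamma(s/2)}\int_0^\infty e^{-\delta(1+|\xi|^2)}\delta^{s/2}\frac{d\delta}{\delta}$. Inverting the Gaussian in $\xi$ (with the normalization $\hat f(\xi)=\int f(x)e^{-2\pi i x\cdot\xi}dx$, so that the kernel is $e^{-\pi|x|^2/\delta}$ after rescaling) I would aim for
$$
G_s(x)=\frac{1}{(4\pi)^{s/2}\Gamma(s/2)}\int_0^\infty e^{-\pi|x|^2/\delta}\,e^{-\delta/(4\pi)}\,\delta^{\frac{s-n}{2}}\,\frac{d\delta}{\delta}.
$$
The interchange of integrals is justified by Fubini, since every term is positive.

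Properties iii) and iv) then follow by inspection. The integrand is positive and depends on $x$ only through $|x|$, which gives $G_s>0$, $G_s(x)=G_s(-x)$, and full radiality. It is also strictly decreasing in $|x|$, so $G_s$ is radially decreasing; this is the sense in which iv) should be read. For i), Tonelli lets me integrate in $x$ first, using $\int e^{-\pi|x|^2/\delta}dx=\delta^{n/2}$. This gives
$$
\|G_s\|_{L^1}=\frac{1}{(4\pi)^{s/2}\Gamma(s/2)}\int_0^\infty e^{-\delta/4\pi}\delta^{s/2}\frac{d\delta}{\delta}=1,
$$
which is finite precisely because $s>0$. This value is also consistent with $\hat G_s(0)=1$.

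For ii), I differentiate under the integral sign, which is justified by dominated convergence on compact sets away from $x=0$. This gives
$$
|\nabla G_s(x)|= C\,|x|\int_0^\infty e^{-\pi|x|^2/\delta}e^{-\delta/4\pi}\delta^{\frac{s-n}{2}-1}\frac{d\delta}{\delta}.
$$
Integrating in $x$ by Tonelli, I use $\int |x|e^{-\pi|x|^2/\delta}dx=c_n\delta^{(n+1)/2}$. The result is
$$
\|\nabla G_s\|_{L^1}= C'\int_0^\infty e^{-\delta/4\pi}\delta^{\frac{s-1}{2}}\frac{d\delta}{\delta},
$$
which is finite exactly when $s>1$. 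The main technical point is this step: I need to make sure that the derivative is the distributional gradient on all of $\R^n$, including the origin. I would handle it by noting that the pointwise gradient is locally integrable when $s>1$, since near $0$ it behaves like $|x|^{s-n-1}$. A cutoff argument around the origin then shows that the boundary term vanishes, because $G_s(x)|x|^{n-1}\to0$. Alternatively, one could cite \cite[Proposition 2]{S} and \cite{AS} directly, as the paper does.
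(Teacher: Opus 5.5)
The paper gives no proof of Lemma \ref{lema.G}; it quotes \cite[Proposition 2]{S} and \cite[pages 13 and 27]{AS}. Your proposal is correct and supplies a self-contained derivation from the subordination formula, which is essentially how the cited sources (and Stein's book) obtain these facts.

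Your route also gives more than the lemma states:
\begin{itemize}
\item It gives positivity of $G_s$, hence the exact value $\|G_s\|_{L^1(\R^n)}=\hat G_s(0)=1$. The paper uses this value without justification in the corollary on the inclusion $H^{s_2,A}(\R^n)\subset H^{s_1,A}(\R^n)$ and in \eqref{ineq1}.
\item It gives radial monotonicity, which is the form of iv) actually used in the proof of Theorem \ref{teo.straus}.
\item It shows that ii) is sharp. The divergence of $\int_0^\infty e^{-\delta/4\pi}\,\frac{d\delta}{\delta}$ shows that the pointwise gradient of $G_1$ is not in $L^1(\R^n)$. This is exactly why the case $s=1$ needs the Calder\'on--Zygmund argument in Theorems \ref{sobolev include in Bessel} and \ref{Bessel include in Sobolev}.
\end{itemize}
The citation is shorter, but it leaves implicit that iii)--iv) concern $G_s$ with $s>0$ and that ``decreasing'' means radially decreasing, both of which you make explicit.

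Three small repairs are needed.
\begin{itemize}
\item Your displayed formula, with $e^{-\delta/4\pi}$ and $(4\pi)^{-s/2}$, is the kernel of $(1+4\pi^2|\xi|^2)^{-s/2}$ in the $e^{-2\pi i x\cdot\xi}$ convention. Equivalently, it is the kernel of $(1+|\xi|^2)^{-s/2}$ in the $e^{-ix\cdot\xi}$ convention. Only constants change, so i)--iv) are unaffected.
\item Interchanging the $\delta$-integral with the inverse Fourier transform is not a positivity argument, because that side is oscillatory. Test against $\varphi\in\mathcal S$ and use $\check\varphi\in L^1(\R^n)$ together with $\int_0^\infty e^{-\delta}\delta^{s/2-1}\,d\delta<\infty$. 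Positivity (Tonelli) only enters afterwards, on the $x$-side.
\item In your cutoff argument, the claim $G_s(x)|x|^{n-1}\to 0$ fails for $n=1$, since then $G_s(0)\in(0,\infty)$ when $s>1$. The boundary term still vanishes. Because $\int_{|x|=\varepsilon}\nu_i\,d\sigma=0$, you may replace $\varphi$ by $\varphi-\varphi(0)$. This bounds the term by a constant multiple of $\varepsilon^n G_s(\varepsilon)$, which tends to $0$ since $\omega_n\varepsilon^n G_s(\varepsilon)\le\int_{B_\varepsilon}G_s$ by radial monotonicity.
\end{itemize}
A cleaner fix for the last point is to differentiate each Gaussian weakly inside the $\delta$-integral. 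The Fubini step is then justified by the finiteness of the same integral $\int_0^\infty e^{-\delta/4\pi}\delta^{(s-1)/2}\,\frac{d\delta}{\delta}$ that you already computed. This way you need no information about $G_s$ near the origin.
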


The $L^1$-stability of the Bessel kernel $G_s$ and the behavior of its $L^1$-modulus of continuity is established in formulas 9.3 and 9.17 from \cite{AS} as follows.
\begin{lemma} \label{lema.2}
For $s\in(0,1)$, it holds that
$$
\int_{\R^n} |G_s(x+h)-G_s(x)|\,dx \leq \frac{C}{1-s} |h|^s.
$$
\end{lemma}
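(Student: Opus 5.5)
We prove the estimate of Lemma \ref{lema.2} by splitting the integral according to the size of $|x|$ compared with $|h|$. Throughout we use the pointwise bounds on the Bessel kernel for $0<s<1$: $G_s$ is positive and radial, $G_s(x)\le C|x|^{s-n}$ for $|x|\le 2$, $G_s(x)\le Ce^{-c|x|}$ for $|x|\ge 1$, and $|\nabla G_s(x)|\le C|x|^{s-n-1}$ near the origin with exponential decay at infinity. These are standard consequences of the subordination formula
$$
G_s(x)=c_s\int_0^\infty e^{-\pi|x|^2/\delta}\,e^{-\delta/4\pi}\,\delta^{(s-n)/2}\,\frac{d\delta}{\delta}.
$$
The case $|h|\ge 1$ is immediate, because by Lemma \ref{lema.G} (i)
$$
\int_{\R^n}|G_s(x+h)-G_s(x)|\,dx\le 2\|G_s\|_{L^1}\le 2\|G_s\|_{L^1}|h|^s.
$$
Since $G_s=\mathcal F^{-1}((1+|\xi|^2)^{-s/2})$ and $\int G_s=\hat G_s(0)=1$, the constant here is bounded independently of $s$. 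So we may assume $|h|<1$.

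For $|h|<1$ we split $\R^n$ into $B_{2|h|}=\{|x|<2|h|\}$ and its complement.

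\emph{Near region.} On $B_{2|h|}$ we estimate each term separately, since $x+h\in B_{3|h|}$:
$$
\int_{B_{2|h|}}|G_s(x+h)-G_s(x)|\,dx\le 2\int_{B_{3|h|}}G_s\le C\int_0^{3|h|}r^{s-n}r^{n-1}\,dr=\frac{C}{s}|h|^s.
$$

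\emph{Far region.} On $|x|\ge 2|h|$ we use the mean value theorem. Every point $\xi$ on the segment $[x,x+h]$ satisfies $|\xi|\ge |x|/2$, so
$$
|G_s(x+h)-G_s(x)|\le |h|\sup_{[x,x+h]}|\nabla G_s|\le C|h|\,|x|^{s-n-1}
$$
for $|x|\le 2$. For $|x|\ge 2$ the same argument gives $C|h|e^{-c|x|}$. Integrating,
$$
\int_{|x|\ge 2|h|}|G_s(x+h)-G_s(x)|\,dx\le C|h|\int_{2|h|}^{2}r^{s-2}\,dr+C|h|=\frac{C|h|}{1-s}\Big((2|h|)^{s-1}-2^{s-1}\Big)+C|h|\le \frac{C}{1-s}|h|^s.
$$
The last step uses $|h|<1$, so that $|h|\le|h|^s$. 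The factor $\frac{1}{1-s}$ comes exactly from $\int_{2|h|}^2 r^{s-2}\,dr$, which is consistent with the shape of the stated constant.

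\emph{Main obstacle.} The delicate point is the $s$-dependence of the constants. The near-region estimate produces $1/s$, and the kernel bound constants $c_s$ behave like $\Gamma(s/2)^{-1}\sim s$ as $s\to0$. Tracking these through the subordination formula, I expect the two effects to cancel, so that the constant stays bounded as $s\to 0$. The stated form $C/(1-s)$ then reflects only the blow-up at $s\to 1$, where $G_s$ approaches a kernel whose gradient is not integrable.

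The cleanest way to secure uniform constants is probably to differentiate under the integral in the subordination formula. One bounds $|\nabla e^{-\pi|x|^2/\delta}|\le C\delta^{-1/2}e^{-\pi|x|^2/(2\delta)}$ and integrates directly in $\delta$, which gives $|\nabla G_s(x)|\le C\,c_s\,|x|^{s-n-1}$ near the origin with explicit $s$-dependence. An alternative is to cite formulas (9.3) and (9.17) of \cite{AS} for these kernel bounds, as the paper indicates.
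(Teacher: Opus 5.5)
Your proof is correct, but it takes a different route from the paper. The paper does not prove this lemma at all; it quotes it from formulas (9.3) and (9.17) of \cite{AS}. You derive it directly:
\begin{itemize}
\item for $|h|\ge 1$ you use the trivial bound $2\|G_s\|_{L^1(\R^n)}=2$;
\item for $|h|<1$ you split at $|x|=2|h|$, using the size bound $G_s(x)\lesssim |x|^{s-n}$ on the near region;
\item on the far region you use the mean value theorem with $|\nabla G_s(x)|\lesssim |x|^{s-n-1}$, plus exponential decay.
\end{itemize}
The citation is shorter, but your argument makes the $s$-dependence visible. This matters because the paper later relies on the precise form $C/(1-s')$ in Theorem \ref{teo.s1}. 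In your argument the factor $1/(1-s)$ comes exactly from $\int_{2|h|}^2 r^{s-2}\,dr$, i.e.\ from the non-integrability of $\nabla G_1$.

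The point you leave as an expectation closes in one line. Drop the factor $e^{-\delta/4\pi}$ in the subordination formula and evaluate the remaining Gamma integrals. This gives, for all $x\neq 0$,
\[
G_s(x)\le c_s\,\Gamma\big(\tfrac{n-s}{2}\big)\,(\pi|x|^2)^{\frac{s-n}{2}},\qquad |\nabla G_s(x)|\le 2\pi c_s\,\Gamma\big(\tfrac{n+2-s}{2}\big)\,\pi^{\frac{s-n-2}{2}}\,|x|^{s-n-1},
\]
with
\[
c_s=(4\pi)^{-s/2}\Gamma(s/2)^{-1}=(4\pi)^{-s/2}\,\tfrac{s}{2}\,\Gamma\big(1+\tfrac{s}{2}\big)^{-1}\le s .
\]
Because the bounds hold for all $x\neq 0$, your near region may extend past $|x|=2$ without harm. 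The factor $c_s\le s$ cancels the $1/s$ produced by $\int_0^{3|h|}r^{s-1}\,dr$. A different normalisation of the Fourier transform only rescales these constants.

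One correction: for $n=1$ the factor $\Gamma(\tfrac{1-s}{2})$ is of order $1/(1-s)$. So your assertion that the near-origin constant in $G_s(x)\le C|x|^{s-n}$ is uniform in $s$ fails in dimension one. The near region then contributes $C|h|^s/(1-s)$, which is still within the stated bound.
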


\subsection{Some further notation} The Schwartz class of rapidly decreasing $C^\infty$ functions on $\R^n$ is denoted by $\mathcal{S}$. The space of continuous linear functionals on $\mathcal{S}$ is called the space of tempered distributions and denoted by $\mathcal{S}'$.

\section{Bessel-Orlicz potential spaces and Orlicz-Sobolev spaces} \label{sec.bessel}

In this section, we introduce the Bessel-Orlicz potential spaces and we state some of their basic properties.

\begin{definition}Given a Young function $A$ and $s \in \R$, we define the Bessel-Orlicz potential space $H^{s, A}(\mathbb{R}^n)$ as
$$H^{s, A}(\mathbb{R}^n):=\left\lbrace u\in L^A(\mathbb{R}^n): u= G_s*f, \text{ for some }f\in L^A(\mathbb{R}^n)\right\rbrace.$$We equip $H^{s, A}(\mathbb{R}^n)$ with the norm
$$\|u\|_{H^{s, A}(\mathbb{R}^n)}:=\|f\|_{L^A(\mathbb{R}^n)}.$$
\end{definition}

The following lemma states that the definition of the space $H^{s, A}(\mathbb{R}^n)$ and its norm make sense. We concentrate on the case $s>0$.
\begin{lemma}Given a Young function $A$ and $s>0$, we have that for any $f\in L^A(\mathbb{R}^n)$, it follows that $G_s*f\in L^A(\mathbb{R}^n)$. Moreover, if $G_s*f_1=G_s*f_2$ for some $f_1, f_2\in L^A(\mathbb{R}^n)$, then $f_1=f_2$ a. e. in $\mathbb{R}^n$.
\end{lemma}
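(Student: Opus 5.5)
The first assertion follows at once from Lemma \ref{lema.1} combined with Lemma \ref{lema.G} i). Since $s>0$, we have $G_s\in L^1(\R^n)$, and hence for $f\in L^A(\R^n)$
$$
\|G_s*f\|_{L^A(\R^n)}\leq \|G_s\|_{L^1(\R^n)}\|f\|_{L^A(\R^n)}<\infty .
$$
The convolution is well defined a.e. by the same lemma, since its modular version gives integrability of $A(|G_s*|f||)$. I will also record that $\hat G_s(\xi)=(1+|\xi|^2)^{-s/2}$ is a bounded continuous function, because $G_s\in L^1$. In fact $\|G_s\|_{L^1}=\hat G_s(0)=1$, as $G_s\ge 0$, though this is not needed.

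For uniqueness, set $f=f_1-f_2\in L^A(\R^n)$. Then $G_s*f=0$ a.e., and it suffices to show $f=0$ a.e. I would pass to tempered distributions. Since $A$ is superlinear at infinity, every $f\in L^A$ splits as $f=f\chi_{\{|f|>1\}}+f\chi_{\{|f|\le1\}}$. The first piece lies in $L^1$, because $\int_{\{|f|>1\}}|f|\le C\int A(|f|/\lambda)<\infty$ for suitable $\lambda$; this uses $A(t)/t\to\infty$, so that $A(t)\ge t$ for large $t$, after rescaling. The second piece is bounded, hence in $L^\infty$. Thus $f\in L^1+L^\infty\subset\mathcal S'$, and the same holds for $G_s*f$. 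For Schwartz $\varphi$ I use the duality identity $\langle G_s*f,\varphi\rangle=\langle f, G_s*\varphi\rangle$, which holds by Fubini because $G_s\in L^1$ and $f\in L^1+L^\infty$. This gives
$$
0=\langle f, G_s*\varphi\rangle \qquad\text{for all }\varphi\in\mathcal S.
$$

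The key observation is that $\varphi\mapsto G_s*\varphi$ is a bijection of $\mathcal S$ onto itself. On the Fourier side it is multiplication by $(1+|\xi|^2)^{-s/2}$, which is smooth with all derivatives polynomially bounded; its reciprocal $(1+|\xi|^2)^{s/2}$ has the same property. Given any $\psi\in\mathcal S$, set $\varphi=\mathcal F^{-1}\big((1+|\xi|^2)^{s/2}\hat\psi\big)\in\mathcal S$. Then $G_s*\varphi=\psi$, so $\langle f,\psi\rangle=0$ for all $\psi\in\mathcal S$. Hence $f=0$ as a distribution, and since $f\in L^1_{loc}$, $f=0$ a.e.

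The only delicate point is justifying the duality identity, namely that $G_s*\varphi$ computed pointwise agrees with the Fourier-side definition and that Fubini applies. For the $L^1$ part of $f$, $|f|*|G_s|*|\varphi|\in L^1$. For the $L^\infty$ part, $\int\!\!\int |f(y)|G_s(x-y)|\varphi(x)|\,dy\,dx\le\|f\|_\infty\|G_s\|_1\|\varphi\|_1$. Both suffice. An alternative avoiding distributions is to approximate: $f$ is in $L^1_{loc}$, so one tests against $\psi\in C_c^\infty$ and writes $\psi=G_s*\varphi$ with $\varphi\in\mathcal S$ as above, which is the same argument.
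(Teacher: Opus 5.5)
Your proof is correct and follows essentially the same route as the paper. The first claim comes from Young's inequality in Orlicz spaces (Lemma \ref{lema.1}) with $G_s\in L^1(\R^n)$. Uniqueness comes from the Fubini identity $\int (G_s*f_i)\phi\,dx=\int f_i\,(G_s*\phi)\,dx$ together with the fact that convolution with $G_s$ is a bijection of $\mathcal{S}$. Your one addition is an explicit justification of Fubini through $f\in L^1+L^\infty$, a step the paper leaves implicit; Orlicz H\"older with $G_s*|\phi|\in L^{\hat A}(\R^n)$ would serve equally well.
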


\begin{proof}
From the Young's inequality in Orlicz spaces Lemma \ref{lema.1} it follows that
\begin{equation}\label{young}
\|G_s*f\|_{L^A(\mathbb{R}^n)}\leq \|G_s\|_{L^1(\mathbb{R}^n)}\|f\|_{L^A(\mathbb{R}^n)}
\end{equation}and so $G_s*f$ is well-defined and belongs to $L^A(\mathbb{R}^n)$.

For the second part, let $\phi \in \mathcal{S}$. Then, by Fubini's theorem
\begin{equation}
\begin{split}
\int_{\mathbb{R}^n}(G_s*f_i)(x)\phi(x)\,dx = \int_{\mathbb{R}^n}\int_{\mathbb{R}^n}G_s(x-y)f_i(y)\phi(x)\,dy\,dx = \int_{\mathbb{R}^n}f_i(y)(G_s*\phi)(y)\,dy,
\end{split}
\end{equation}for $i=1, 2$. Hence,
$$ 
\int_{\mathbb{R}^n}f_1(y)(G_s*\phi)(y)\,dy = \int_{\mathbb{R}^n}f_2(y)(G_s*\phi)(y)\,dy.
$$
Since $G_s:\mathcal{S}\to \mathcal{S}$ is a bijection, by letting $\varphi= G_s*\phi$ and since $\phi$ is arbitrary, we get
$$
\int_{\mathbb{R}^n}(f_1-f_2)(x)\varphi(x)\,dx=0
$$
for all $\varphi\in \mathcal{S}$. Thus,  $f_1=f_2$ a. e. in $\mathbb{R}^n$.
\end{proof}

\begin{corollary}If $0<s_1<s_2$, then
$$H^{s_2, A}(\mathbb{R}^n)\subset H^{s_1, A}(\mathbb{R}^n) \quad\text{ and }\quad\|u\|_{H^{s_1, A}(\mathbb{R}^n)}\leq \|u\|_{H^{s_2, A}(\mathbb{R}^n)}.$$
\end{corollary}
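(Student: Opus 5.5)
The approach is the semigroup property of Bessel kernels, $G_{s_1}*G_{s_2-s_1}=G_{s_2}$, combined with the Orlicz Young inequality of Lemma \ref{lema.1}.

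Let $u\in H^{s_2,A}(\mathbb{R}^n)$, so $u=G_{s_2}*f$ for some $f\in L^A(\mathbb{R}^n)$ with $\|u\|_{H^{s_2,A}(\mathbb{R}^n)}=\|f\|_{L^A(\mathbb{R}^n)}$. Put $t=s_2-s_1>0$ and $g:=G_{t}*f$. By Lemma \ref{lema.1} (equivalently, \eqref{young}) we have $g\in L^A(\mathbb{R}^n)$ and
$$
\|g\|_{L^A(\mathbb{R}^n)}\le \|G_t\|_{L^1(\mathbb{R}^n)}\|f\|_{L^A(\mathbb{R}^n)}.
$$

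Next we show $G_{s_1}*g=u$. On the Fourier side, $\hat G_{s_1}\hat G_{t}=(1+|\xi|^2)^{-s_1/2}(1+|\xi|^2)^{-t/2}=\hat G_{s_2}$. Since $G_{s_1},G_t\in L^1$ by Lemma \ref{lema.G}, their convolution is in $L^1$ and has this Fourier transform, so $G_{s_1}*G_t=G_{s_2}$ a.e. The triple integral $\iint G_{s_1}(x-y)G_t(y-z)|f(z)|\,dz\,dy$ is finite for a.e. $x$, because $|f|\in L^A$ and $G_{s_2}*|f|\in L^A$ is finite a.e. Fubini then justifies associativity:
$$
G_{s_1}*(G_t*f)=(G_{s_1}*G_t)*f=G_{s_2}*f=u.
$$
Hence $u\in H^{s_1,A}(\mathbb{R}^n)$, and by the uniqueness lemma its density is $g$, so $\|u\|_{H^{s_1,A}}=\|g\|_{L^A}$.

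For the norm inequality we need $\|G_t\|_{L^1}\le 1$. The kernel $G_t$ is nonnegative for $t>0$, via the subordination formula $G_t(x)=c_t\int_0^\infty e^{-\pi|x|^2/\delta}e^{-\delta/4\pi}\delta^{(t-n)/2}\,\frac{d\delta}{\delta}$ with $c_t>0$. Therefore $\|G_t\|_{L^1}=\int G_t=\hat G_t(0)=1$, which gives $\|u\|_{H^{s_1,A}}\le\|f\|_{L^A}=\|u\|_{H^{s_2,A}}$.

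The main obstacle is the constant-one bound, which rests on the positivity of $G_t$. The paper's Lemma \ref{lema.G} does not state positivity explicitly, so I would cite the standard subordination representation (Stein's book) for it. The Fubini justification is routine by comparison.
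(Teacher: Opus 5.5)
Your proof is correct and follows the paper's argument: factor $G_{s_2}=G_{s_1}*G_{s_2-s_1}$, apply the Orlicz Young inequality (Lemma \ref{lema.1}) to $G_{s_2-s_1}*f$, and use $\|G_{s_2-s_1}\|_{L^1(\mathbb{R}^n)}=1$. The paper asserts the associativity and the unit $L^1$ norm without justification, so your Fubini argument and the positivity of $G_t$ via the subordination formula supply details the paper leaves implicit.
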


\begin{proof}
Let $u\in H^{s_2, A}(\mathbb{R}^n)$ and write $u=G_{s_2}*f$, for some $f\in L^A(\mathbb{R}^n)$. Let $\varepsilon=s_2-s_1>0$. Then,
\begin{equation}
u=G_{s_2}*g = (G_{s_1}*G_{\varepsilon})*f = G_{s_1}*(G_\varepsilon*f).
\end{equation}
By Young's inequality \eqref{young}, $G_\varepsilon*f\in L^A(\mathbb{R}^n)$. Hence, $u\in  H^{s_1, A}(\mathbb{R}^n)$. Moreover, by Young's inequality again,
$$\|u\|_{H^{s_1, A}(\mathbb{R}^n)} = \|G_\varepsilon*f\|_{L^A(\mathbb{R}^n)}\leq \|G_\varepsilon\|_{L^1(\mathbb{R}^n)}\|f\|_{L^A(\mathbb{R}^n)}= \|G_\varepsilon\|_{L^1(\mathbb{R}^n)}\|u\|_{H^{s_2, A}(\mathbb{R}^n)}.$$
In view of the fact that $\|G_\varepsilon\|_{L^1(\mathbb{R}^n)}=1$, we conclude the proof.
\end{proof}

\subsection{The local case $s=1$} 
\begin{theorem}\label{sobolev include in Bessel} Let $A$ be a Young function such that $A$ and its conjugate $\hat A$  satisfy the $\Delta_2$-condition. Let $u\in W^{1, A}(\mathbb{R}^n)$. Then $u\in H^{1, A}(\mathbb{R}^n)$, and there is a constant $C>0$ independent of $u$ such that
$$\|u\|_{H^{1, A}(\mathbb{R}^n)}\leq C\|u\|_{W^{1, A}(\mathbb{R}^n)}.$$
\end{theorem}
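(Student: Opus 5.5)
We must show that for $u\in W^{1,A}(\mathbb{R}^n)$ the function $f:=(I-\Delta)^{1/2}u$, defined distributionally by $\hat f=(1+|\xi|^2)^{1/2}\hat u$, lies in $L^A(\mathbb{R}^n)$ with $\|f\|_{L^A}\le C\|u\|_{W^{1,A}}$. Once this is known, $G_1*f=u$ holds (first in $\mathcal{S}'$, then a.e.), and this gives $u\in H^{1,A}(\mathbb{R}^n)$ with the desired bound. The plan is to prove the estimate for $u\in\mathcal{S}$ (or $C_c^\infty$) and then extend by density. Density of smooth compactly supported functions in $W^{1,A}(\mathbb{R}^n)$ holds because $A\in\Delta_2$: mollification and truncation converge in modular, hence in norm.

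For $u\in\mathcal{S}$ I would use the algebraic identity
$$
(1+|\xi|^2)^{1/2}=\frac{1}{(1+|\xi|^2)^{1/2}}+\sum_{j=1}^n \frac{-i\xi_j}{(1+|\xi|^2)^{1/2}}\,(i\xi_j).
$$
It gives
$$
f=G_1*u+\sum_{j=1}^n R_j^{(1)}(\partial_j u),\qquad \widehat{R_j^{(1)} g}(\xi)=\frac{-i\xi_j}{(1+|\xi|^2)^{1/2}}\,\hat g(\xi).
$$
Here $R_j^{(1)}$ is convolution with $\partial_j G_1$ in the principal value sense, up to sign. The first term is handled by Lemma \ref{lema.1} and Lemma \ref{lema.G}: $\|G_1*u\|_{L^A}\le\|G_1\|_{L^1}\|u\|_{L^A}=\|u\|_{L^A}$. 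It therefore suffices to show that each $R_j^{(1)}$ is bounded on $L^A(\mathbb{R}^n)$, which yields $\|f\|_{L^A}\le C(\|u\|_{L^A}+\|\nabla u\|_{L^A})$.

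The main step is the boundedness of $R_j^{(1)}$ on $L^A$. I would first show it is a Calderón--Zygmund operator. Its symbol $m_j(\xi)=-i\xi_j(1+|\xi|^2)^{-1/2}$ is bounded, so the operator is bounded on $L^2$. It also satisfies Mikhlin-type estimates $|\partial^\beta m_j(\xi)|\le C_\beta|\xi|^{-|\beta|}$, so the operator is bounded on $L^p$ for every $1<p<\infty$. Alternatively, one can check directly that the kernel $K_j=\partial_jG_1$ satisfies $|K_j(x)|\le C|x|^{-n}$ and $|\nabla K_j(x)|\le C|x|^{-n-1}$. Near $0$ this comes from $G_1(x)\approx c|x|^{1-n}$ together with its derivative asymptotics, and away from $0$ from exponential decay. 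This gives boundedness on $L^{p^-}$ and on $L^{p^+}$, with $1<p^-\le p^+<\infty$ supplied by \eqref{eq.p}, which is exactly where $\Delta_2$ for both $A$ and $\hat A$ enters.

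To pass to $L^A$, I would use an Orlicz-space interpolation theorem of Marcinkiewicz type (Boyd indices strictly between $p^-$ and $p^+$ in the sense needed, or the modular version of Calderón's interpolation). The target is a modular inequality $\int A(|R_j^{(1)}g|)\le C\int A(|g|)$. Concretely, I would split $g=g\chi_{\{|g|>t\}}+g\chi_{\{|g|\le t\}}$ at each height $t$ and bound the distribution function of $R_j^{(1)}g$ via the weak $(1,1)$ and strong $(p^+,p^+)$ bounds. Integrating against $dA(t)=a(t)\,dt$, the resulting integrals converge precisely because of \eqref{eq.p}. This is the step I expect to be the main obstacle, since a careful estimate of the kernel and of the interpolation constants is required. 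The modular bound applied to $g/\lambda$ gives the norm bound, and density then completes the proof.
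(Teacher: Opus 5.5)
Your proposal follows the paper's proof. Your symbol identity is exactly the inversion formula $G_{-1}u=G_1*u+\sum_i T_iD_iu$ used there. The kernel bounds $|D_iG_1(x)|\le C|x|^{-n}$ and $|\nabla D_iG_1(x)|\le C|x|^{-n-1}$ make $T_i$ a Calder\'on--Zygmund operator, and density of $C_0^\infty$ in $W^{1,A}(\mathbb{R}^n)$ concludes. The only difference is in how $L^A$-boundedness of $T_i$ is obtained: the paper quotes Corollary 5.4.3 of Harjulehto--H\"ast\"o, whereas you prove it by modular interpolation.

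In that step your upper exponent cannot be $p^{+}$. The Chebyshev bound $|\{|T_ig_2|>t\}|\le Ct^{-q}\int_{\{|g|\le t\}}|g|^q\,dx$ uses only weak type at $q$. It produces the factor $\int_{|g|}^\infty a(t)t^{-q}\,dt$, which diverges for $q=p^{+}$, already for $A(t)=t^{p^{+}}$.

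The fix is to take any $q>p^{+}$, which is legitimate since $T_i$ is bounded on every $L^q$, $1<q<\infty$. Then $p^{-}\le ta(t)/A(t)\le p^{+}$ gives
$\int_s^\infty a(t)t^{-q}\,dt\le \frac{p^{+}}{q-p^{+}}A(s)s^{-q}$
and, for the weak $(1,1)$ part,
$\int_0^s a(t)t^{-1}\,dt\le \frac{p^{+}}{p^{-}-1}\frac{A(s)}{s}$.
Hence $\int A(|T_ig|/2)\,dx\le C\int A(|g|)\,dx$, and the norm bound follows by convexity as you say.
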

\begin{proof}
Let $u\in W^{1, A}(\mathbb{R}^n)$. Then, by \cite[Corollary 1.10]{G}, there is a sequence $u_k\in C_0^\infty(\mathbb{R}^n) \subset \mathcal{S}$ such that $u_k\to u$ in $W^{1, A}(\mathbb{R}^n)$. Moreover, by the inversion formula \cite[Eq. (5.30)]{AS}, and letting $G_{-1}$ the inverse Bessel operator of $G_1$, we get
\begin{equation}\label{inversion u n}
G_{-1}u_k=G_1*u_k + \sum_{i=1}^n T_iD_iu_k,
\end{equation}where for each $i$, the operator $T_i$ is given by a singular integral (see \cite[Eq. (5.7)]{AS})
\begin{equation}\label{operator T}
T_i\varphi(x):=\lim_{\varepsilon\to 0^+}\int_{|x-y|>\varepsilon}D_iG_1(x-y)\varphi(y)\,dy, \quad \varphi\in \mathcal{S}.
\end{equation}We will extend the formula \eqref{inversion u n} to $u\in W^{1, A}(\mathbb{R}^n)$. 
Now, since $G_1\in L^1(\mathbb{R}^n)$ and $u_k\to u$ in $W^{1, A}(\mathbb{R}^n)$, we get by Young's inequality Lemma \ref{young} that $G_1*(u_k-u) \in L^A(\mathbb{R}^n)$ and 
\begin{equation}\label{eq1}
 \|G_1*(u_k-u)\|_{L^A(\mathbb{R}^n)}\to 0 \quad \text{as }n\to \infty.
\end{equation}

Regarding the operator $T_i$, we recall that $D_iG_1$ is not in $L^1(\mathbb{R}^n)$ but it is an analytic function outside the origin  decreasing exponentially near infinite.
However, we have by (9.2) in \cite{AS} that the following estimates hold
$$|D_iG_i(x-y)|\leq C|x-y|^{-n} \quad \text{and}\quad|D_{ij}G_i(x-y)|\leq C|x-y|^{-n-1}, \quad \text{for all }i, j=1, ..., n.$$Hence, by \cite[Eq. (5.7)]{AS}, \cite[Eq. (5.6)]{AS} which says that the Fourier transform of $D_iG_1$ is bounded, \cite[Prop. 5.2]{D} which gives the Hormander's condition, and the Calder\'on-Zygmund Theorem  \cite[Theorem 5.1]{D}, the operator $T_i$ is bounded from $L^p(\R^n)$ to $L^p(\R^n)$ for any $p>1$. In particular, it is bounded in $L^2(\R^n)$. Thus, $T_i$ is a Calder\'on-Zygmund singular integral operator (see for instance \cite[page 115]{HH}). Thus, by Corollary 5.4.3 in \cite{HH} and since $A$ and $\hat A$ satisfy the $\Delta_2$-condition, these operators are continuous from $L^A(\mathbb{R}^n)$ to $L^A(\mathbb{R}^n)$ and so  
\begin{equation}\label{eq2}
 \bigg\|\sum_{i=1}^n T_iD_i(u_k-u)\bigg\|_{L^A(\mathbb{R}^n)}\to 0 \quad \text{ as }k\to \infty.
\end{equation}Hence, by \eqref{eq1} and \eqref{eq2}, we obtain  that
\begin{equation}\label{conv1}
 \bigg\|G_1*(u_k-u) + \sum_{i=1}^n T_iD_i(u_k-u)\bigg\|_{L^A(\mathbb{R}^n)}\to 0,
\end{equation}as $k\to \infty$. Consequently, by \eqref{inversion u n}, 
\begin{equation}
u_k= G_1*(G_{-1}u_k) = G_1*\left(G_1*(u_k-u)+\sum_{i=1}^nT_iD_i(u_k-u)\right) + G_1*\left(G_1*u+\sum_{i=1}^nT_iD_iu\right).
\end{equation}Taking the limit as $k\to \infty$ in $L^A(\mathbb{R}^n)$ in the previous equality, recalling that $u_k\to u$ in $L^A(\mathbb{R}^n)$ and \eqref{conv1} together with Young's inequality, we get
$$u= G_1*\left(G_1*u+\sum_{i=1}^nT_iD_iu\right) \quad \text{a. e. in }\mathbb{R}^n,$$and hence 
$$G_{-1}u = G_1*u+\sum_{i=1}^nT_iD_iu.$$Let $f=G_{-1}u$.  Then, by Young's inequality and the continuity of the singular operators $T_i$ in $L^A(\mathbb{R}^n)$, we get $f\in L^A(\mathbb{R}^n)$ and so $u\in H^{1, A}(\mathbb{R}^n)$. Moreover,
$$\|u\|_{H^{1, A}(\mathbb{R}^n)} = \|f\|_{L^A(\mathbb{R}^n)}\leq \|G_1\|_{L^1(\mathbb{R^n})}\|u\|_{L^A(\mathbb{R}^n)}+C\sum_{i=1}^n\|D_iu\|_{L^A(\mathbb{R}^n)}=C\|u\|_{W^{1, A}(\mathbb{R}^n)}.$$This ends the proof. 
\end{proof}

We now prove the converse of Theorem \ref{sobolev include in Bessel}.
\begin{theorem}\label{Bessel include in Sobolev}
Let $A$ be a Young function such that $A$ and its conjugate $\hat A$  satisfy the $\Delta_2$-condition. Let $u\in H^{1, A}(\mathbb{R}^n)$. Then $u\in W^{1, A}(\mathbb{R}^n)$, and there is a constant $C>0$ independent of $u$ such that
$$\|u\|_{W^{1, A}(\mathbb{R}^n)}\leq C\|u\|_{H^{1, A}(\mathbb{R}^n)}.$$
\end{theorem}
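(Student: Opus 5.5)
Write $u=G_1*f$ with $f\in L^A(\mathbb{R}^n)$ and $\|u\|_{H^{1,A}(\mathbb{R}^n)}=\|f\|_{L^A(\mathbb{R}^n)}$. The $L^A$ part of the $W^{1,A}$ norm is immediate from Young's inequality \eqref{young}: $\|u\|_{L^A(\mathbb{R}^n)}\le \|G_1\|_{L^1(\mathbb{R}^n)}\|f\|_{L^A(\mathbb{R}^n)}=\|f\|_{L^A(\mathbb{R}^n)}$. So the whole task is to show that the weak derivatives $D_iu$ exist, belong to $L^A(\mathbb{R}^n)$, and satisfy $\|D_iu\|_{L^A}\le C\|f\|_{L^A}$.

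The natural candidate is $D_iu=T_if$, where $T_i$ is the singular integral \eqref{operator T} with kernel $D_iG_1$. In the proof of Theorem \ref{sobolev include in Bessel} we saw that $T_i$ is a Calder\'on--Zygmund operator. Since $A$ and $\hat A$ satisfy $\Delta_2$, it is therefore bounded on $L^A(\mathbb{R}^n)$ by \cite[Corollary 5.4.3]{HH}. Thus, once the identity $D_i(G_1*f)=T_if$ is established, we get $\|D_iu\|_{L^A}\le C\|f\|_{L^A}$. Summing over $i$ and adding the $L^A$ bound gives $\|u\|_{W^{1,A}}\le C\|u\|_{H^{1,A}}$.

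To prove the identity I will argue by density. I first need $\mathcal{S}$ (or $C_0^\infty$) to be dense in $L^A(\mathbb{R}^n)$, which holds because $A\in\Delta_2$. Take $f_k\in\mathcal{S}$ with $f_k\to f$ in $L^A$. For Schwartz $f_k$, the function $u_k=G_1*f_k$ is smooth, and its classical derivative satisfies $D_iu_k=T_if_k$. This is the standard identity \cite[Eq. (5.7)]{AS}: on the Fourier side, $\widehat{D_iu_k}=i\xi_i(1+|\xi|^2)^{-1/2}\hat f_k$, and this multiplier is exactly the (bounded) symbol of $T_i$ from \cite[Eq. (5.6)]{AS}. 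Then:
\begin{itemize}
\item by Young's inequality, $u_k\to u$ in $L^A$;
\item by the $L^A$-boundedness of $T_i$, $T_if_k\to T_if$ in $L^A$.
\end{itemize}
For any $\varphi\in C_0^\infty(\mathbb{R}^n)$ we have $\int u_kD_i\varphi=-\int T_if_k\,\varphi$. We pass to the limit using the H\"older inequality in Orlicz spaces, with $\varphi,D_i\varphi\in L^{\hat A}$. This yields $\int uD_i\varphi=-\int T_if\,\varphi$, so $D_iu=T_if\in L^A$ weakly.

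The main obstacle is justifying the identity $D_i(G_1*f)=T_if$ carefully. The difficulty is that $D_iG_1\notin L^1$, so one cannot simply differentiate under the convolution; the identity really has to go through the principal value and the Fourier multiplier representation. I would rely on \cite[Eqs. (5.6)--(5.7)]{AS} for Schwartz functions, so that the only new ingredient in the Orlicz setting is the $L^A$-boundedness of $T_i$. That ingredient was already secured in the proof of Theorem \ref{sobolev include in Bessel}.
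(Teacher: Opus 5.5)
Your proposal is correct and follows essentially the same route as the paper. Both arguments approximate $f$ by smooth $f_k$, establish $\int u_k D_i\varphi=-\int T_if_k\,\varphi$ for $u_k=G_1*f_k$, and pass to the limit using Young's inequality together with the $L^A$-boundedness of the Calder\'on--Zygmund operator $T_i$ from \cite[Corollary 5.4.3]{HH}. The only difference is in the smooth-case identity: the paper verifies it by a direct distributional computation with truncated integrals, using the maximal operator $T_i^*$ to justify dominated convergence, whereas you read it off from the Fourier symbol identification in \cite[Eqs. (5.6)--(5.7)]{AS}, which is equally valid.
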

\begin{proof}
Let $u\in H^{1, A}(\mathbb{R}^n)$. Then, there is $f\in L^A(\mathbb{R}^n)$ such that $u=G_1*f$. We will show that the distributional derivative $D_iu$ exists and belong to $L^A(\mathbb{R}^n)$ for all $i$.

 Let $f_k\in C_0^\infty(\mathbb{R}^n)$ converging to $f$ in $L^A(\mathbb{R}^n)$ and denote $u_k=G_1*f_k$. As $D_iG_1$ is a distribution (see \cite[page 27]{AS}),  the convolution $D_iG_1*f_k$ is well defined and it is given for any $\phi\in C_0^\infty(\mathbb{R}^n)$ by
\begin{equation}\label{derivative u 1}
\begin{split}
(D_iG_1*f_k)(\phi)&= D_iG_1(f_{k, R}*\phi) \qquad (f_{k, R}(y):=f_k(-y))\\& = -\int_{\mathbb{R}^n}G_1(x)D_i(f_{k, R}*\phi)(x)\,dx \\ &= -\int_{\mathbb{R}^n}\int_{\mathbb{R}^n}G_1(x)f_k(-y)D_i\phi(x-y)\,dx\,dy\\& =-\int_{\mathbb{R}^n}\int_{\mathbb{R}^n}G_1(x)f_k(y)D_i\phi(x+y)\,dx\,dy \\& =  -\int_{\mathbb{R}^n}\int_{\mathbb{R}^n}G_1(z-y)f_k(y)D_i\phi(z)\,dz\,dy\\& = -\int_{\mathbb{R}^n}(G_1*f_k)(z)D_i\phi(z)\,dz =-\int_{\mathbb{R}^n}u_k(z)D_i\phi(z)\,dz.
\end{split}
\end{equation}On the other hand, the distribution $D_iG_1$ can be seen as a singular integral (see (5.7) in \cite{AS}). So,
\begin{equation}\label{derivative u 2}
\begin{split}
(D_iG_1*f_k)(\phi)&=  D_iG_1(f_{k, R}*\phi) \\& = \lim_{\varepsilon\to 0^+}\int_{|x|>\varepsilon}D_iG_1(x)(f_{k, R}*\phi)(x)\,dx \\& = \lim_{\varepsilon\to 0^+}\int_{|x|>\varepsilon}\int_{\mathbb{R}^n}D_iG_1(x)f_k(y)\phi(x+y)\,dy\,dx\\&=\lim_{\varepsilon\to 0^+}\int_{\mathbb{R}^n}\int_{|z-y|>\varepsilon}D_iG_1(z-y)f_k(y)\phi(z)\,dy\,dz\\& = \lim_{\varepsilon\to 0^+}\int_{K}T^i_{\varepsilon}f_k(z)\phi(z)\,dz,
\end{split}
\end{equation}where $K=supp(\phi)$ and
$$T^i_{\varepsilon}f_k(z):= \int_{|z-y|>\varepsilon}D_iG_1(z-y)f_k(y)\,dy.$$By \cite[Theorem 5.1]{D} and \cite[Eq. (5.7)]{AS}, the operator
$$T_i\varphi(x):=\lim_{\varepsilon\to 0^+}\int_{|x-y|>\varepsilon}D_iG_1(x-y)\varphi(y)\,dy$$is bounded from $L^p(\R^n)$ to $L^p(\R^n)$ for any $p>1.$ Hence, by \cite[Theorem 5.14]{D}, the maximal operator
$$T_i^{*}\varphi(x):=\sup_{\varepsilon>0}|T^i_\varepsilon \varphi(x)|$$is bounded from $L^p(\R^n)$ to $L^p(\R^n)$ for any $p>1$. In particular, for each fixed $k$ we obtain
$$|T^i_{\varepsilon}f_k(z)| \leq T_i^{*}f_k(z) \in L^1(K), \quad K=\text{supp }\phi,$$for any $z\in K$ and any $\varepsilon>0$. Therefore, we may applied Lebesgue dominated convergence Theorem in the last equality of \eqref{derivative u 2} and take the limit $\varepsilon\to 0^+$ to obtain
\begin{equation*}
(D_iG_1*f_k)(\phi)=\int_{\mathbb{R}^n}T_if_k(z)\phi(z)\,dz.
\end{equation*}Combining this with \eqref{derivative u 1} we get
\begin{equation}
\int_{\mathbb{R}^n}u_k(y)D_i\phi(y)\,dy=-\int_{\mathbb{R}^n}T_if_k(z)\phi(z)\,dz.
\end{equation}Taking the limit as $k\to \infty$ and considering the continuity of the convolution with $G_1$ in $L^A(\mathbb{R}^n)$ (by Young's inequality) and the continuity of the distribution $D_iG_1$ (by Corollary 5.4.3 in \cite{HH}) give
$$D_iu=T_if \in L^A(\mathbb{R}^n)$$and also that there is a constant $C>0$ such that 
$$\|D_iu\|_{L^A(\mathbb{R}^n)}=\|T_if\|_{L^A(\mathbb{R}^n)}\leq C\|f\|_{L^A(\mathbb{R}^n)}.$$This ends the proof of the theorem.
\end{proof}

\begin{remark}
Theorems \ref{Bessel include in Sobolev} and \ref{sobolev include in Bessel} hold for $s=m$, being $m\in \mathbb{N}.$ Indeed, from \cite[Eq. (5.30)]{AS}, the representation formula takes the form
$$G_{-m}u=\sum_{l=0}^{m-1}\begin{pmatrix}
m \\ 
l
\end{pmatrix}\sum_{|j|=l}(D_jG_m)*D_ju + \sum_{|j|=m}T_jD_ju,$$where
$$T_j\varphi(x):=\lim_{\varepsilon\to 0^+}\int_{|x-y|>\varepsilon}D_jG_m(x-y)\varphi(y)\,dy.$$By (9.2) in \cite{AS}, the kernels satisfy the estimates for $|j|=m$,
$$|D_jG_m(x-y)|\leq C|x-y|^{m-n-|j|}=C|x-y|^{-n}$$and  $$|D_{j+1}G_m(x-y)|\leq C|x-y|^{m-n-|j+1|}=C|x-y|^{-n-1},$$so the above arguments apply. 
\end{remark}

\subsection{The Fractional order case}
\begin{theorem} \label{teo.s1}
Let $u\in H^{s',A}(\R^n)$ and $0<s<s'<1$. Then, $u\in W^{s, A}(\R^n)$ and there exists $C>0$ independently of $u$ such that
$$
\|u\|_{W^{s,A}(\R^n)} \leq \frac{C}{s'-s} \|u\|_{H^{s',A}(\R^n)}.
$$
Moreover, if $u=G_{s'}* f$, with $f\in L^A(\R^n)$, then the following modular inequality holds
\begin{align*}
\int_{\R^n} A(|u|)\,dx +&\int_{\R^n}\int_{\R^n}  A\left(\frac{|u(x+h)-u(x)|}{|h|^s}\right)\frac{dxdh}{|h|^n}\leq \\
&\leq  \frac{C_1}{s'-s} \int_{\R^n} A(C_2|f|)\,dx +  \int_{\R^n} A(\|G_{s'}\|_{L^1(\R^n)}|f|)\,dx,
\end{align*}

where $C_1=\frac{2n\omega n}{s}$ and $C_2=2\|G_{s'}\|_{L^1(\R^n)} + \frac{c}{1-s'}$.
\end{theorem}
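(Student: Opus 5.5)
We prove the modular inequality first and then derive the norm inequality from it by scaling. Write $u=G_{s'}*f$ with $f\in L^A(\R^n)$. The term $\int A(|u|)$ follows at once from the modular part of Lemma \ref{lema.1}:
$$\int_{\R^n}A(|u|)\,dx\le\int_{\R^n}A(\|G_{s'}\|_{L^1(\R^n)}|f|)\,dx.$$

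For the Gagliardo term, note that $u(x+h)-u(x)=(\tau_hG_{s'}-G_{s'})*f(x)$, where $\tau_hG(x)=G(x+h)$. Applying Lemma \ref{lema.1} with $v=\tau_hG_{s'}-G_{s'}\in L^1(\R^n)$, for each fixed $h$ we get
$$\int_{\R^n}A\Big(\frac{|u(x+h)-u(x)|}{|h|^s}\Big)dx\le\int_{\R^n}A\Big(\frac{\|\tau_hG_{s'}-G_{s'}\|_{L^1}}{|h|^s}|f(x)|\Big)dx.$$
The $L^1$ modulus of continuity is controlled in two ways. For $|h|\le1$, Lemma \ref{lema.2} gives $\|\tau_hG_{s'}-G_{s'}\|_{L^1}\le \frac{c}{1-s'}|h|^{s'}$. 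For all $h$, the triangle inequality gives $\|\tau_hG_{s'}-G_{s'}\|_{L^1}\le 2\|G_{s'}\|_{L^1}$. Hence
$$\|\tau_hG_{s'}-G_{s'}\|_{L^1}\le C_2\min\{|h|^{s'},1\},\qquad C_2=2\|G_{s'}\|_{L^1}+\tfrac{c}{1-s'}.$$

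Next we integrate in $h$ against $|h|^{-n}dh$, splitting into $|h|\le1$ and $|h|>1$. On $|h|\le1$ the argument of $A$ is $C_2|h|^{s'-s}|f|$. Since $|h|^{s'-s}\le1$, convexity with $A(0)=0$ gives $A(\lambda t)\le\lambda A(t)$ for $\lambda\le1$. The integral is therefore at most
$$\int_{|h|\le1}|h|^{s'-s-n}dh\int_{\R^n}A(C_2|f|)\,dx=\frac{n\omega_n}{s'-s}\int_{\R^n}A(C_2|f|)\,dx.$$
On $|h|>1$ the argument is $C_2|h|^{-s}|f|$, and the same convexity bound gives
$$\int_{|h|>1}|h|^{-s-n}dh\int_{\R^n}A(C_2|f|)\,dx=\frac{n\omega_n}{s}\int_{\R^n}A(C_2|f|)\,dx.$$
Since $s'-s<1$, we have $\frac1s\le\frac{1}{s(s'-s)}$, so the two pieces sum to at most $\frac{2n\omega_n}{s(s'-s)}\int A(C_2|f|)$. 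This matches $C_1/(s'-s)$ with $C_1=\frac{2n\omega_n}{s}$. Fubini/Tonelli justifies the exchange of integrals, since everything is nonnegative and measurable. This step also uses no $\Delta_2$ condition, consistent with the statement.

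For the norm inequality, apply the modular estimates to $u/\lambda=G_{s'}*(f/\lambda)$ with $\lambda=K\|f\|_{L^A}$. For $\|u\|_{L^A}$ we use \eqref{young} directly, together with $\|G_{s'}\|_{L^1}=1$. For the seminorm we use that $A(C_2|f|/\lambda)\le \frac{C_2}{K}A(|f|/\|f\|_{L^A})$ once $C_2/K\le1$, again by convexity. This gives
$$\Phi_{s,A}(u/\lambda)\le\frac{C_1C_2}{K(s'-s)}\int_{\R^n} A\Big(\frac{|f|}{\|f\|_{L^A}}\Big)dx\le\frac{C_1C_2}{K(s'-s)}.$$
Choosing $K=\max\{C_2,\,C_1C_2/(s'-s)\}$ makes the right-hand side at most $1$, so $[u]_{W^{s,A}}\le K\|f\|_{L^A}$. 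Absorbing constants gives $\|u\|_{W^{s,A}}\le \frac{C}{s'-s}\|u\|_{H^{s',A}}$.

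The main obstacle is the bookkeeping of the constant $C_2$ without homogeneity. We need the factor $\min\{|h|^{s'},1\}|h|^{-s}$ pulled out of $A$ using only convexity, so it must stay $\le1$; this is why $C_2$ absorbs both the Lemma \ref{lema.2} constant and $2\|G_{s'}\|_{L^1}$. If the constant in Lemma \ref{lema.2} is only valid for small $|h|$, the splitting at $|h|=1$ handles it as above.
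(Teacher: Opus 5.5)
Your proposal is correct and follows essentially the same route as the paper. Both arguments apply Lemma \ref{lema.1} to the difference kernel $G_{s'}(\cdot+h)-G_{s'}$ for fixed $h$, split at $|h|=1$ (Lemma \ref{lema.2} for small $h$, the bound $2\|G_{s'}\|_{L^1(\R^n)}$ for large $h$), use convexity to pull out $|h|^{s'-s}$ and $|h|^{-s}$, and finish with a Luxemburg rescaling by a multiple of $C_2\|f\|_{L^A(\R^n)}$. Your single bound $C_2\min\{|h|^{s'},1\}$ and the explicit check $\frac1s+\frac1{s'-s}\le\frac{2}{s(s'-s)}$ simply make rigorous the step where the paper merges its two separate estimates into $\frac{C_1}{s'-s}\int_{\R^n}A(C_2|f|)\,dx$.
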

\begin{proof}
Let $u\in H^{s',A}(\R^n)$ and assume that $u=G_{s'}* f$ for some  $f\in L^A(\R^n)$. Since we can write for $h\neq 0$,
\begin{align*}
\frac{u(x+h)-u(x)}{|h|^s} = \int_{\R^n} (G_{s'}(y+h)-G_{s'}(y))f(x-y)|h|^{-s}\,dy,
\end{align*}
by using Lemma \ref{lema.1} we get that
\begin{align*}
\int_{\R^n} A\left(\frac{|u(x+h)-u(x)|}{|h|^s}\right)\,dx &\leq \int_{\R^n} A\left(\int_{\R^n} |G_{s'}(y+h)-G_{s'}(y))f(x-y)||h|^{-s}\,dy \right)\,dx\\
&\leq \int_{\R^n} A\left( \int_{\R^n}|G_{s'}(y+h)-G_{s'}(y)|\,dy \cdot |f(x)| |h|^{-s}\right)\,dx\\
&= \int_{\R^n} A\left( w_{s'}(h) |f(x)| |h|^{-s}\right)dx,
\end{align*}
where we have denoted $w_{s'}(h):=\int_{\R^n}|G_{s'}(y+h)-G_{s'}(y)|\,dy $. Multiplying by $|h|^{-n}$ and integrating with respect to $h$ gives that
\begin{align} \label{ec.1}
\int_{\R^n}\int_{\R^n} A\left(\frac{|u(x+h)-u(x)|}{|h|^s}\right)\frac{dxdh}{|h|^n}
&\leq \int_{\R^n} \int_{\R^n} A\left( |w_{s'}(h)| |f(x)| |h|^{-s}\right)\frac{dxdh}{|h|^n}.
\end{align}
We will estimate the following integral:
\begin{align*}
\int_{\R^n} A\left( |w_{s'}(h)| |f(x)| |h|^{-s}\right)|h|^{-n}dh = \left(\int_{|h|\leq 1} + \int_{|h|>1} \right) A\left( |w_{s'}(h)| |f(x)| |h|^{-s}\right)|h|^{-n}dh:=I_1+ I_2.
\end{align*}

\underline{Case $|h|> 1$}. Using Lemma \ref{lema.G} we have that $|w_{s'}(h)|\leq 2\|G_{s'}\|_{L^1(\R^n)}$, so,
\begin{align*}
I_2&= \int_{|h|>1} A\left( |w_{s'}(h)| |f(x)| |h|^{-s}\right)|h|^{-n}dh\\
&\leq 
\int_{|h|>1} A\left( 2\|G_{s'}\|_{L^1(\R^n)}|f(x)| |h|^{-s}\right)|h|^{-n}dh \\
&=
\int_1^\infty  A\left( 2\|G_{s'}\|_{L^1(\R^n)} |f(x)| r^{-s}\right)r^{-1}dh\\
&\leq 
A\left(2\|G_{s'}\|_{L^1(\R^n)} |f(x)| \right) n\omega_n\int_1^\infty  r^{-1-s}dh=  \frac{n\omega_n}{s} A\left(2\|G_{s'}\|_{L^1(\R^n)} |f(x)| \right),
\end{align*}
since $r^{-s}<1$ and $A$ is convex. 

\underline{Case $|h|\leq 1$}. By Lemma \ref{lema.2}, $|w_{s'}(h)|\leq \frac{C}{1-s'}|h|^{s'}$, so, using the convexity of $A$ gives that
\begin{align*}
I_1&\leq \int_{|h|\leq 1} A\left( \tfrac{C}{1-s}|h|^{s'} |f(x)| |h|^{-s}\right)|h|^{-n}dh\\
&\leq A(\tfrac{C}{1-s'}|f(x)|) \int_{0}^1 |h|^{-n+s'-s}dh\leq \frac{n\omega_n}{s'-s}A(\tfrac{C}{1-s'}|f(x)|) 
\end{align*}
Replacing the bound for $I_1$ and $I_2$ into \eqref{ec.1} gives that
$$
\int_{\R^n}\int_{\R^n} A\left(\frac{|u(x+h)-u(x)|}{|h|^s}\right)\frac{dxdh}{|h|^n}
\leq  \frac{C_1}{s'-s} \int_{\R^n} A(C_2 |f(x)|)\,dx
$$
where $C_1=\frac{2n\omega n}{s}$ and $C_2=2\|G_{s'}\|_{L^1(\R^n)} + \frac{c}{1-s'}$.

Finally, let $\lambda := C C_2\|f\|_{L^A(\R^n)}$ for some $C>1$ such that $C>\frac{C_1}{s'-s}$. Observe that a similar computation gives that
$$
\int_{\R^n}\int_{\R^n} A\left(\frac{|u(x+h)-u(x)|}{\lambda |h|^s}\right)\frac{dxdh}{|h|^n}
\leq  \frac{C_1}{s'-s} \int_{\R^n} A\left(\frac{C_2|f(x)|}{\lambda}\right)\,dx,
$$
but, since $C>1$
$$
\frac{C_1}{s'-s} \int_{\R^n} A\left(\frac{C_2|f(x)|}{\lambda}\right)\,dx 
\leq 
\frac{C_1}{s'-s} \frac{1}{C}\int_{\R^n} A\left(\frac{|f(x)|}{ \|f\|_{L^A(\R^n)}}\right)\,dx\leq 1
$$
by definition of the Luxemburg norm. Then, using again the definition of the norm
$$
[u]_{W^{s,A}(\R^n)} \leq CC_2 \|f\|_{L^A(\R^n)}= CC_2 \|u\|_{H^{s,A}(\R^n)}.
$$

Finally, observe that by Lemma \ref{lema.1}
$$
\|u\|_{L^A(\R^n)}=\|G_s* f\|_{L^A(\R^n)}\leq  \|G_s\|_{L^1(\R^n)}\|f\|_{L^A(\R^n)} = \|G_s\|_{L^1(\R^n)}\|u\|_{H^{s,A}(\R^n)}
$$
and
$$
\int_{\R^n} A(|G_s*f|)\,dx \leq \int_{\R^n} A(\|G_s\|_{L^1(\R^n)}|f|)\,dx.
$$
This concludes the proof.
\end{proof}

\medskip

To prove the reverse inequality we need the following continuity result.
\begin{proposition} \label{prop.cont}
Let $\alpha>\gamma>0$,  $\gamma < 1$, and define $d\mu=\frac{dxdt}{|t|^n}$ and the kernel
$$
K(z,x,t):=\frac{G_\alpha(z-x)-G_\alpha(z-x+t)}{|t|^\gamma},
$$
then there exists $C>0$ such that
$$
\int_{\R^n} \iint_{\R^n \times \R^n} |K(z,x,t) v(x,t) w(z)|\,d\mu \,dz  \leq C \|v\|_{L^A( d\mu)} \|w\|_{L^{\hat A}(\R^n)}
$$
holds for any $v\in L^A( d\mu)$ and $w\in L^{\hat A}(\R^n)$.
\end{proposition}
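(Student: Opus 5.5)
The proof rests on two $L^1$ integrability facts about the kernel, one in each variable, and then uses Young's inequality \eqref{2.5} in a weighted form, in the spirit of a Schur test. We may assume $\|v\|_{L^A(d\mu)}=\|w\|_{L^{\hat A}(\R^n)}=1$ by homogeneity; the general case follows by replacing $v,w$ with $v/\|v\|$, $w/\|w\|$. Then $\iint A(|v|)\,d\mu\le 1$ and $\int \hat A(|w|)\,dz\le 1$.

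First we record the kernel bounds. Set $M(x,t):=\int_{\R^n}|K(z,x,t)|\,dz$ and $N(z):=\iint |K(z,x,t)|\,d\mu$. By translation invariance in $z$,
$$M(x,t)=|t|^{-\gamma}w_\alpha(t),\qquad w_\alpha(t)=\int_{\R^n}|G_\alpha(y)-G_\alpha(y+t)|\,dy.$$
By Lemma \ref{lema.2} we have $w_\alpha(t)\le C|t|^{\min(\alpha,1)-\epsilon}$ for small $|t|$. For $\alpha\ge 1$ we use $G_\alpha=G_{\alpha'}*G_{\alpha-\alpha'}$ with $\gamma<\alpha'<1$, together with Lemma \ref{lema.1}. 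For large $|t|$ we use $w_\alpha\le 2\|G_\alpha\|_{L^1}$. Hence $M$ is bounded, say $M\le C_0$, because $\alpha>\gamma$. In the other variable, translation invariance gives
$$N(z)=\iint|G_\alpha(y)-G_\alpha(y+t)||t|^{-\gamma-n}\,dy\,dt=\int w_\alpha(t)|t|^{-\gamma-n}\,dt,$$
which is finite precisely because $w_\alpha(t)\lesssim|t|^{\gamma+\delta}$ near $0$ and $w_\alpha$ is bounded near infinity, where $|t|^{-\gamma-n}$ is integrable. So $N\le C_1$ is a constant, independent of $z$.

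Next comes the pointwise splitting. The bound \eqref{2.5} gives $|v(x,t)||w(z)|\le A(|v(x,t)|)+\hat A(|w(z)|)$. Multiplying by $|K|$ and integrating,
$$\int\!\!\iint |K v w|\,d\mu\,dz\le \iint A(|v|)\,M(x,t)\,d\mu+\int \hat A(|w|)\,N(z)\,dz\le C_0+C_1.$$
Undoing the normalization yields the claimed inequality with $C=C_0+C_1$. This needs no $\Delta_2$ assumption, consistent with how the result is used later.

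The main obstacle is the first step: making sure both marginals $M$ and $N$ are genuinely bounded. The bound on $N$ needs the modulus of continuity $w_\alpha(t)=O(|t|^{\gamma+\delta})$ near $0$, and Lemma \ref{lema.2} is only stated for $s\in(0,1)$. For $\alpha\ge1$ I would reduce to that range via the semigroup identity $G_\alpha=G_{\alpha'}*G_{\alpha-\alpha'}$, or alternatively use $\nabla G_\alpha\in L^1$ for $\alpha>1$ from Lemma \ref{lema.G}, which gives $w_\alpha(t)\le|t|\,\|\nabla G_\alpha\|_{L^1}$. One must also track the constant $C/(1-\alpha')$ in the small-$|t|$ region so that the integral converges. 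The rest is bookkeeping with Fubini.
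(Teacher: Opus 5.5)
Your proof is correct, and it takes a genuinely different route from the paper's.

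The paper argues by duality. It writes the adjoint $T^*w(x,t)=\bigl(J_\alpha w(x)-J_\alpha w(x-t)\bigr)/|t|^\gamma$ with $J_\alpha w=G_\alpha*w$, proves $\|T^*w\|_{L^{\hat A}(d\mu)}\le C\|w\|_{L^{\hat A}(\R^n)}$ by Luxemburg-modular estimates, and finishes with H\"older's inequality. The modular estimates split into $|t|\ge 1$ (triangle inequality and $\|G_\alpha\|_{L^1}=1$) and $|t|<1$, the latter in three subcases:
\begin{itemize}
\item $\alpha>1$: fundamental theorem of calculus, Jensen, and $\nabla G_\alpha\in L^1$;
\item $\alpha<1$: Lemma \ref{lema.2} plus the modular form of Lemma \ref{lema.1};
\item $\alpha=1$: boundedness of $w\mapsto(\nabla G_1)*w$ on $L^{\hat A}$ as a Calder\'on--Zygmund operator, which requires $\Delta_2$ for both $A$ and $\hat A$.
\end{itemize}

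You instead run a Schur test. After normalizing, the pointwise inequality \eqref{2.5} reduces everything to the two $L^1$-marginals of $|K|$, both explicit in the single quantity $w_\alpha(t)$. You then handle all $\alpha\ge 1$ at once via $G_\alpha=G_{\alpha'}*G_{\alpha-\alpha'}$, which gives $w_\alpha\le w_{\alpha'}$ with $\gamma<\alpha'<1$.

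Your route buys generality and economy. It proves the inequality exactly as stated, with the absolute value inside the triple integral, for every $\alpha>\gamma$ including $\alpha=1$. It also needs no $\Delta_2$ assumption, which the statement does not make. The paper's $\alpha=1$ subcase both needs $\Delta_2$ and, being a singular-integral bound, controls $\int K w\,dz$ rather than $\int |K||w|\,dz$.

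The paper's route yields directly a Luxemburg-norm bound for $T^*$ into $L^{\hat A}(d\mu)$, i.e.\ a fractional-regularity statement for $J_\alpha w$. This can also be recovered from your bilinear bound, since the Luxemburg norm of $\int |K(z,\cdot,\cdot)||w(z)|\,dz$ is dominated by its supremum against $v$ with $\iint A(|v|)\,d\mu\le 1$.

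One small point of bookkeeping: in the semigroup step you want the plain inequality $\|f*g\|_{L^1}\le\|f\|_{L^1}\|g\|_{L^1}$, not Lemma \ref{lema.1}. That lemma's Young functions are required to be superlinear, so $L^1$ is not covered by it.
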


\begin{proof}
We consider the linear operator $T$ defined on $L^A(d\mu)$ as 
$$
(Tv)(z) = \iint_{\mathbb{R}^n \times \mathbb{R}^n} K(z,x,t) v(x,t) \,d\mu.
$$
We will prove that the image of $T$ is contained in $L^A(\R^n)$ and   that $T$ is bounded from $L^A(d\mu)$ to $L^A(\mathbb{R}^n)$, that is, for any $w \in L^{\hat{A}}(\mathbb{R}^n)$
$$
I=\left|\int_{\R^n} \iint_{\R^n \times \R^n} K(z,x,t) v(x,t) w(z)\,d\mu \,dz \right| = \left| \int_{\mathbb{R}^n} (Tv)(z) w(z) \, dz \right| \leq C \|v\|_{L^A(d\mu)} \|w\|_{L^{\hat{A}}(\mathbb{R}^n)}.
$$
For this purpose, we will consider a duality approach: it is well-known that the linear operator $T\colon L^A(d\mu)\to L^A(\R^n)$ is bounded if and only if its adjoint operator $T^*\colon L^{\hat A}(\R^n)\to L^{\hat A}(d\mu)$ is bounded. The operator $T^*$ acting on $w(z)$ is defined as
$$
(T^*w)(x,t) = \frac{1}{|t|^\gamma} \int_{\mathbb{R}^n} [G_\alpha(z-x) - G_\alpha(z-x+t)] w(z) \, dz.
$$
By Lemma \ref{lema.G}, $G_\alpha$ is symmetric. Denoting $J_\alpha w(x):= (G_\alpha*w)(x)$,  we  can write
\begin{align} \label{eqt.star}
\begin{split}
(T^*w)(x,t) &= \frac{1}{|t|^\gamma} \int_{\mathbb{R}^n} [G_\alpha(\tau) - G_\alpha(\tau+t)] w(\tau+x) \, d\tau\\ 
&= \frac{(G_\alpha* w)(x) - (G_\alpha*w)(x-t)}{|t|^\gamma}=\frac{J_\alpha w(x)- J_\alpha w(x-t)}{|t|^\gamma}.
\end{split}
\end{align}
By definition of $T$ and $T^*$ and H\"older's inequality for Orlicz spaces we get formally
\begin{align*}
I &= \left|\int_{\R^n} (Tv)(z) w(z)\,dz\right| = \left|\langle Tv,w\rangle\right| = \left|\langle v,T^* w\rangle_{d\mu}\right|\\ 
&= \left|\iint_{\R^n\times \R^n} v(x,t)(T^*w)(x,t)\,d\mu \right| \leq C \|v\|_{L^A(d\mu)} \|T^*w\|_{L^{\hat A}(d\mu)}.
\end{align*}
Hence, it only remains to prove that $T^* w \in L^{\hat A}(d\mu)$ and 
\begin{equation} \label{a.probar}
\|T^* w\|_{L^{\hat A}(d\mu)} \leq C \|w\|_{L^{\hat A}(\R^n)}.
\end{equation}
We will find an upper bound of the  Luxemburg norm $\|T^* w\|_{L^{\hat A}(d\mu)}$ by analysing the modular.

\noindent \underline{$\bullet$ Case $|t|<1$ and $\alpha>1$}. In this case, due to Lemma \ref{lema.G}, $\nabla G_\alpha\in L^1(\R^n)$ since $\alpha>1$.   We start  proving that $|\nabla J_\alpha w|\in W_{loc}^{1, 1}(\R^n)$. Since $G_\alpha*w \in L^{\hat A}(\R^n)$, it is locally integrable by Holder's inequality. Next, we show that $\nabla J_\alpha w= \nabla(G_\alpha*w)$ is the regular distribution $(\nabla G_\alpha)*w $ and so it is locally integrable by Holder's and Young's inequalities.  Indeed, if $\varphi\in C_0^{\infty}(\R^n)$, then by the symmetry of $G_\alpha$, Tonelli's Theorem and Holder's inequality together with \eqref{young}, we get
\begin{equation*}
\begin{split}
\int_{\R^n}\int_{\R^n}G_\alpha(x-y)|w(y)||D_i \varphi(x)|\,dx\,dy & = \int_{\R^n}(G_\alpha*|w|)(x)|D_i\varphi(x)|\,dx \\& \leq \|G_\alpha\|_{L^1(\R^n)}\|w\|_{L^{\hat A}(\R^n)}\|D_i\varphi\|_{L^A(\R^n)}.
\end{split}
\end{equation*}Hence, Fubini's Theorem and the fact that $G_\alpha\in W^{1, 1}(\R^n)$ imply for $i=1, \ldots, n$ that
\begin{equation}
\begin{split}
D_i (G_\alpha*w)(\varphi)&=-\int_{\R^n}(G_\alpha*w)(x)D_i\varphi(x)\,dx \\ & = -\int_{\R^n}\left(\int_{\R^n}G_\alpha(x-y)D_i\varphi(x)\,dx\right)w(y)\,dy \\& = \int_{\R^n}\left(\int_{\R^n}D_iG_\alpha(x-y)\varphi(x)\,dx\right)w(y)\,dy \\ & =\int_{\R^n}(D_iG_\alpha* w)(x)\varphi(x)\,dx.
\end{split}
\end{equation}Hence, $\nabla J_\alpha w=(\nabla G_\alpha)*w \in W_{loc}^{1, 1}(\R^n)$. 
\normalcolor
By the Fundamental Theorem of Calculus (see for instance \cite[Theorem 6.9]{LL}) we have that
$$
\frac{|J_\alpha w(x) - J_\alpha w(x-t)|}{|t|^\gamma} \leq \int_0^1 |t|^{1-\gamma}  |\nabla J_\alpha w(x - st)| \, ds.
$$

We denote $\lambda:=C_1\|\nabla J_\alpha w\|_{L^{\hat A}(\R^n)}$, where $C_1>1$ is a constant to be determined. By Jensen's inequality
$$
\hat{A}\left(\frac{|(T^*w)(x,t)|}{\lambda}\right) \leq \hat{A}\left(  \int_0^1 \lambda^{-1}|t|^{1-\gamma} |\nabla J_\alpha w(x - st)| \, ds \right) 
\leq 
\int_0^1 \hat{A}\left( \lambda^{-1}|t|^{1-\gamma} |\nabla J_\alpha w(x - st)| \right) \, ds.
$$
Integrating with respect to $d\mu$, using the translation invariance  of the integral, and the fact that $1>\gamma$ give the following relation
\begin{align*}
\int_{B_1} \int_{\mathbb{R}^n} \hat{A}\left( \frac{|T^*w|}{\lambda}\right) \, dx |t|^{-n} dt 
&\leq 
\int_{B_1} |t|^{-n} \left[ \int_0^1 \left( \int_{\mathbb{R}^n} \hat{A}(\lambda^{-1}|t|^{1-\gamma} |\nabla J_\alpha w(x - st)|) \, dx \right) ds \right] dt\\
&=
\int_{B_1} |t|^{-n} \left[ \int_0^1 \left( \int_{\mathbb{R}^n} \hat{A}(\lambda^{-1}|t|^{1-\gamma} |\nabla J_\alpha w(y)|) \, dy \right) ds \right] dt\\
&=
\int_{B_1} |t|^{-n}   \left( \int_{\mathbb{R}^n} \hat{A}( \lambda^{-1}|t|^{1-\gamma} |\nabla J_\alpha w(y)|) \, dy \right) dt\\
&\leq
\int_{B_1} |t|^{-n+1-\gamma} \,dt  \left( \int_{\mathbb{R}^n} \hat{A}( \lambda^{-1}|\nabla J_\alpha w(y)|) \, dy \right)\\
&\leq
C_1 \left( \int_{\mathbb{R}^n} \hat{A}( \lambda^{-1}|\nabla J_\alpha w(y)|) \, dy \right).
\end{align*}
where we have used that $\gamma<1$ and then
$$
\int_{B_1} |t|^{-n+1-\gamma} \,dt  = n\omega_n \int_0^1 r^{-\gamma}\,dr \leq C_1,
$$
for some constant $C_1\geq 1$. This gives that
\begin{align*}
\int_{B_1} \int_{\mathbb{R}^n} \hat{A}\left( \frac{|T^*w|}{\lambda}\right) \, dx |t|^{-n} dt &\leq  C_1
 \int_{\mathbb{R}^n} \hat{A}\left( \frac{1}{C_1} \frac{|\nabla J_\alpha w(y)|}{\|\nabla J_\alpha w\|_{L^{\hat A}(\R^n)}}\right) \, dy  \\
&\leq  
 \int_{\mathbb{R}^n} \hat{A}\left( \frac{|\nabla J_\alpha w(y)|}{\|\nabla J_\alpha w\|_{L^{\hat A}(\R^n)}}\right) \, dy  =1,
\end{align*}
where we have used the definition of the Luxemburg norm.  Hence,  using Lemma \ref{lema.1}, we get that
\begin{align} \label{cota.t.1}
\begin{split}
\|T^*w\|_{L^{\hat{A}}(d\mu,  \mathbb{R}^n \times B_1  )} &\leq 
\lambda = C_1 \|\nabla J_\alpha w\|_{L^{\hat A}(\R^n)} =
C_1 \|(\nabla G_\alpha)* w\|_{L^{\hat A}(\R^n)}\\ 
&\leq 
C_1 \|\nabla G_\alpha\|_{L^1(\R^n)} \|w\|_{L^{\hat A}(\R^n)}.
\end{split}
\end{align}

\medskip
\noindent \underline{$\bullet$ Case $|t|<1$ and $\alpha<1$}.  Now, since $\alpha <1$, $\nabla G_\alpha\notin L^1(\R^n)$. We proceed by writing:
$$
(T^*w)(x,t) = \frac{1}{|t|^\gamma} \int_{\mathbb{R}^n} [G_\alpha(z-x) - G_\alpha(z-x+t)] w(z) \, dz = K_t* w(x)
$$
where
$$
K_t(x):=\frac{1}{|t|^\gamma}(G_\alpha(x)-G_\alpha(x-t)).
$$
Observe that, by Lemma \ref{lema.2}
\begin{align*}
\|K_t\|_{L^1(\R^n)} &\leq \frac{1}{|t|^\gamma}\int_{\R^n} |G_\alpha(x)-G_\alpha(x-t)|\,dx \leq C|t|^{\alpha-\gamma}.
\end{align*}
Then, by Lemma \ref{lema.1}, for $t\in B_1$
$$
\int_{\R^n} \hat A(|T^* w|)\,dx \leq \int_{\R^n}\hat A(\|K_t\|_{L^1(\R^n)} |w|)\,dx \leq |t|^{\alpha-\gamma} \int_{\R^n}\hat A(C|w|)\,dx.
$$
 Integrating with respect to $|t|^{-n}dt$, we get that
 \begin{align*}
\int_{B_1}\int_{\R^n} \hat A(|T^* w|)\,dx|t|^{-n}dt &\leq  \int_{B_1}|t|^{\alpha-\gamma-n}\,dt \int_{\R^n} \hat A(C|w|)\,dx  \\
&\leq  C_3\int_{\R^n}\hat A(C_1 |w|)\,dx  
 \end{align*}
 since $\alpha-\gamma>0$ gives that, for some $C_3\geq 1$,
 $$
 \int_{B_1}|t|^{\alpha-\gamma-n}\,dt = n\omega_n\int_0^1 r^{\alpha-\gamma-1}\,dr\leq C_3.
 $$
 Let $\lambda =C_3 C \|w\|_{L^{\hat A}(\R^n)}$, then the previous computation gives
  \begin{align*}
 \int_{B_1}\int_{\R^n} \hat A\left(\frac{|T^* w|}{\lambda}\right)\,dx|t|^{-n}dt 
 &\leq C_3 \int_{\R^n}\hat A\left(\frac{C|w|}{\lambda}\right)\,dx \leq 
  \int_{\R^n}\hat A\left(\frac{|w|}{\|w\|_{L^A(\R^n)}}\right)\,dx=1,
  \end{align*}
  where we have used the definition of the Luxemburg norm, and this implies that
\begin{equation} \label{cota.t.2}
 \|T^*w\|_{L^{\hat{A}}(d\mu, \mathbb{R}^n \times B_1 )} \leq C_3C \|w\|_{L^{\hat A}(\R^n)}.
\end{equation}

\medskip
\noindent \underline{$\bullet$ Case $|t|<1$ and $\alpha=1$}. 
 In this case $\nabla G_1\not\in L^1(\R^n)$. We use the  Calder\'on-Zygmund estimates for singular integral operators. 
 
 Observe that by \cite[(5.6)]{AS}, $|\mathcal{F}(\nabla G_1)|$ is bounded.  Moreover, by \cite[(9.2)]{AS}, 
 $$
 |D_i G_1(x)|\leq |x|^{-n}, \qquad |D_{ij} G_1(x)| \leq C |x|^{-n-1} \qquad \text{for all }i,j=1,\ldots, n.
 $$
 Therefore, in light of  \cite[Theorem 5.2]{D} and the Calder\'on-Zygmund Theorem  \cite[Theorem 5.1]{D}, the tempered distribution $\nabla G_1$ is bounded from $L^p(\R^n)$ to $L^p(\R^n)$ for any $p>1$. In particular, it is bounded in $L^2(\R^n)$ and defines a Calder\'on-Zygmund singular integral operators. 
 
 Thus, by Corollary 5.4.3 in \cite{HH} and since $A$ and $\hat A$ satisfy the $\Delta_2$-condition, this operators is continuous from $L^A(\mathbb{R}^n)$ to $L^A(\mathbb{R}^n)$ and so  
 $$
 \|(\nabla G_1)*w \|_{L^{\hat A}(\R^n)} \leq C\|w\|_{L^{\hat A}(\R^n)}.
 $$
 Hence, \eqref{cota.t.1} can be replaced by
 \begin{align} \label{cota.t.3}
 \begin{split}
 \|T^*w\|_{L^{\hat{A}}(d\mu, |t| < 1)} \leq 
 c_1 \|(\nabla G_\alpha)* w\|_{L^{\hat A}(\R^n)}
 \leq C c_1 \|w\|_{L^{\hat A}(\R^n)}.
 \end{split}
 \end{align}

 \medskip

\noindent \underline{$\bullet$ Case $|t|\geq 1$.} By triangular inequality in expression \eqref{eqt.star}
$$
\|T^*w\|_{L^{\hat{A}}(d\mu,  \mathbb{R}^n \times B_1^c  )} \leq \left\| \frac{J_\alpha w(x)}{|t|^\gamma} \right\|_{L^{\hat{A}}(d\mu,  \mathbb{R}^n \times B_1^c  )} + \left\| \frac{J_\alpha w(x-t)}{|t|^\gamma} \right\|_{L^{\hat{A}}(d\mu,  \mathbb{R}^n \times B_1^c  )}.
$$
Let us analyze the first term. We proceed as in the previous case. Denote $\lambda:= C_4 \|J_\alpha\|_{L^{\hat A}(\R^n)}$, where $C_4\geq 1$ is a  constant to determine. Then, since $|t|\geq 1$,
\begin{align*}
\int_{|t| \geq 1}\int_{\R^n} \hat{A} \left( \frac{|J_\alpha w(x)|}{ \lambda |t|^\gamma} \right) \frac{dx dt}{|t|^n} &\leq  \frac{1}{C_4}
 \int_{|t| \geq 1} \frac{1}{ |t|^{n+\gamma }} \left( \int_{\mathbb{R}^n} \hat{A} \left( \frac{|J_\alpha w(x)|}{\|J_\alpha\|_{L^{\hat A}(\R^n)}} \right) dx \right) dt\leq 1,
\end{align*}
where we have used the definition of the Luxemburg norm,  and we have chosen $C_4 =\left(\int_{|t| \geq 1} |t|^{-n-\gamma}  dt\right)^{-1}$.

\noindent  Using again the definition of the norm together with Lemma \ref{lema.1}, this gives that
$$
\left\| \frac{J_\alpha w(x)}{|t|^\gamma} \right\|_{L^{\hat{A}}(d\mu,  \mathbb{R}^n \times B_1^c  )}  \leq C_4 \|J_\alpha\|_{L^{\hat A}(\R^n)} = C_4 \|G_\alpha* w\|_{L^{\hat A}(\R^n)} \leq C_4 \|G_\alpha\|_{L^1(\R^n)} \|w\|_{L^{\hat A}(\R^n)}.
$$
The second norm term can be bounded in an analogous way, obtaining, 
\begin{equation} \label{cota.t.4}
\|T^*w\|_{L^{\hat{A}}(d\mu,  \mathbb{R}^n \times B_1^c  )} \leq
2C_4 \|G_\alpha\|_{L^1(\R^n)} \|w\|_{L^{\hat A}(\R^n)}.
\end{equation}

\medskip
Finally, from \eqref{cota.t.1}, \eqref{cota.t.2}, \eqref{cota.t.3} and \eqref{cota.t.4} we obtain \eqref{a.probar}, and the proof in concluded.
\end{proof}

\begin{theorem} \label{teo.s2}
Let $u\in W^{s',A}(\R^n)$ and let $s'>s>0$. Then, there exists $C>0$ such that
$$
\|u\|_{H^{s,A}(\R^n)} \leq C \|u\|_{W^{s',A}(\R^n)}.
$$
\end{theorem}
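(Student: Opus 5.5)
The plan is to follow the classical Aronszajn--Mulla--Szeptycki strategy. We write $u$ as a Bessel potential of an explicit $L^A$ density built from the first-order differences of $u$, and then use Proposition \ref{prop.cont} to control that density by duality. Throughout, $0<s<s'$, and we may assume $s'<1$ (otherwise we reduce to the local case). Put $\varepsilon:=s'-s>0$. Formally, $(I-\Delta)^{s/2}=G_{-s}$ splits as $G_{-s}=G_{\alpha}*G_{-s-\alpha}$ with $\alpha>s'$ chosen later. The key representation, analogous to \cite[Eq. (5.30)]{AS} and to the inversion formula used in Theorem \ref{sobolev include in Bessel}, is an identity of the form
$$
G_{-s}u = c_1\,G_{\beta}*u + c_2\iint_{\R^n\times\R^n} \frac{G_\alpha(\cdot-x)-G_\alpha(\cdot-x+t)}{|t|^{\gamma}}\,\frac{u(x+t)-u(x)}{|t|^{s'}}\,\frac{dx\,dt}{|t|^n},
$$
with $\gamma=\alpha-s>0$ and $\beta>0$. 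This comes from the Fourier identity $(1+|\xi|^2)^{s/2}=c\,(1+|\xi|^2)^{-\alpha/2}\int (1-\cos(t\cdot\xi))|t|^{-n-s'-\gamma}\,dt+$ a lower-order term with integrable kernel. In other words, $v(x,t):=D^{s'}u(x,t)$ plays the role of the fractional derivative and $K(z,x,t)$ from Proposition \ref{prop.cont} is the reconstruction kernel. I would first verify this identity for $u\in\mathcal S$ by a Fourier transform computation, choosing $\alpha$ and $\gamma$ with $\alpha>\gamma>0$ and $\gamma<1$ so that the proposition applies.

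Next comes the extension to $u\in W^{s',A}(\R^n)$. Since $A\in\Delta_2$, smooth compactly supported functions are dense in $W^{s',A}$ (mollification plus truncation, using that translations are continuous in $L^A$ under $\Delta_2$). Let $f_u$ denote the right-hand side above. For $w\in L^{\hat A}(\R^n)$, Proposition \ref{prop.cont} with $v=D^{s'}u\in L^A(d\mu)$ gives
$$
\Big|\int f_u\,w\,dz\Big|\le C\big(\|G_\beta\|_{L^1}\|u\|_{L^A}+\|D^{s'}u\|_{L^A(d\mu)}\big)\|w\|_{L^{\hat A}}.
$$
Here $\|D^{s'}u\|_{L^A(d\mu)}=[u]_{W^{s',A}}$. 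The first term is handled by Hölder's inequality and Lemma \ref{lema.1}. Using the norm duality $\|f\|_{L^A}\simeq\sup_{\|w\|_{L^{\hat A}}\le1}\int fw$, we get $f_u\in L^A$ with $\|f_u\|_{L^A}\le C\|u\|_{W^{s',A}}$.

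It remains to check that $u=G_s*f_u$. This holds for Schwartz $u$ by construction. For general $u$, we take $u_k\to u$ in $W^{s',A}$. Then $f_{u_k}\to f_u$ in $L^A$ by the linear estimate just obtained, and $G_s*f_{u_k}\to G_s*f_u$ by Lemma \ref{lema.1}, while $u_k\to u$ in $L^A$. Hence $u\in H^{s,A}$, and $\|u\|_{H^{s,A}}=\|f_u\|_{L^A}\le C\|u\|_{W^{s',A}}$.

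The main obstacle I expect is the exact Fourier representation with kernel $K$ and parameters compatible with Proposition \ref{prop.cont}, and in particular making sense of the singular $t$-integral at $t=0$ and $t=\infty$. I would first try to take it directly from \cite[Sections 5--6]{AS}. If the constants there force $\gamma\ge1$, I would instead factor through an intermediate order: prove the estimate into $H^{s'',A}$ with $s<s''<s'$ and then use the monotonicity of the Bessel spaces (the Corollary above). This step uses the room $s'>s$, which the first paragraph otherwise only uses to guarantee $\gamma>0$.
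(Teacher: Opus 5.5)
Your route is the paper's: the inversion formula \cite[formula (5.26)]{AS}, Proposition \ref{prop.cont} applied by duality to $v=D^{s'}u\in L^A(d\mu)$, and density of smooth functions in $W^{s',A}(\R^n)$. However, the representation you wrote down is false as stated, and the error hides where $s'>s$ is actually used.

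Since $\int(1-\cos(t\cdot\xi))|t|^{-n-s'-\gamma}\,dt=c\,|\xi|^{s'+\gamma}$ for $0<s'+\gamma<2$, the double integral has symbol a constant times $(1+|\xi|^2)^{-\alpha/2}|\xi|^{s'+\gamma}$. This has order $s'+\gamma-\alpha$, so matching $(1+|\xi|^2)^{s/2}$ forces $\alpha-\gamma=s'-s$. Your choice $\gamma=\alpha-s$ gives order $s'-s$, so the identity fails unless $s'=2s$. With the correct relation, $s'>s$ is exactly the hypothesis $\alpha>\gamma$ of Proposition \ref{prop.cont}. For instance, it is what makes $\int_{B_1}|t|^{\alpha-\gamma-n}\,dt$ finite in the case $\alpha<1$ of its proof. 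It is not merely a device to get $\gamma>0$. The paper takes $\gamma=s'<1$ and $\alpha=2s'-s$. So there is no risk of $\gamma\ge1$, and your fallback through an intermediate order is unnecessary.

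Two further points would come up in your Fourier check. First, you must pair $G_\alpha(z-x)-G_\alpha(z-x+t)$ with $u(x)-u(x-t)$, as the paper does, so that the $t$-symbol is $|1-e^{it\cdot\xi}|^2=2(1-\cos(t\cdot\xi))$. With your pairing $u(x+t)-u(x)$ you get $2e^{it\cdot\xi}(1-\cos(t\cdot\xi))$. Its integral against $|t|^{-n-\sigma}$, with $\sigma=s'+\gamma$, is a multiple of $(2^{\sigma-1}-1)|\xi|^{\sigma}$, which vanishes at $\sigma=1$ (e.g.\ $s'=1/2$ if $\gamma=s'$). Second, with the pure weight $|t|^{-n}$ your remainder is not literally $G_\beta*u$. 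It is convolution with a positive $L^1$ kernel, which suffices for Lemma \ref{lema.1}. The paper avoids this by using the Bessel-weighted measure $d\mu_{s'}$ of \cite{AS}. That measure makes the remainder exactly $G_{2s'-s}*u$ and satisfies $d\mu_{s'}\le c\,dx\,dt/|t|^n$, so Proposition \ref{prop.cont} still applies.
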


\begin{proof}
Let $u\in W^{s',A}(\R^n)$ and let $u_k\in C_0^\infty(\R^n)$ so that $u_k\to u$ in $ W^{s',A}(\R^n)$. The inversion formula \cite[formula (5.26)]{AS}  has the form
\begin{align}\label{def T}
\begin{split}
G_{-s}u_k&= G_{2s'-s}*u_k(z) + \int_{\R^n} \int_{\R^n} \frac{(G_{2s'-s}(z-x)-G_{2s'-s}(z-y))(u_k(x)-u_k(y))}{|x-y|^{2s'}}d\mu_{s'}(x,y)\\
&=
G_{2s'-s}*u_k(z) + \int_{\R^n} \int_{\R^n} \frac{(G_{2s'-s}(z-x)-G_{2s'-s}(z-x+t))(u_k(x)-u_k(x-t))}{|t|^{2s'}}d\mu_{s'}(x,t)\\
&=
G_{2s'-s}*u_k(z) + \int_{\R^n} \int_{\R^n} \frac{\Delta_{t}G_{2s'-s}(z-x)}{|t|^{s'}} \frac{\Delta_t u_k(x)}{|t|^{s'}}d\mu_{s'}(x,t)\\
&:=G_{2s'-s}*u_k(z) + (Tu_k)(z)
\end{split}
\end{align}
where $\Delta_t(\cdot)$ is the increment operator, and 
$$
d\mu_{s'}(x,t) = \frac{1}{C(n,s') G_{2n+2s'}(0)} \frac{G_{2n+2s'}(t)}{|t|^n}dxdt.
$$
Observe that, by \cite[Proposition 6.15]{grafakos}
\begin{align}
G_{2n+2s'}(t) \leq 
\begin{cases}
c_1(n,s')|t|^{2s'+n}\leq C_1 &\text{ if }|t|\leq 2\\
c_2(n,s')e^{-|t|/2}\leq C_2 &\text{ if }|t|\geq 2,
\end{cases}
\end{align}
then, for some positive constant $c$ depending on $s'$ and $n$ it holds that
\begin{equation} \label{d.mu}
d\mu_{s'}(x,t) \leq c s'(1-s')\frac{dxdt}{|t|^n},
\end{equation}
where we have used that $1/C(n,s')\leq s'(1-s')$, see \cite[p.274]{AS}.

We note that convolution with  $G_{2s'-s}$ is a continuous operator from  $L^A(\R^n)$ to  $L^A(\R^n)$. Indeed, by Lemma \ref{lema.1} we have that for any $v\in L^A(\R^n)$
\begin{align} \label{ineq1}
\|G_{2s'-s}*v\|_{L^A(\R^n)} \leq \|G_{2s'-s}\|_{L^1(\R^n)}\|v\|_{L^A(\R^n)}=\|v\|_{L^A(\R^n)}.
\end{align}

Regarding the operator $T$ in \eqref{def T}, Proposition \ref{prop.cont} can be applied to $T$  with $\gamma=s'$ and $\alpha=2s'-s$  since $0<s<s'<1$, which gives $2s'-s>s'$, and $\frac{\Delta_t u(x)}{|t|^{s'}} \in L^A(\R^n \times \R^n, d\mu)$. Then, $T$ is continuous from $L^{A}(\R^n \times \R^n, d\mu)$ to $L^A(\R^n)$ and so for any $v\in L^{A}(\R^n \times \R^n, d\mu)$,
\begin{align*}
\|T v\|_{L^A(\R^n)} \leq C_2  \left\| \frac{\Delta_t v(x)}{|t|^{s'}}\right\|_{L^{A}(\R^n \times \R^n, d\mu)}=C_2[v]_{W^{s',A}(\R^n)}.
\end{align*}In particular, since $u_k\to u$ in $W^{s', A}(\R^n)$, it follows
$$\|T (u_k-u)\|_{L^A(\R^n)}\leq C[u_k-u]_{W^{s',A}(\R^n)}\to 0 \quad \text{as }k\to \infty.$$

As a result, using the expression of the inversion formula in \eqref{def T}, writing
\begin{equation}
\begin{split}
u_k & =G_{s}*(G_{-s}*u_k)\\& = G_s*\left(G_{2s'-s}*(u_k-u)+T(u_k-u) \right)+ G_s*\left( G_{2s'-s}*u+T(u)\right)\\ & =  G_s*\left(G_{2s'-s}*(u_k-u)\right) + G_s*\left(T(u_k-u) \right)+ G_s*\left( G_{2s'-s}*u+T(u)\right),
\end{split}
\end{equation}and taking $k\to \infty$ in $L^A(\R^n)$, we get
$$u=G_s*\left( G_{2s'-s}*u+T(u)\right) \quad \text{a. e. in }\R^n,$$and also
$$f:=G_{-s}u = G_{2s'-s}*u+T(u) \in L^A(\R^n).$$Thus, $u\in H^{s, A}(\R^n)$. Finally,
$$\|u\|_{H^{s, A}(\R^n)}=\|f\|_{L^A(\R^n)}\leq  \|G_{2s'-s}\|_{L^1(\R^n)}\|u\|_{L^A(\R^n)}+ C[u]_{W^{s', A}(\R^n)}=C'\|u\|_{W^{s', A}(\R^n)}.$$
This concludes the proof. 
\end{proof}

\section{Strauss Lemma for potential spaces} \label{sec.strauss}

We consider the restriction of the Bessel-Orlicz potential space to radial functions. Given a Young function $A$ and $s\in \R$, we define the space
$$
H^{s,A}_{rad}(\R^n) =\{u \in H^{s,A}(\R^n)\colon u(x)=u_0(|x|)\}.
$$

\begin{lemma} \label{lema.radial}
Let $u\in H^{s,A}_{rad}(\R^n)$. Then $u=G_s*f$, where $f\in L^A(\R^n)$ is a radial function.
\end{lemma}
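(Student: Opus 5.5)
We take the density $f\in L^A(\R^n)$ with $u=G_s*f$, which exists and is unique by the lemma after the definition of $H^{s,A}(\R^n)$, and show that it is radial. We use the rotation invariance of the Bessel kernel together with that uniqueness statement.

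First, for every orthogonal matrix $R\in O(n)$, $G_s(Rx)=G_s(x)$. This holds because the Fourier transform commutes with rotations: $\widehat{G_s\circ R}(\xi)=\hat G_s(R\xi)=(1+|R\xi|^2)^{-s/2}=\hat G_s(\xi)$. For $s>0$ it is also visible from the integral representation of $G_s$ as a function of $|x|$ alone.

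Next, define $f_R(x):=f(Rx)$. Since $|\det R|=1$, the change of variables $y\mapsto Ry$ gives $\Phi_A(f_R/\lambda)=\Phi_A(f/\lambda)$ for every $\lambda>0$. Hence $f_R\in L^A(\R^n)$ and $\|f_R\|_{L^A}=\|f\|_{L^A}$. Using the invariance of $G_s$ and the same change of variables,
$$
(G_s*f_R)(x)=\int_{\R^n}G_s(x-y)f(Ry)\,dy=\int_{\R^n}G_s(x-R^{-1}z)f(z)\,dz=\int_{\R^n}G_s(Rx-z)f(z)\,dz=u(Rx)=u(x),
$$
where the last equality holds because $u$ is radial. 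Thus $G_s*f_R=G_s*f$ with both $f,f_R\in L^A(\R^n)$, and the uniqueness part of that lemma gives $f_R=f$ a.e. for each fixed $R\in O(n)$.

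The main obstacle is passing from "$f\circ R=f$ a.e. for each $R$", where the exceptional null set depends on $R$, to "$f$ agrees a.e. with a genuinely radial function". We do this by averaging over the group. Let $\nu$ be the normalized Haar measure on $O(n)$ and set
$$
\tilde f(x):=\int_{O(n)}f(Rx)\,d\nu(R).
$$
This is well defined a.e. by Fubini, since $(R,x)\mapsto f(Rx)$ is measurable and locally integrable on $O(n)\times\R^n$. The function $\tilde f$ is radial, being invariant under every rotation by the invariance of $\nu$. By Fubini again,
$$
\int_{\R^n}|\tilde f-f|\,\varphi\,dx\le\int_{O(n)}\int_{\R^n}|f_R-f|\,\varphi\,dx\,d\nu(R)=0
$$
for every $\varphi\ge0$ in $C_0(\R^n)$. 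Hence $\tilde f=f$ a.e. Replacing $f$ by $\tilde f$ does not change $G_s*f$ or $\|f\|_{L^A}$, so $u=G_s*\tilde f$ with $\tilde f\in L^A(\R^n)$ radial, which proves the lemma.
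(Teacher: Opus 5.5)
Your proof is correct and rests on the same idea as the paper's: rotation invariance of $G_s$ gives $G_s*(f\circ R)=u\circ R=u=G_s*f$, and injectivity of $f\mapsto G_s*f$ then forces $f\circ R=f$. You get this by a direct change of variables plus the uniqueness lemma (so for $s>0$, the case the paper treats and uses), where the paper argues through the distributional pairing with $\psi=G_s*\varphi$; your Haar-averaging step, which builds a genuinely radial representative from the $R$-dependent a.e.\ identities, covers a point the paper's proof passes over without comment.
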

\begin{proof}
To establish the lemma, it suffices to show that $f$ is invariant under rotation; that is, $f \circ R = f$ for every rotation $R \in SO(n)$. Let $ (f\circ R)(\psi)$ with $\psi\in \mathcal{S}$, and define
$$
\varphi=\mathcal{F}^{-1}((1+|x|^2)^{s/2} \hat\psi).
$$
This gives that $\hat \psi=\mathcal{F}(G_s*\varphi)$, so, $\psi = G_s*\varphi$. Hence, 
\begin{align*}
 (f\circ R) (\psi) &=  f \left((G_s* \varphi)\circ R^{-1}\right) = 
 f \left((G_s\circ R^{-1})* (\varphi \circ R^{-1} )\right)
= 
 f \left( G_s* (\varphi \circ R^{-1} )\right)
\end{align*}
where we have used that $G_s$ is radial. Then, since $u\in H^{s,A}_{rad}(\R^n)$, 
\begin{align*}
( f\circ R)( \psi) &= 
 f\left( G_s* (\varphi \circ R^{-1} )\right)
= 
(f*G_s)( \varphi \circ R^{-1})
= 
( u\circ R) (\varphi)  
= 
 u( \psi).
\end{align*}
Finally, from the equality $ u( \varphi )= ( G_s*f)( \varphi)  =  f(G_s*\varphi) =  f(\varphi)$, we get the result.
\end{proof}

The following result plays a fundamental role in the proof of the Strauss Lemma. Our approach is inspired by the techniques established in \cite{EP}.

\begin{lemma} \label{lema.strauss}

Let $A$ be a Young function so that $A$ and $\hat A$ satisfy the $\Delta_2$-condition. Let $f\in L^A(\R^n)$ be a radial function.  Then for all $R>0$ and all $x$,
$$
|f*\chi_{B(0,R)}(x)|\leq C   \frac{cR^{n-1}}{\rho^{n-1}} \left\{\hat A^{-1}\left( \frac{\rho^{1-n}}{R}\right) \right\}^{-1} \|f\|_{L^A(\R^n)} , \quad  \rho:=|x|.
$$
\end{lemma}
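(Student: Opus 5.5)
We follow the strategy of \cite{EP}: write the convolution as an integral of $f$ over a ball, pass to polar coordinates using the radiality of $f$, and then apply H\"older's inequality in the Orlicz setting. Explicitly,
$$
f*\chi_{B(0,R)}(x)=\int_{B(x,R)} f(y)\,dy=\int_0^\infty f_0(r)\,\mathcal{H}^{n-1}\big(\partial B(0,r)\cap B(x,R)\big)\,dr ,
$$
where $f(y)=f_0(|y|)$. Only radii $r\in(\rho-R,\rho+R)$ contribute, so the radial variable runs over an interval of length at most $2R$.

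\textbf{Step 1: geometric estimate.} The key geometric input is the bound
$$
\mathcal{H}^{n-1}\big(\partial B(0,r)\cap B(x,R)\big)\le c\,R^{n-1},
$$
valid for every $r$, because the spherical cap has radius comparable to $R$. Combining this with $r\sim\rho$ when $\rho\ge 2R$, we insert the weight $r^{n-1}$ and obtain
$$
|f*\chi_{B(0,R)}(x)|\le c\,\frac{R^{n-1}}{\rho^{n-1}}\int_{E}|f(y)|\,dy,\qquad E:=\{y:\ \big||y|-\rho\big|<R\}.
$$
Here we use that $\partial B(0,r)$ has area $n\omega_n r^{n-1}$, so each unit of $r^{n-1}dr$ is measured against the full shell.

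\textbf{Step 2: Orlicz H\"older on the shell.} We now apply the H\"older inequality for Orlicz spaces to $f\chi_E$:
$$
\int_E|f|\le 2\|f\|_{L^A}\,\|\chi_E\|_{L^{\hat A}} .
$$
The norm of a characteristic function is explicit: $\|\chi_E\|_{L^{\hat A}}=1/\hat A^{-1}(1/|E|)$. The shell has measure $|E|\approx c\,\rho^{n-1}R$ when $\rho\ge 2R$. This gives
$$
|f*\chi_{B(0,R)}(x)|\le C\,\frac{R^{n-1}}{\rho^{n-1}}\Big\{\hat A^{-1}\Big(\frac{c\,\rho^{1-n}}{R}\Big)\Big\}^{-1}\|f\|_{L^A}.
$$
The $\Delta_2$ property of $\hat A$, in the form of its power bounds \eqref{potencias}, then lets us absorb the constant $c$ inside $\hat A^{-1}$ into the outer constant $C$.

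\textbf{Step 3: the regime $\rho<2R$.} Here we simply bound $\int_{B(x,R)}|f|\le 2\|f\|_{L^A}\,\|\chi_{B(x,R)}\|_{L^{\hat A}}$, with $|B(x,R)|\sim R^n$. It remains to check that the claimed right-hand side dominates this bound. For $\rho\lesssim R$ the factor $R^{n-1}/\rho^{n-1}$ is large, and the ratio of the two $\hat A^{-1}$ terms is controlled through $\hat A^{-1}(\lambda t)\le \max\{\lambda^{1/p^-},\lambda^{1/p^+}\}\,\hat A^{-1}(t)$, a consequence of the $\Delta_2$ condition for $\hat A$ and $A$.

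\textbf{Main obstacle.} We expect the delicate part to be the uniform-in-$r$ cap-area bound of Step 1, together with matching the resulting shell measure to the argument of $\hat A^{-1}$. This is exactly where, in the power case, homogeneity normally hides the constants; in the Orlicz setting we must instead track them through the $p^\pm$ growth bounds.
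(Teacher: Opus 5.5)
Your proof is correct and follows essentially the same route as the paper: polar coordinates, a bound of order $(R/\rho)^{n-1}$ on the normalized spherical-cap area, and Orlicz--H\"older against the shell $\{\,||y|-\rho|<R\,\}$ of measure $\sim\rho^{n-1}R$ when $\rho\ge 2R$. For $\rho<2R$ you use H\"older against the ball plus a comparison of inverse Young functions, as the paper does. Your one simplification is to pull the cap bound $cR^{n-1}$ out before applying H\"older, rather than computing the $L^{\hat A}(r^{n-1}dr)$-norm of the angular weight as the paper does.

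There is one small slip. The scaling bound in Step 3 should use the exponents of $\hat A$, not $p^\pm$; as written, with $p^\pm$, it is false in general (e.g.\ $A(t)=t^3$). However, you only need $\hat A^{-1}(\lambda t)\le \lambda\,\hat A^{-1}(t)$ for $\lambda\ge 1$ and $\hat A^{-1}(ct)\ge c\,\hat A^{-1}(t)$ for $c\le 1$. Both follow from the concavity of $\hat A^{-1}$ alone, without $\Delta_2$.
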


\begin{proof}
Let $f\in L^A(\R^n)$ be a radial function and denote $f_0(r)=f(x)$. Fix $x\in \R^n$ and consider $R>0$. If $y\in B(x,R)$, taking $y=ry'$, $r=|y|$, $y'\in \mathbb{S}^{n-1}$ and $x=rx'$, $\rho=|x|$, $x'\in \mathbb{S}^{n-1}$,
\begin{align*}
f*\chi_{B(0,R)}(x)&=\int_{\rho-R}^{\rho+R} \int_{\mathbb{S}^{n-1}} f_0(r) \chi_{B(x,R)}(ry') r^{n-1} \,d\sigma(y')dr\\ 
&= 
\int_{\rho-R}^{\rho+R} f_0(r) \left(\int_{\mathbb{S}^{n-1}}  \chi_{B(x,R)}(ry') \,d \sigma(y')\right)r^{n-1}\,dr.
\end{align*}
It is not hard to see that $\chi_{B(x,R)}(ry')=\chi_{[t_0,1]}(x'y')$ with $$t_0=\frac{r^2 +   \rho^2 - R^2}{2r\rho}.$$
Moreover, if $\rho\geq R$ it is obtained that $t_0\geq -1$. Then, by \cite[Apprndix D.3]{grafakos}
\begin{align*}
\int_{\mathbb{S}^{n-1}}  \chi_{B(x,R)}(ry') \,d \sigma(y') = \int_{\mathbb{S}^{n-1}} \chi_{[t_0,1]}(x'y')\,d\sigma(y')  \leq  \int_{-1}^1 \chi_{[t_0,1]}(t)(1-t^2)^\frac{n-3}{2}\,dt.
\end{align*}
If $\rho\geq R$, we get that
\begin{align*}
f*\chi_{B(0,R)}(x) &=  \int_{\rho-R}^{\rho+R} \int_{t_0}^1 f_0(r) (1-t^2)^\frac{n-3}{2}r^{n-1}\,dtdr.
\end{align*}
Hence, by H\"older's inequality we obtain that

\begin{align*}
|f*\chi_{B(0,R)}(x)|&\leq \int_{\rho-R}^{\rho+R} |f_0(r)|(1-t_0)^\frac{n-3}{2} r^{n-1}\,dr\\ 
&\leq 
\|f_0\|_{L^A((0,\infty), r^{n-1}dr)} \|(1-t_0)^\frac{n-1}{2}\|_{L^{\hat A}((\rho-R,\rho+R), r^{n-1}dr)}.
\end{align*}

\noindent Let us bound $\|f_0\|_{L^A(\R, r^{n-1}dr)}$.
Let $C=\max\{1, n\omega_n\}^{-1}\|f\|_{L^A(\R^n)}$. Then
\begin{align*}
\int_0^\infty A\left( \frac{|f_0(r)|}{C} \right)r^{n-1}\,dr &=
\frac{1}{n\omega_n}\int_{\R^n} A\left( \frac{|f(x)|}{C} \right)\,dx\\
&\leq\int_{\R^n}
A\left(\max\{1, n\omega_n\}  \frac{|f(x)|}{C} \right)\,dx \leq 1.
\end{align*}
Then, by definition of the Luxemburg norm we get that
$$
\|f_0\|_{L^A(\R, r^{n-1}dr)}\leq C \|f\|_{L^A(\R^n)}.
$$

\noindent To bound  $\|(1-t_0)^\frac{n-1}{2}\|_{L^{\hat A}((\rho-R,\rho+R), r^{n-1}dr)}$ we proceed as follows. Let $C>0$ to determinate. Taking $u=r/\rho$, $u\in [1-R/\rho,1+R/\rho]$ and then
\begin{align*}
\int_{\rho-R}^{\rho+R}  \hat A\left(\frac{(1-t_0)^{\frac{t-1}{2}}}{C} \right)r^{n-1}\,dr 
&=
\int_{\rho-R}^{\rho+R}  \hat A\left(\frac{1}{C}\left(1-\frac{r^2 +   \rho^2 - R^2}{2r\rho}\right)^{\frac{n-1}{2}} \right)r^{n-1}\,dr\\
&=
\rho^n \int_{1-R/\rho}^{1+R/\rho}  \hat A\left( \frac{1}{C} \left(\frac{(R/\rho)^2 -(1-u)^2}{2u} \right)^\frac{n-1}{2}\right)u^{n-1}\,du\\
&\leq 
\rho^n \int_{1-R/\rho}^{1+R/\rho}  \hat A\left( \frac{1}{C}\left(\frac{2}{u} \frac{R^2}{\rho^2} \right)^\frac{n-1}{2}\right)u^{n-1}\,du\\
&\leq 
\rho^n \int_{1-R/\rho}^{1+R/\rho}  \hat A\left( \frac{1}{C} \frac{1}{u^\frac{n-1}{2}}\frac{R^{n-1}}{\rho^{n-1}} \right)u^{n-1}\,du.
\end{align*}
If $\rho\geq 2R$, $u\in[1/2,3/2]$, so
\begin{align*}
\int_{\rho-R}^{\rho+R}  \hat A\left(\frac{(1-t_0)^{\frac{t-1}{2}}}{C} \right)r^{n-1}\,dr 
&\leq
\rho^n  \hat A\left( \frac{c}{C} \frac{R^{n-1}}{\rho^{n-1}} \right) \frac{R}{\rho}\leq 1
\end{align*}
when we take $C= \frac{cR^{n-1}}{\rho^{n-1}} \left\{\hat A^{-1}\left( \frac{\rho^{1-n}}{R}\right) \right\}^{-1}$. Therefore, by definition of the Luxemburg norm
$$
\|(1-t_0)^\frac{n-1}{2}\|_{L^{\hat A}((\rho-R,\rho+R), r^{n-1}dr)} \leq  \frac{cR^{n-1}}{\rho^{n-1}} \left\{\hat A^{-1}\left( \frac{\rho^{1-n}}{R}\right) \right\}^{-1}.
$$
This yields that, when $\rho\geq 2R$,
\begin{equation} \label{f.1}
|f*\chi_{B(0,R)}(x)|\leq C \|f\|_{L^A(\R^n)}   \frac{cR^{n-1}}{\rho^{n-1}} \left\{\hat A^{-1}\left( \frac{\rho^{1-n}}{R}\right) \right\}^{-1}.
\end{equation}

If $\rho<2R$, using Example 3.6.9 of \cite{kufner}
\begin{align} \label{f.2}
\begin{split}
|f*\chi_{B(0,R)}(x)|&\leq \int_{\R^n} |f(y)||\chi_{B(0,R)}(x-y)\,dy|\\
&\leq
\|f\|_{L^A(\R^n)} \| \chi_{B(0,R)}\|_{L^{\hat A}(\R^n)}\\
&\leq
\|f\|_{L^A(\R^n)} R^n A^{-1}\left(R^{-n} \right).
\end{split}
\end{align}

Let us see that, if $\rho<2R$, then
$$
R^n A^{-1}\left(R^{-n} \right) \le \frac{CR^{n-1}}{\rho^{n-1}} \left\{\hat A^{-1}\left( \frac{\rho^{1-n}}{R}\right) \right\}^{-1}.
$$
This is equivalent to see that
$$
  \hat A^{-1}\left( \frac{\rho^{1-n}}{R}\right) \left(\frac{\rho^{1-n}}{R} \right)^{-1}  \le   \frac{C}{A^{-1}\left(R^{-n} \right)}.
$$
We use that for all $t>0$, $t\leq A^{-1}(t)\hat A^{-1}(t)\leq 2t$, giving that
$$
  \hat A^{-1}\left( \frac{\rho^{1-n}}{R}\right) \left(\frac{\rho^{1-n}}{R} \right)^{-1} \leq \frac{2}{A^{-1}(\rho^{1-n}/R)}.
$$
Let us see that
$$
\frac{2}{A^{-1}(\rho^{1-n}/R)} \leq \frac{C}{A^{-1}\left(R^{-n} \right)}. 
$$
Since we assume the $\Delta_2$ condition, this is equivalent to see that, for some $C_1\geq 2$
$$
 A^{-1}\left(C_1 R^{-n} \right) \leq A^{-1}(\rho^{1-n}/R)
$$
and applying $A$ to both sides, this equals to
$$
C_1 R^{1-n} \leq \rho^{1-n}.
$$
But, $\rho<2R$ implies that $2^{1-n} R^{1-n} < \rho^{1-n}$, so we can take $C_1=2^{1-n}$. 

This concludes the proof.
\end{proof}

\begin{theorem} \label{teo.straus}
Let $A$ be a Young function so that $A$ and $\hat A$ satisfy the $\Delta_2$-condition. Let $u\in H^{s,A}_{rad}(\R^n)$ with  $1<sp^-$. Then there exists $C>0$ depending of $n$, $p^+$ and $p^-$ such that
$$
|u(x)|\leq C \frac{|x|^{1-n}}{\hat A^{-1}(|x|^{1-n})} \|u\|_{H^{s,A}(\R^n)}.
$$\end{theorem}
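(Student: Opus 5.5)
By Lemma \ref{lema.radial}, $u=G_s*f$ with $f\in L^A(\R^n)$ radial and $\|f\|_{L^A(\R^n)}=\|u\|_{H^{s,A}(\R^n)}$. The plan follows \cite{EP}: write the Bessel kernel as a superposition of characteristic functions of balls, so that Lemma \ref{lema.strauss} applies to each piece. Since $G_s$ is radial and decreasing (Lemma \ref{lema.G}), the layer-cake formula gives
$$
G_s(z)=\int_0^\infty \chi_{B(0,R)}(z)\,\bigl(-G_{s,0}'(R)\bigr)\,dR ,
$$
where $G_{s,0}(|z|)=G_s(z)$. Hence
$$
u(x)=\int_0^\infty \bigl(f*\chi_{B(0,R)}\bigr)(x)\,\bigl(-G_{s,0}'(R)\bigr)\,dR .
$$
Applying Lemma \ref{lema.strauss} inside the integral, with $\rho=|x|$, yields
$$
|u(x)|\le C\|f\|_{L^A}\,\rho^{1-n}\int_0^\infty R^{n-1}\,\Bigl\{\hat A^{-1}\Bigl(\tfrac{\rho^{1-n}}{R}\Bigr)\Bigr\}^{-1}\bigl(-G_{s,0}'(R)\bigr)\,dR .
$$

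The next step removes the dependence on $R$ from $\hat A^{-1}$ by factoring out $\hat A^{-1}(\rho^{1-n})$ using the power-type growth. From \eqref{eq.p}, $\hat A$ has indices $(p^+)'\le t\hat a(t)/\hat A(t)\le (p^-)'$. Consequently $\hat A^{-1}(\lambda\tau)\ge \min\{\lambda^{1/(p^+)'},\lambda^{1/(p^-)'}\}\hat A^{-1}(\tau)$. Taking $\lambda=1/R$, $\tau=\rho^{1-n}$ gives
$$
\Bigl\{\hat A^{-1}\Bigl(\tfrac{\rho^{1-n}}{R}\Bigr)\Bigr\}^{-1}\le \max\{R^{1/(p^+)'},R^{1/(p^-)'}\}\,\frac{1}{\hat A^{-1}(\rho^{1-n})}.
$$
Therefore
$$
|u(x)|\le C\,\frac{\rho^{1-n}}{\hat A^{-1}(\rho^{1-n})}\|f\|_{L^A}\int_0^\infty R^{n-1}\max\{R^{1/(p^+)'},R^{1/(p^-)'}\}\,|G_{s,0}'(R)|\,dR ,
$$
and it remains to show that this integral is finite, with a bound depending only on $n,s,p^\pm$.

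The main obstacle is the finiteness of this integral. For large $R$, $G_{s,0}'$ decays exponentially, so there is no problem there. Near $0$, by the kernel estimates (cf.\ (9.2) in \cite{AS}, used earlier in the paper), $|G_{s,0}'(R)|\lesssim R^{s-n-1}$ for $0<s<n$. For $R<1$ the maximum is $R^{1/(p^+)'}$, since $(p^+)'\le(p^-)'$ means the smaller exponent dominates, so the integrand behaves like $R^{s-2+1/(p^+)'}=R^{s-1-1/p^+}$. This is integrable at $0$ iff $s>1/p^+$, which is guaranteed by the hypothesis $sp^->1$ since $p^-\le p^+$. If the exponents turn out to be arranged differently, the hypothesis $sp^->1$ is exactly what covers the worst case $R^{s-1-1/p^-}$.

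One more subtlety must be handled. Lemma \ref{lema.strauss} is stated with a universal constant, but its proof splits into the cases $\rho\ge 2R$ and $\rho<2R$. The inequality should be used in its stated uniform form and not re-derived. If needed, the bound can be restricted to $|x|\ge 1$, which is the regime of interest, with small $|x|$ controlled by the same computation. Finally, the exchange of integrals and the use of $G_{s,0}'$ are justified by Tonelli, applied first to $|f|$.
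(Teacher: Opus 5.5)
Your proof is essentially the paper's. The paper bounds $G_s$ by the dyadic sum $\sum_k G_s(2^{k-1}a)\chi_{B(0,2^ka)}$ and applies Lemma \ref{lema.strauss} with $R=2^ka$. It then factors out $\hat A^{-1}(|x|^{1-n})$ using the growth index of $\hat A$, and letting $a\to 0$ gives $\int_0^\infty G_s(r)\,r^{n-2}(2r)^{1/\hat p^+}\,dr$ with $1/\hat p^+=1-1/p^-$.

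Your layer-cake identity is the exact continuous version of that Riemann-sum argument. Integrating by parts turns your integral of $|G_{s,0}'|$ into the paper's integral of $G_s$, which also spares you the derivative bound you only quoted for $s<n$. Your factor $\max\{R^{1/(p^+)'},R^{1/(p^-)'}\}$ is more careful than the paper's single power $(2r)^{1/\hat p^+}$. That power is only justified for $2r\le 1$, though this is harmless for large $r$ because $G_s$ decays exponentially.

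There is one slip to fix. Since $1/(p^-)'\le 1/(p^+)'$, for $R<1$ the maximum is $R^{1/(p^-)'}$, not $R^{1/(p^+)'}$. So the integrand near $0$ behaves like $R^{n-1}R^{1-1/p^-}R^{s-n-1}=R^{s-1-1/p^-}$, and $sp^->1$ is exactly the condition needed, not a surplus. The condition $s>1/p^+$ you first derived would not suffice; your fallback sentence is the correct statement and should replace it.

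The caveats in your last paragraph are unnecessary. Lemma \ref{lema.strauss} holds for every $R>0$ and every $x$, so no restriction to $|x|\ge 1$ is needed.
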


\begin{proof}
Let $u\in H^{s,A}(\R^n)$ be a radial function. Then, from Lemma \ref{lema.radial} there exists a radial function  $f\in L^A(\R^n)$, $f=f_0(|x|)$ such that $u=G_s*f$.

Let $f\in L^A(\R^n)$ be a radial function and denote $f_0(r)=f(x)$. Without loss of generality, assume $f\geq 0$. Since $G_s$ is decreasing, for a fix $a>0$ 
\begin{align*}
f*G_s(x) &= \int_{\R^n} f(x-y)G_s(y)\,dy=\sum_{k\in \mathbb{Z}} \int_{2^{k-1}a \leq |y|\leq 2^k a} f(x-y)G_s(y)\,dy\\
&\leq \sum_{k\in \mathbb{Z}}  G_s(2^{k-1}a ) \int_{ |y|\leq 2^k a} f(x-y)\,dy\\
&= \sum_{k\in \mathbb{Z}}  G_s(2^{k-1}a ) \int_{\R^n} f(x-y)\chi_{B(0,2^ka)}(y)\,dy\\
&= \sum_{k\in \mathbb{Z}}  G_s(2^{k-1}a )  f*\chi_{B(0,2^ka)}.
\end{align*}

From Lemma \ref{lema.strauss}, for a fix $a>0$, with $R=2^{k}a$, $\rho=|x|$, it  yields 
\begin{align*}
|G_s*f|&\leq \sum_{k\in \mathbb{Z}} G_s(2^{k-1}a) |g* \chi_{B(0,2^k a)}(x)|\\
&\leq C \|f\|_{L^A(\R^n)} |x|^{1-n} \sum_{k\in \mathbb{Z}} G_s(2^{k-1}a)   (2^k a)^{n-1} \left\{\hat A^{-1}\left( \frac{|x|^{1-n}}{2^k a}\right) \right\}^{-1}.
\end{align*}
If $r_k=2^{k-1}a$, then $\Delta r_k = r_{k+1}-r_k=2^{k-1}a$, so
\begin{align*}
\sum_{k\in \mathbb{Z}} G_s(2^{k-1}a)   (2^k a)^{n-1} \left\{\hat A^{-1}\left( \frac{|x|^{1-n}}{2^k a}\right) \right\}^{-1}&=
2^{n-1} \sum_{k\in \mathbb{Z}} G_s(r_k)   (r_k)^{n-1}  \left\{\hat A^{-1}\left( \frac{|x|^{1-n}}{2r_k}\right)  \right\}^{-1}\frac{\Delta r_k}{r_k}.
\end{align*}
This is the Riemann sum for the integral
$$
2^{n-1}\int_0^\infty G_s(r) r^{n-2} \left\{\hat A^{-1}\left( \frac{|x|^{1-n}}{2r}\right)  \right\}^{-1} \, dr.
$$
Denote by $\hat p^+>1$ the number such that
$$
\frac{t (\hat A(t))'}{\hat A(t)}\leq \hat p^+ \quad \text{ for all }t\geq 0.
$$
Then, it follows that
$$
\left\{\hat A^{-1}\left( \frac{|x|^{1-n}}{2r_k}\right)  \right\}^{-1}\leq  (2r_k)^\frac{1}{\hat p^+} \left\{\hat A^{-1}\left( |x|^{1-n}\right)  \right\}^{-1},
$$
and  letting $a\to 0$, we get
\begin{align*}
|G_s*f|&\leq C \|f\|_{L^A(\R^n)} \frac{|x|^{1-n}}{\hat A^{-1}(|x|^{1-n})} \int_0^\infty G_s(r) r^{n-2}  (2r)^\frac{1}{\hat p^+} \, dr\\
&= C \|f\|_{L^A(\R^n)} \frac{|x|^{1-n}}{\hat A^{-1}(|x|^{1-n})}  \int_{\R^n} G_s(x) |x|^{-1+\frac{1}{\hat p^+}} \, dx\\
&\leq  C \|f\|_{L^A(\R^n)} \frac{|x|^{1-n}}{\hat A^{-1}(|x|^{1-n})},  
\end{align*}
since the following relation holds $\frac{1}{\hat p^+}=1-\frac{1}{p^-}$, and hence
\begin{align*}
\frac{1}{n\omega_n}\int_{\R^n} G_s(x) |x|^{-1+\frac{1}{\hat p^+}} \, dx&=
\int_0^2 G_s(r) r^{n-1-1+\frac{1}{\hat p^+}} \, dr + 
\int_2^\infty G_s(r) r^{n-1-1+\frac{1}{\hat p^+}} \, dr\\
&\leq
C\int_0^2 r^{s-n}  r^{n-1-1+\frac{1}{\hat p^+}}\,dr +C \int_2^\infty e^{-R/2} r^{n-1-1+\frac{1}{\hat p^+}} \, dr\\
&=
C\int_0^2   r^{s-1-\frac{1}{p^-}}\,dr +C \int_2^\infty e^{-R/2} r^{n-1-1+\frac{1}{\hat p^+}} \, dr\leq C,
\end{align*}
where we have used that $s>1/p^-$.  The proof in now completed.
\end{proof}

\section{Orlicz-Lizorkin-Triebel spaces} \label{sec.lt}

In this section, we generalized the Lizorkin-Triebel spaces to the framework of Orlicz spaces.

Let $\Phi \in \mathcal{S}$ and let $\hat \Phi$ be its Fourier transform. Suppose that
$$\text{supp}\,\hat \Phi \subset B(0, 1), \quad \text{and}\quad \hat \Phi(\xi)=1 \quad \text{on} \quad B(0, 1/2).$$

 Set for $k\in \mathbb{Z}$,
$$
\Phi_k (x)=2^{nk}\Phi(2^k x) \quad \text{ so that } \quad \hat \Phi_k(\xi)=\hat\Phi(2^{-k}\xi),
$$
and define $\phi_k(x)=\Phi_k(x)-\Phi_{k-1}(x)$. Observe that

$$\text{supp}\,\hat \phi_k \subset B(0, 2^{k})\setminus B(0, 2^{k-2})$$and moreover
$$\hat \Phi (\xi) +\sum_{k=1}^\infty \hat \phi(\xi)=1 \quad \text{for all }\xi.$$

 Given a Young function $A$, $q>1$, and $s\in \R$, we define the Orlicz-Lizorkin-Triebel spaces as follows
$$
F_s^{A,q}(\R^n):=\left\{u\in \mathcal{S}'\colon \|\Phi*u\|_{L^A(\R^n)} + \left\| \left( \sum_{k=1}^\infty |2^{s k}\phi_k* u|^q\right)^{1/q} \right\|_{L^A(\R^n)}<\infty \right\}
$$
 endowed with the norm
$$
\|u\|_{F_s^{A,q}(\R^n)}=\|\Phi*u\|_{L^A(\R^n)} + \left\|\{2^{s k}\phi_k*u\}_{k=1}^\infty \right\|_{L^A(\ell^q)},
$$ 
where
$$
\|\{f_k\}_{k=1}^\infty\|_{L^A(\ell^q)} =\inf\left\{\lambda>0 \colon \int_{\R^n} A\left(\frac{\|f(x)\|_q}{\lambda} \right)\,dx\leq 1 \right\}
$$
being
$$
\|f(x)\|_q = \left(\sum_{k=1}^\infty |f_k(x)|^q\right)^\frac1q.
$$

Observe that in the definition of $F_s^{A, q}(\R^n)$ we ask whether $\Phi*u$ and  $\{2^{s k}\phi_k*u\}_{k=1}^\infty $ are regular distributions belonging to $L^A(\R^n)$ and $L^A(\ell^q)$, respectively. Also, we point out that as in the classical setting, we may define $F_s^{A,q}(\R^n)$ via Fourier transform as
\begin{equation}\label{alternative LT}
F_s^{A,q}(\R^n):=\left\{u\in \mathcal{S}'\colon \left\| \left( \sum_{k=0}^\infty |2^{s k}\mathcal{F}^{-1}(\hat \phi_k \hat u)|^q\right)^\frac1q \right\|_{L^A(\R^n)}<\infty \right\}.
\end{equation}


\subsection{A Lizorkin-Triebel characterization of potential spaces}

\begin{theorem} \label{teo.lt}
Assume that $A$ and $\hat A$ satisfy the $\Delta_2$-condition. Then, the Orlicz-Lizorkin-Triebel space with $q=2$ coincides with the Orlicz-Bessel potential space:
$$
F_s^{A,2}(\R^n)=H^{s,A}(\R^n)
$$
and there are constants $c_1$ and $c_2$ such that
$$
c_1 \|u\|_{H^{s,A}(\R^n)}\leq \|u\|_{F^{A,2}_s(\R^n)} \leq c_1 \|u\|_{H^{s,A}(\R^n)}.
$$
\end{theorem}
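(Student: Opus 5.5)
The plan is to reduce the identity to the case $s=0$, namely $F_0^{A,2}(\R^n)=L^A(\R^n)$ with equivalent norms, and then transport it to general $s$ by the lifting operator $G_{-s}$. For the case $s=0$ we use the Littlewood--Paley square function. Given $f\in L^A(\R^n)$, consider the vector-valued operator $f\mapsto \{\phi_k*f\}_{k\ge1}$ from $L^2(\R^n)$ to $L^2(\ell^2)$. By Plancherel and the bounded overlap of the supports of $\hat\phi_k$, it is bounded on $L^2$. Its kernel $\vec K(x)=\{\phi_k(x)\}_k$ satisfies the standard Calder\'on--Zygmund estimates $\|\vec K(x)\|_{\ell^2}\le C|x|^{-n}$ and $\|\nabla\vec K(x)\|_{\ell^2}\le C|x|^{-n-1}$; these follow by summing the Schwartz decay of $2^{nk}\Phi(2^kx)$ over $k$. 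By the vector-valued Calder\'on--Zygmund theorem it is bounded on $L^p$ for every $1<p<\infty$.

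We then pass to $L^A$ exactly as in Theorems \ref{sobolev include in Bessel} and \ref{Bessel include in Sobolev}. Since $A$ and $\hat A$ satisfy $\Delta_2$, \eqref{eq.p} holds with $1<p^-\le p^+<\infty$, and we invoke the Orlicz extension of Calder\'on--Zygmund operators (Corollary 5.4.3 in \cite{HH}, or Boyd/Marcinkiewicz-type interpolation between $L^{p_0}$ and $L^{p_1}$ with $1<p_0<p^-\le p^+<p_1$) for the $\ell^2$-valued operator. This gives $\|\{\phi_k*f\}\|_{L^A(\ell^2)}\le C\|f\|_{L^A}$. Together with $\|\Phi*f\|_{L^A}\le\|\Phi\|_{L^1}\|f\|_{L^A}$ from Lemma \ref{lema.1}, we obtain $\|f\|_{F_0^{A,2}}\le C\|f\|_{L^A}$.

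For the converse inequality we use duality. Choose $\tilde\phi_k$ with $\hat{\tilde\phi}_k=1$ on $\operatorname{supp}\hat\phi_k$, supported in a slightly larger annulus, and similarly $\tilde\Phi$. For $w\in L^{\hat A}$ write
$$
\langle f,w\rangle=\langle \Phi*f,\tilde\Phi*w\rangle+\sum_k\langle \phi_k*f,\tilde\phi_k*w\rangle .
$$
Apply the pointwise Cauchy--Schwarz inequality in $\ell^2$, then the Orlicz H\"older inequality, and then the square-function bound in $L^{\hat A}(\ell^2)$; the latter holds because $\hat A$ also satisfies $\Delta_2$ in both directions. This gives $\|f\|_{L^A}\le C\|f\|_{F_0^{A,2}}$. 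Here we use the norm of $L^A$ as a dual norm, which is equivalent to the Luxemburg norm. For $f$ only in $\mathcal S'$, we first show the Littlewood--Paley series converges in $L^A$.

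For general $s$, write $u=G_s*f$ and $2^{sk}\phi_k*u=2^{sk}\hat G_s(D)\phi_k*f$. Because $\hat\phi_k$ is supported where $|\xi|\sim2^k$, we have $2^{sk}(1+|\xi|^2)^{-s/2}\tilde{\hat\phi}_k(\xi)=m_k(\xi)$, and the family $\{m_k\}$ satisfies uniform Mikhlin estimates $|\partial^\beta m_k(\xi)|\le C_\beta|\xi|^{-|\beta|}$. Hence the operator $\{g_k\}\mapsto\{m_k(D)g_k\}$ is an $\ell^2$-valued Calder\'on--Zygmund operator, again bounded on $L^A(\ell^2)$. This yields $\|u\|_{F_s^{A,2}}\le C\|f\|_{L^A}$; the low-frequency part $\Phi*u=(\Phi*G_s)*f$ is handled by Lemma \ref{lema.1}. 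Conversely, set $f=G_{-s}u$, that is $\hat f=(1+|\xi|^2)^{s/2}\hat u$. Then $\phi_k*f=2^{sk}\tilde m_k(D)(\phi_k*u)$ with the inverse multipliers, which are also uniformly Mikhlin. Applying the $s=0$ result gives $\|f\|_{L^A}\le C\|f\|_{F_0^{A,2}}\le C\|u\|_{F_s^{A,2}}$, so $u\in H^{s,A}$.

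The main obstacle is the vector-valued ($\ell^2$-valued) extension of the Calder\'on--Zygmund theory to Orlicz spaces. The paper so far only uses the scalar version \cite{HH}. My first attempt would be to prove $L^p(\ell^2)$ bounds for $p$ in an interval containing $[p^-,p^+]$ and then use an interpolation theorem of Boyd type for Orlicz spaces, valid since the Boyd indices of $L^A$ lie in $[1/p^+,1/p^-]\subset(0,1)$. If that fails, I would use the good-$\lambda$/extrapolation route: weighted $L^2(w)$ bounds for $A_2$ weights, and Rubio de Francia extrapolation to $L^A$, which is known to hold under $\Delta_2$ for $A$ and $\hat A$.
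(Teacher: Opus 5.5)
Your proposal is correct, but it is organized differently from the paper's proof.

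\emph{The paper's route.} It follows Adams--Hedberg and never uses a vector-valued singular integral. For each $t\in[0,1]$ it takes the scalar multiplier $\bigl(r_0(t)\hat\Phi+\sum_i 2^{is}r_i(t)\hat\phi_i\bigr)(1+|\xi|^2)^{-s/2}$. Its Calder\'on--Zygmund constants are uniform in $t$, so the scalar Orlicz result \cite[Corollary 5.4.3]{HH} applies directly. The square function is then recovered by integrating in $t$ and using Khinchin's inequality. The reverse inclusion is done in one step at level $s$ by duality:
\begin{itemize}
\item write $f=\mathcal{F}^{-1}(m\hat h)$ with $m=(\hat\Phi^2+\sum_i\hat\phi_i^2)^{-1}$;
\item norm $h$ by some $g=G_s*v$;
\item use Plancherel to turn $\int gh$ into $\int(\Phi*u)(\Phi*v)+\sum_i(2^{is}\phi_i*u)(2^{-is}\phi_i*v)$;
\item apply the forward estimate to $v$, with $\hat A$ and $-s$ in place of $A$ and $s$.
\end{itemize}

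\emph{Your route.} You first prove $F^{A,2}_0=L^A$ with a genuinely $\ell^2$-valued Calder\'on--Zygmund operator; your duality step is the $s=0$ analogue of the paper's. You then transport this to level $s$ with the diagonal multipliers $m_k(D)$ and $\tilde m_k(D)$.

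\emph{The obstacle you flag.} In your forward direction it can be bypassed: $\{2^{sk}\phi_k*u\}=\{\psi_k*f\}$ with $\hat\psi_k=2^{sk}(1+|\xi|^2)^{-s/2}\hat\phi_k$ is again a square function with a standard kernel. In your converse, however, $\{g_k\}\mapsto\{\tilde m_k(D)g_k\}$ acts on a sequence, so a vector-valued Orlicz bound is really used. That bound is available:
\begin{itemize}
\item The diagonal kernel has $\mathcal{L}(\ell^2)$-norm $\sup_k|K_k(x)|\lesssim|x|^{-n}$, and likewise $\lesssim|x|^{-n-1}$ for the gradient. Benedek--Calder\'on--Panzone then gives $L^p(\ell^2)$ bounds for all $1<p<\infty$.
\item The elementary Marcinkiewicz-type modular argument goes through verbatim for $\ell^2$-valued functions. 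Split $\vec f$ according to whether $\|\vec f(x)\|_{\ell^2}$ exceeds $\lambda$, use the $L^{p_0}$ and $L^{p_1}$ bounds with $1<p_0<p^-\le p^+<p_1$, and integrate against $a(\lambda)\,d\lambda$ using \eqref{eq.p}.
\item The paper itself already relies on a vector-valued Orlicz inequality of this kind, Fefferman--Stein \cite[Corollary 5.4.1]{HH}, in the atomic decomposition.
\end{itemize}

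\emph{What each route buys.} Yours gives an explicit lifting isomorphism $G_{-s}\colon F^{A,2}_s\to F^{A,2}_0$. It also gives a converse valid for arbitrary $u\in\mathcal{S}'$ through $(L^{\hat A})^*=L^A$, whereas the paper reduces to $\hat u$ with compact support and leaves the density step in the converse implicit. The paper's route buys independence from any vector-valued singular integral theory.
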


\begin{proof}
Let $u\in H^{s, A}(\R^n)$. By density, we may assume that $\hat u$ has compact support. Write $u=G_s*f$, with $f\in L^A(\R^n)$. Observe that $\hat u=\hat G_s \hat f$ and so
\begin{equation}\label{f hat}
\hat f(\xi) = (1+|\xi|^2)^{s/2}\hat u.
\end{equation}Observe that $f\in \mathcal{S}$.

Let the Rademacher functions $\left\lbrace r_i \right\rbrace_{i=0}^\infty$ given by
\begin{alignat*}{2}
&r_0(t)   = 1          & \quad & \text{for } 0 < t \leq \frac{1}{2}, \\
&r_0(t)   = -1         & \quad & \text{for } \frac{1}{2} < t \leq 1, \\
&r_0(t+1) = r_0(t)     & \quad & \text{for } t \in \mathbb{R} ,      \\
&r_i(t)   = r_0(2^i t) & \quad & \text{for } i \in \mathbb{N}.
\end{alignat*}
It is known that Rademacher functions form an orthonormal system in $L^2(0, 1)$ (see for instance \cite{AH}) and satisfies the Khinchin inequality (\cite[Lemma 4.2.3]{AH}):
\begin{equation}\label{K ineq}
\bigg\|\sum_{i=0}^ma_ir_i(t)\bigg\|_{L^2(0,1)}=\left(\int_0^1\bigg|\sum_{i=0}^ma_ir_i(t) \bigg|^2\,dt\right)^{1/2}= \left(\sum_{i=0}^m|a_i|^2\right)^{1/2}\leq \sqrt{3}\bigg\|\sum_{i=0}^ma_ir_i(t)\bigg\|_{L^1(0,1)}
\end{equation}for any constants $a_i$. Define
$$m(\xi):=\dfrac{1}{\hat \Phi^2(\xi) + \sum_{i=1}^\infty\hat \phi_i^2(\xi)}.$$Then, by \cite[Lemma 4.2.5]{AH}, $m$ can be written, for any $t\in [0, 1]$, as
$$m(\xi):=\dfrac{r_0(t)\hat \Phi(\xi)+\sum_{i=1}^\infty 2^{is}r_i(t)\hat \phi_i(\xi)}{(1+|\xi|^2)^{s/2}},$$
and satisfies the assumptions of \cite[Theorem 4.2.4]{AH}, namely, $m\in L^{\infty}(\R^n)$ and for all $R>0$ and all multiindex $\sigma$ with $|\sigma|\leq k$, with $k$ is the least integer so that $k>n/2$, there holds
\begin{equation*}
\begin{split}
\dfrac{1}{R^n}\int_{\left\lbrace 1/2  R\leq |\xi|\leq 2R \right\rbrace }|R^{|\sigma|}D^\sigma m(\xi)|^2\,d\xi \leq B
\end{split}
\end{equation*}for some $B>0$. Moreover, defining $m_i:=m\hat \phi_i$, $k_i$ the inverse Fourier transform of $m_i$ and setting $M_i:=\sum_{j=-i}^{j=i}m_j$ and $K_i:=\sum_{j=-i}^{j=i}k_j$. Then according to the proof of Theorem 4.2.4 in \cite{AH}, $K_i$ satisfies the assumptions of the Calder\'on-Zygmund Theorem \cite[Theorem 5.1 and Theorem 7.11]{D}. Therefore, by \cite[Corollary 5.4.3]{HH}, the mapping
$$f\to K_i*f=\mathcal{F}^{-1}(M_i\mathcal{F}(f)) \quad \text{is continuous from }L^A(\R^n) \text{ to }L^A(\R^n)$$with a constant independent of $i$. Now, since $\|M_i\|_{L^\infty(\R^n)}$ is uniformly bounded and $M_i(\xi)\to m(\xi)$ for a.e. $\xi \in \R^n$, we have that
$$M_i \to m \text{ in }\mathcal{S}' \text{ as }i\to \infty.$$Since the inverse Fourier transform is an isomorphism in $S'$, we have
$$K_i \to T\text{ in $\mathcal{S}'$ as }i\to \infty$$for some distribution $T$ so that $\hat T=m$. The norm in $L^A(\R^n)$ of the mapping $f \to T* f$ with $f\in \mathcal{S}$ is
\begin{equation*}
\begin{split}
\sup_{\|f\|_{L^A(\R^n)}\leq 1}\|T(f)\|_{L^A(\R^n)} &= \sup\left\lbrace \left|\int_{\R^n}(T*f)g \,dx\right|: f, g\in S,\|f\|_{L^A(\R^n)}\leq 1, \|g\|_{L^{\hat A}(\R)}\leq 1 \right\rbrace\\& =\sup\left\lbrace \left|T(f*g)\right|: g\in S,\|f\|_{L^A(\R^n)}\leq 1, \|g\|_{L^{\hat A}(\R)}\leq 1 \right\rbrace.
\end{split}
\end{equation*}Now, since
$$T(f*g)=\lim_{i\to \infty}K_i(f*g)\leq C\|f\|_{L^A(\R^n)}\|g\|_{L^{\hat A}(\R)}\leq C,$$we conclude, by density, that convolution with $T$ is continuous from $L^A(\R^n)$ to $L^A(\R^n)$. In particular,
\begin{equation}\label{LT1}
\|\mathcal{F}^{-1}(m\hat{f})\|_{L^A(\R^n)}\leq C\|f\|_{L^A(\R^n)}.
\end{equation}Now, turning back to $f$ so that \eqref{f.1} holds, we have
\begin{equation}\label{LT2}
\begin{split}
\|\mathcal{F}^{-1}(m\hat{f})\|_{L^A(\R^n)}&=\bigg\|\mathcal{F}^{-1}\left(\left(r_0(t)\hat \Phi(\xi)+\sum_{i=1}^\infty 2^{is}r_i(t)\hat \phi_i(\xi)\right)\hat{u}\right)\bigg\|_{L^A(\R^n)}\\ & = \bigg\| r_0(t)\Phi*u+\sum_{i=1}^\infty 2^{is}r_i(t)\phi_i*u\bigg\|_{L^A(\R^n)}.
\end{split}
\end{equation}Observe that by the assumptions on $\hat u$, the above sums are indeed finite. Hence, from \eqref{LT1} and \eqref{LT2},
\begin{equation}\label{ineq norms}
\bigg\| r_0(t)\Phi*u\sum_{i=1}^\infty 2^{is}r_i(t)\phi_i*u\bigg\|_{L^A(\R^n)} \leq C\|f\|_{L^A(\R^n)} = C\|u\|_{H^{s, A}(\R^n)}.
\end{equation}Integration in $t$ gives
\begin{equation}\label{ineq 34}
\int_{0}^1\bigg\| r_0(t)\Phi*u+\sum_{i=1}^\infty 2^{is}r_i(t)\phi_i*u\bigg\|_{L^A(\R^n)}\,dt \leq  C\|u\|_{H^{s, A}(\R^n)}.
\end{equation}Now, applying Khinchin inequality \eqref{K ineq} and \eqref{ineq 34}
\begin{equation}\label{end first part}
\begin{split}
\left\|\left(|\Phi*u|^2+\sum_{i=1}^\infty 2^{2is}|\phi_i*u|^2\right)^\frac12\right\|_{L^A(\R^n)} &\leq C\left\| \int_{0}^1\left(r_0(t)\Phi*u+\sum_{i=1}^\infty 2^{is}r_i(t)\phi_i*u\right)\,dt\right\|_{L^A(\R^n)}\\ & \leq C\int_0^1\bigg\|r_0(t)\Phi*u+\sum_{i=1}^\infty 2^{is}r_i(t)\phi_i*u\bigg\|_{L^A(\R^n)}\,dt \leq  C\|u\|_{H^{s, A}(\R^n)}.
\end{split}
\end{equation}We point out that the inequality
$$\bigg\| \int_{0}^1 g(t, \cdot)\,dt\bigg\|_{L^A(\R^n)}\,dt \leq C\int_0^1\bigg\|g(t, \cdot)\bigg\|_{L^A(\R^n)}\,dt$$follows by the dual characterization of the Orlicz norm and Holder's inequality. Indeed,
\begin{equation}
\begin{split}\left\| \int_{0}^1 g(t, \cdot)\,dt\right\|_{L^A(\R^n)}\,dt&=\sup\left\lbrace \int_{\R^n}\bigg|\left( \int_{0}^1 g(t, x)\,dt\right)h(x) \bigg|\,dx: \|h\|_{L^{\hat A}(\R^n)}\leq 1\right\rbrace\\& \leq \sup\left\lbrace \int_{0}^1\int_{\R^n} |g(t, x)||h(x) |\,dx\,dt: \|h\|_{L^{\hat A}(\R^n)}\leq 1\right\rbrace \\& \leq C\sup\left\lbrace\int_0^1\left\|g(t, \cdot)\right\|_{L^A(\R^n)}\cdot\|h\|_{L^{\hat A}(\R^n)}\,dt: \|h\|_{L^{\hat A}(\R^n)}\leq 1\right\rbrace \\& \leq C\int_0^1\left\|g(t, \cdot)\right\|_{L^A(\R^n)}\,dt.
\end{split}
\end{equation}
Observe that \eqref{end first part} is enough to prove the inclusion $H^{s, A}(\R^n)\subset F_s^{A, 2}(\R^n)$, since
\begin{equation}\label{final first part}
\begin{split}
\|u\|_{F_s^{A,2}(\R^n)} &= \|\Phi*u\|_{L^A(\R^n)}+\|\left\lbrace 2^{si}\phi_i*u\right\rbrace\|_{L^A(\ell^2)}\\ &= \|(|\Phi*u|^2)^{1/2}\|_{L^A(\R^n)}+\bigg\|\left(\sum_{i=1}^\infty 2^{2is}|\phi_i*u|^2\right)^{1/2} \bigg\|_{L^A(\R^n)}\\ & \leq 2\bigg\|\left(|\Phi*u|^2+\sum_{i=1}^\infty 2^{2is}|\phi_i*u|^2\right)^{1/2}\bigg\|_{L^A(\R^n)} \leq  C\|u\|_{H^{s, A}(\R^n)}.
\end{split}
\end{equation}

To prove the other inclusion, let
$$h:=\mathcal{F}^{-1}\left((\hat \Phi^2 + \sum_{i=1}^\infty \hat \phi_i^2)\hat f \right), \quad \text{that is }f=\mathcal{F}^{-1}(m\mathcal{F}(h)).$$Hence, for some $C>0$, using \eqref{LT1} we get
\begin{equation}\label{norm f and h}
\|f\|_{L^A(\R^n)}\leq C\|h\|_{L^A(\R)}. 
\end{equation}Let now $g\in \mathcal{S}$ such that $\|g\|_{L^{\hat A}(\R^n)}=1$, with compact support, and
\begin{equation}\label{second cond g}
\int_{\R^n}gh\,dx \geq \dfrac{1}{2}\|h\|_{L^A(\R^n)}.
\end{equation}Moreover, we may write $g=G_s*v$, where
$$\hat v(\xi)=(1+|\xi|^2)^{s/2}\hat g(\xi).$$
By using \eqref{norm f and h}, \eqref{second cond g} we obtain that, 
\begin{align*}
\|u\|_{H^{s, A}(\R^n)}&=\|f\|_{L^A(\R^n)} \leq C\int_{\R^n}gh\,dx = C\int_{\R^n}\hat g \hat h \,dx\\& =		C\int_{\R^n}\hat f \hat g \left(\hat \Phi^2 + \sum_{i=1}^\infty \hat \phi_i^2\right)\,d\xi\\& = C\int_{\R^n}\left((\hat u \hat \Phi)(\hat v \hat \Phi)+\sum_{i=1}^\infty (2^{is}\hat u \hat \phi_i)(2^{-is}\hat v \hat \phi_i)\right)\,d\xi.
\end{align*}
In the previous expression the constant $C$ may change from line to line. Applying Plancherel's formula and Cauchy's inequalities, we obtain
\begin{equation*}
\begin{split}
\|u\|_{H^{s, A}(\R^n)}& \leq C\int_{\R^n}\left((\Phi * u)(\Phi * v)+\sum_{i=1}^\infty (2^{is}\phi_i* u )(2^{-is}\phi_i* v)\right)dx\\ & \leq C\int_{\R^n}\left((\Phi * u)(\Phi * v) +\left(\sum_{i=1}^\infty 2^{2si}|\phi_i*u|^2 \right)^{1/2}\left(\sum_{i=1}^\infty 2^{-2si}|\phi_i*v|^2 \right)^{1/2}\right)dx
.\end{split}
\end{equation*}
Finally using Holder's inequality in the previous expression yields
\begin{equation*}
\begin{split}
\|u\|_{H^{s, A}(\R^n)}& \leq  C\bigg\{\|\Phi*u\|_{L^A(\R^n)}\|\Phi*v\|_{L^{\hat A}(\R^n)}\\&\qquad+\left\|\left( \sum_{i=1}^\infty 2^{2si}|\phi_i*u|^2 \right)^{1/2}\right\|_{L^A(\R^n)}\left\|\left( \sum_{i=1}^\infty 2^{-2si}|\phi_i*v|^2 \right)^\frac12\right\|_{L^{\hat A}(\R^n)}\bigg\}\\& \leq C\bigg\{\|(|\Phi*u|^2)^\frac12\|_{L^A(\R^n)}+\bigg\|\left(\sum_{i=1}^\infty 2^{2is}|\phi_i*u|^2\right)^\frac12 \bigg\|_{L^A(\R^n)} \bigg\}\\& \qquad \times \bigg\{\|(|\Phi*v|^2)^{1/2}\|_{L^{\hat A}(\R^n)}+\bigg\|\left(\sum_{i=1}^\infty 2^{-2is}|\phi_i*v|^2\right)^\frac12 \bigg\|_{L^{\hat A}(\R^n)} \bigg\}\\& \leq C\|u\|_{F_s^{A, 2}(\R^n)}\|v\|_{H^{-s, \hat A}(\R^n)},
\end{split}
\end{equation*}
where we have used \eqref{final first part} for $u$ and $v$.  But
$$\|v\|_{H^{-s, \hat A}(\R^n)}=\|g\|_{L^{\hat A}(\R^n)}=1.$$This concludes the proof of the theorem.
\end{proof}

\subsection{An atomic decomposition in Orlicz-Lizorkin-Triebel spaces}

We start with an elementary lemma which provides a representation for any $u\in F_s^{A,q}(\R^n)$.
\begin{lemma}Let $s>0$. Any $u\in F_s^{A,q}(\R^n)$ admits the decomposition
\begin{equation}\label{representation}
u=\Phi*u+\sum_{k=1}^\infty\phi_k*u.
\end{equation}
\end{lemma}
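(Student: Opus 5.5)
The plan is to prove that the partial sums $S_N u:=\Phi*u+\sum_{k=1}^N\phi_k*u$ converge to $u$, first in $\mathcal{S}'$ and then in $L^A(\R^n)$. Convergence in $L^A$ is what makes \eqref{representation} an identity between regular distributions.

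\textbf{Telescoping and convergence in $\mathcal{S}'$.} By construction $\phi_k=\Phi_k-\Phi_{k-1}$ and $\Phi_0=\Phi$, so the sum telescopes:
$$S_Nu=\Phi_N*u,\qquad \widehat{S_Nu}=\hat\Phi(2^{-N}\xi)\,\hat u.$$
For a test function $\psi\in\mathcal{S}$ we have $\langle \Phi_N*u,\psi\rangle=\langle u,\tilde\Phi_N*\psi\rangle$, where $\tilde\Phi_N(x)=\Phi_N(-x)$. Hence it suffices to show that $\tilde\Phi_N*\psi\to\psi$ in $\mathcal{S}$. On the Fourier side this reads
$$\hat\Phi(-2^{-N}\xi)\hat\psi(\xi)\to\hat\psi(\xi)\quad\text{in }\mathcal{S}.$$
The difference $(1-\hat\Phi(-2^{-N}\xi))\hat\psi(\xi)$ vanishes on the ball $|\xi|<2^{N-1}$. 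Every derivative of $\hat\Phi(-2^{-N}\cdot)$ is bounded uniformly in $N$, since each derivative only brings down a factor $2^{-N}\le 1$. So every Schwartz seminorm of the difference is controlled by $\sup_{|\xi|\ge 2^{N-1}}(1+|\xi|)^{M}|\partial^\beta\hat\psi|$, which tends to $0$. Therefore $S_Nu\to u$ in $\mathcal{S}'$.

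\textbf{Convergence in $L^A$.} Next I will show that the series actually converges in $L^A(\R^n)$, and this is where $s>0$ enters. Put $g_k:=2^{sk}\phi_k*u$. Pointwise, for $q>1$, Hölder in $\ell^q$ gives
$$\sum_{k>N}|\phi_k*u|=\sum_{k>N}2^{-sk}|g_k|\le\Big(\sum_{k>N}2^{-skq'}\Big)^{1/q'}\|g(x)\|_q\le C_s\,2^{-sN}\|g(x)\|_q ,$$
where $q'$ is the conjugate exponent. The function $x\mapsto\|g(x)\|_q$ lies in $L^A$ by the definition of $F_s^{A,q}(\R^n)$. The lattice property of the Luxemburg norm then yields
$$\Big\|\sum_{k>N}|\phi_k*u|\Big\|_{L^A}\le C_s2^{-sN}\|u\|_{F^{A,q}_s}\to0 .$$
So $\sum_k\phi_k*u$ converges absolutely a.e., and in $L^A$, to some $w$ with $\Phi*u+w\in L^A(\R^n)$.

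\textbf{Identifying the limit.} Convergence in $L^A$ implies convergence in $\mathcal{S}'$: by Hölder's inequality in Orlicz spaces, $\int|F\psi|\le 2\|F\|_{L^A}\|\psi\|_{L^{\hat A}}$, and $\mathcal{S}\subset L^{\hat A}$. The latter inclusion holds because $\hat A(t)\le C t$ for small $t$ and $\psi$ is bounded and rapidly decaying. Hence the $L^A$ limit $\Phi*u+w$ coincides with the $\mathcal{S}'$ limit $u$, which gives \eqref{representation}, the series converging in $L^A(\R^n)$.

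The main obstacle is the $\mathcal{S}$-convergence $\tilde\Phi_N*\psi\to\psi$, and in particular the uniform control of the derivatives of the dilated cutoff. The $L^A$ tail estimate is routine once the Hölder step in $\ell^q$ is combined with the geometric decay $2^{-sN}$.
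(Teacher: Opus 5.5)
Your proof is correct, but it is organized differently from the paper's.

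\textbf{The paper's route.} The paper uses the hypothesis $u\in F^{A,q}_s(\R^n)$ and $s>0$ to show that, for each $\varphi\in\mathcal{S}$, the numerical series $\sum_k\int(\phi_k*u)\varphi\,dx$ converges absolutely. It does this with the same H\"older step in $\ell^q$ with weights $2^{-sk}$ that you use, but applied after pairing with $\varphi$ and followed by Orlicz--H\"older. It then identifies the $\mathcal{S}'$-limit by applying the Fourier transform and invoking $\hat\Phi+\sum_k\hat\phi_k=1$.

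\textbf{Your route.} You telescope to $S_Nu=\Phi_N*u$ and prove $\Phi_N*u\to u$ in $\mathcal{S}'$ directly, via $\hat\Phi(-2^{-N}\cdot)\hat\psi\to\hat\psi$ in $\mathcal{S}$. This holds for every tempered distribution. It is exactly the fact the paper uses tacitly when it writes $\hat\Phi\hat u+\sum_k\hat\phi_k\hat u=\hat u$: a pointwise partition of unity alone does not give this identity for a general $\hat u\in\mathcal{S}'$.

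\textbf{What each buys.} Your argument makes the identification step explicit. It also shows that the hypotheses $u\in F^{A,q}_s(\R^n)$ and $s>0$ are not needed for the representation as an identity in $\mathcal{S}'$. You use them only to upgrade the convergence, through the pointwise bound $\sum_{k>N}|\phi_k*u|\le C_s2^{-sN}\|g(x)\|_q$. That gives absolute a.e.\ convergence and convergence in $L^A(\R^n)$ with rate $2^{-sN}$, a stronger conclusion than the paper states. The paper's route is shorter but leaves the Fourier identification implicit.

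\textbf{A small precision.} For the inclusion $\mathcal{S}\subset L^{\hat A}(\R^n)$, use convexity: $\hat A(t)\le \hat A(1)\,t$ on $[0,1]$. Take $\lambda\ge\|\psi\|_\infty$, so that $\int\hat A(|\psi|/\lambda)\,dx\le \hat A(1)\|\psi\|_{L^1}/\lambda$, which is finite and at most $1$ for $\lambda$ large.
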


\begin{proof}
Let $u\in F_s^{A,q}(\R^n)$. First, we show that the series 
$$\sum_{k=1}^\infty \phi_k*u$$converges in $\mathcal{S}'$. Indeed, let $\varphi\in \mathcal{S}$, we need to prove that the limit
\begin{equation}\label{desired conv}
\begin{split}
\lim_{N\to \infty}\left(\sum_{k=1}^N \phi_k*u\right)(\varphi)=\lim_{N\to \infty}\sum_{k=1}^N \int_{\R^n} (\phi_k*u)\varphi\,dx 
\end{split}
\end{equation}exists. Now, by Cauchy's and Holder's  inequality
\begin{equation}\label{bound N}
\begin{split}
\sum_{k=1}^N \bigg|\int_{\R^n} (\phi_k*u)\varphi\,dx\bigg|  &\leq  \sum_{k=1}^N \int_{\R^n} |2^{sk} \phi_k*u||2^{-sk}\varphi|\,dx \\&\leq \int_{\R^n}\left(\sum_{k=1}^N  |2^{sk}\phi_k*u|^q\right)^{1/q}\left(\sum_{k=1}^N |2^{-sk}\varphi|^{q'}\right)^{1/q'}\,dx \qquad (q'=q/(q-1))\\& \leq C\int_{\R^n}\left(\sum_{k=1}^N  |2^{sk}\phi_k*u|^q\right)^{1/q}|\varphi|\,dx
\\ & \leq C \bigg\|\left(\sum_{k=1}^N  |2^{sk}\phi_k*u|^q\right)^{1/q}\bigg\|_{L^A(\R^n)}\|\varphi\|_{L^{\hat A}(\R^n)}.
\end{split}
\end{equation}Since the latter term in \eqref{bound N} can be bounded independently of $N$ because $u\in F_s^{A,q}(\R^n)$, the convergence in \eqref{desired conv} holds. 

Now, since the Fourier transform is continuous in $\mathcal{S}'$ and is one-to-one, from
$$\mathcal{F}\left(\Phi*u + \sum_{k=1}^\infty \phi_k*u\right) = \hat\Phi\hat u + \sum_{k=1}^{\infty}\hat \phi_k \hat u= \hat u$$we conclude \eqref{representation}.
\end{proof}

We shall decompose further this representation for $s>0$. Given $i\in \mathbb{N}$ and $k\in \mathbb{Z}^n$, let $x_{ik}=2^{-i}k$ and we define the dyadic cuber $Q_{ik}$ by setting
$$x\in Q_{ik} \text{ if and only if }x=x_{ik}+2^{-i}y, \text{ for }0\leq y_j<1,\,j=1, ..., n.$$We denote by $\chi_{ik}$ the characteristic function of the dyadic cube $Q_{ik}$. 

Given $m$ a nonnegative integer, we say that a function $a_{ik}\in C^m(\R^n)$ is a $C^m$-atom for the cube $Q_{ik}$ if it satisfies supp $a_{ik} \subset \tilde{Q}_{ik}$, where $\tilde{Q}_{ik}$ is the cube concentric with $Q_{ik}$ with three times its side, and 
\begin{equation}
\sup_{x\in \R^n, |\gamma|\leq m}2^{-i|\gamma|}|D^{\gamma}a_{ik}(x)|\leq 1.
\end{equation}

We have the following dyadic representation in $F_s^{A, 2}(\R^n)$. 
\begin{theorem}\label{atomic decomposition}
Let $u\in F_s^{A, q}(\R^n)$, with $s>0$ and suppose that $A$ and $\hat A$ satisfy the $\Delta_2$-condition. Let $m\in \mathbb{N}$ be given. Then, there exist $C^m$-atoms $\left\lbrace a_{ik}\right\rbrace$ for the dyadic cubes $Q_{ik}$ and constants $s_{ik}$ such that
\begin{equation}\label{first dec}
u(x)=\sum_{i=0}^\infty 2^{-is}u_i(x),\quad \text{with }\,u_i(x)=\sum_{k\in \mathbb{Z}^n}s_{ik}a_{ik}(x),
\end{equation}and denoting $s_{i}(x)=\sum_{k\in \mathbb{Z}^n}s_{ik}\chi_{ik}(x)$, then there is $A>0$ such that
\begin{equation}\label{estimeta l}
\|\left\lbrace s_i \right\rbrace_{i=0}^\infty\|_{L^A(\ell^q)} =\bigg\|\left(\sum_{x\in Q_{ik}}|s_{ik}|^q \right)^{1/q} \bigg\|_{L^A(\R^n)}\leq A\|u\|_{L^{A}(\R^n)}.
\end{equation}
\end{theorem}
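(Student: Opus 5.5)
The plan is to follow the classical Frazier--Jawerth construction, adapted to the Orlicz setting. Start from the representation \eqref{representation}, and write $u_0:=\Phi*u$ and $v_i:=2^{is}\phi_i*u$ for $i\geq 1$, so that
$$u=\sum_{i\ge0}2^{-is}v_i, \qquad v_0:=\Phi*u.$$
Each $v_i$ has Fourier support in $B(0,2^{i})$.

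\textbf{Step 1: building the atoms.} Fix $\theta\in\mathcal{S}$ with $\hat\theta=1$ on $B(0,1)$ and compact Fourier support, and set $\theta_i(x)=2^{in}\theta(2^ix)$. Then $v_i=\theta_i*v_i$. Discretize this convolution on the dyadic grid of level $i$:
$$v_i(x)=\sum_{k\in\mathbb{Z}^n}\int_{Q_{ik}}\theta_i(x-y)v_i(y)\,dy .$$
This is not yet compactly supported, so replace $\theta$ by a smooth partition of unity $\{\psi(\cdot-k)\}$ adapted to the cubes, as in the standard argument. The usable device is the Plancherel--P\'olya--Nikol'skii inequality for band-limited functions. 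Define
$$s_{ik}:=C\sup_{y\in\tilde Q_{ik}}|v_i(y)|, \qquad a_{ik}:=s_{ik}^{-1}\,\chi\bigl(2^i(\cdot-x_{ik})\bigr)\,v_i ,$$
where $\{\chi(\cdot-k)\}$ is a smooth partition of unity with $\operatorname{supp}\chi\subset$ the tripled unit cube. Then $\sum_k s_{ik}a_{ik}=v_i$, and $\operatorname{supp}a_{ik}\subset\tilde Q_{ik}$. The derivative bound $2^{-i|\gamma|}|D^\gamma a_{ik}|\le1$ follows from Bernstein's inequality $\|D^\gamma v_i\|_{L^\infty(\tilde Q_{ik})}\lesssim 2^{i|\gamma|}\sup_{\text{nearby}}|v_i|$. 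Since this needs a slightly enlarged cube, I will take $s_{ik}$ to be the supremum over a $5$-fold dilation. Up to constants, this gives \eqref{first dec}.

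\textbf{Step 2: the coefficient estimate.} This is the main obstacle. The key pointwise bound is the Peetre maximal inequality: for band-limited $v_i$ and any $r>0$,
$$\sup_{|z|\le C2^{-i}}|v_i(x-z)|\le C\bigl(M(|v_i|^r)(x)\bigr)^{1/r},$$
with $M$ the Hardy--Littlewood maximal operator. Hence $s_i(x)\le C\bigl(M(|v_i|^r)(x)\bigr)^{1/r}$. I then need a vector-valued Fefferman--Stein inequality in $L^A(\ell^q)$:
$$\Bigl\|\Bigl(\sum_i (M g_i)^{q/r}\Bigr)^{r/q}\Bigr\|_{L^{A_r}}\lesssim\Bigl\|\Bigl(\sum_i |g_i|^{q/r}\Bigr)^{r/q}\Bigr\|_{L^{A_r}}, \qquad A_r(t):=A(t^{1/r}).$$
Choose $r<\min\{1,q,p^-\}$, so that $A_r$ still satisfies $\Delta_2$ with lower index $p^-/r>1$ and its conjugate is also $\Delta_2$. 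The Orlicz version can be obtained by extrapolation, in the same spirit as the use of \cite[Corollary 5.4.3]{HH} above: the $\ell^{q/r}$-valued maximal operator is bounded on $L^p(w)$ for all $w\in A_p$ (Andersen--John), and Rubio de Francia extrapolation then transfers this to Orlicz spaces whose Boyd indices lie in $(1,\infty)$. This is exactly what \eqref{eq.p} for $A_r$ gives.

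\textbf{Step 3: conclusion.} Combining Steps 1 and 2,
$$\bigl\|\{s_i\}\bigr\|_{L^A(\ell^q)}\le C\bigl\|\{2^{is}\phi_i*u\}_{i\ge0}\bigr\|_{L^A(\ell^q)}\le C\|u\|_{F^{A,q}_s(\R^n)} .$$
I expect the norm on the right of \eqref{estimeta l} should read $\|u\|_{F_s^{A,q}}$ rather than $\|u\|_{L^A}$, and I will prove it in that form. Finally, convergence of $\sum_i 2^{-is}u_i$ in $\mathcal{S}'$ follows from the preceding lemma, since $s>0$.
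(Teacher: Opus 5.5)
The construction you outline (cutoffs at scale $2^{-i}$ times the Littlewood--Paley pieces $v_i$, normalization, a pointwise maximal-function bound for $s_i$, then vector-valued Fefferman--Stein) is the paper's. You are also right that the right-hand side of the coefficient estimate must be $\|u\|_{F_s^{A,q}(\R^n)}$. That is what the paper's last step actually gives, and a bound by $\|u\|_{L^A(\R^n)}$ cannot hold for $s>0$: combined with the converse theorem it would force $\|u\|_{F^{A,q}_s}\lesssim\|u\|_{L^A}$.

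\textbf{The gap is in Step 1.} There is no local Bernstein inequality $\|D^\gamma v_i\|_{L^\infty(\tilde Q_{ik})}\lesssim 2^{i|\gamma|}\sup_{\lambda Q_{ik}}|v_i|$ with a constant independent of $v_i$, for any fixed dilation $\lambda$. The derivatives of a band-limited function depend on its values on all of $\R^n$. In one dimension at scale $i=0$, take $N$ odd, $T_N$ the Chebyshev polynomial, and
\[
g_N(x)=T_N(x/\lambda)\Bigl(\frac{\sin(x/(N+2))}{x/(N+2)}\Bigr)^{N+2}.
\]
Its spectrum lies in $[-1,1]$, since it is a polynomial times a function whose Fourier transform is supported there. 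Moreover $g_N$ is bounded and integrable, and $|g_N|\le 1$ on $[-\lambda,\lambda]$, yet $|g_N'(0)|=|T_N'(0)|/\lambda=N/\lambda$. So with $s_{ik}=C\sup_{\lambda Q_{ik}}|v_i|$, no fixed $C$ makes your $a_{ik}$ satisfy $2^{-i|\gamma|}|D^\gamma a_{ik}|\le 1$. The decomposition into $C^m$-atoms is therefore not established.

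\textbf{The repair is what the paper does.} Put the derivatives into the coefficient,
\[
s_{ik}:=\max_{x\in\tilde Q_{ik},\,|\gamma|\le m}2^{-i|\gamma|}|D^\gamma(\eta_{ik}v_i)(x)|,
\]
so the atom normalization is automatic, and then \emph{bound} $s_{ik}$ instead of deducing derivative bounds from it. Since $v_i=\Phi_{i+1}*v_i$, one has $D^\gamma v_i=(D^\gamma\Phi_{i+1})*v_i$. The decay $|D^\gamma\Phi_{i+1}(z)|\le C_N2^{i(n+|\gamma|)}(1+2^i|z|)^{-N}$, together with $|x-y|\lesssim 2^{-i}$ for $x\in\tilde Q_{ik}$ and $y\in Q_{ik}$, gives $2^{-i|\gamma|}|D^\gamma v_i(x)|\le C\,Mv_i(y)$. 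By Leibniz this yields $s_i\le C\,Mv_i$ pointwise. The Peetre estimate for derivatives, $\sup_{|z|\lesssim 2^{-i}}2^{-i|\gamma|}|D^\gamma v_i(x-z)|\lesssim (M|v_i|^r(x))^{1/r}$, would also work.

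With this in place, your Step 2 machinery is correct but unnecessary. The choice $r<1$, the rescaled function $A_r$, and extrapolation from Andersen--John are not needed. Since $p^->1$ and $q>1$, you can take $r=1$ and apply the Orlicz Fefferman--Stein inequality in $L^A(\ell^q)$ directly, as the paper does. Your heavier route is the one that would still work in quasi-Banach ranges ($q\le 1$ or lower index $\le 1$), which do not arise here.
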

\begin{proof}We mainly follow the proof of Theorem 4.6.2 in \cite{AH}. Let $u\in F_s^{A, q}(\R^n)$ and write \eqref{representation} as 
$$u=\sum_{i=0}^\infty 2^{-is}u_i.$$We set $\eta\in C_0^{\infty}(Q_{00})$ with $\int\eta\,dx=1$, $\eta_i(x)=2^{in}\eta(2^ix)$, and define $\eta_{ik}=\eta_i*\chi_{ik}$. Then supp $\eta_{ik}\subset \tilde{Q}_{ik}$ and $\sum_{k\in \mathbb{Z}^n}\eta_{ik}=1$. Let
$$b_{ik}=\eta_{ik}u_i, \quad s_{ik}=\max_{x\in \tilde{Q}_{ik}, |\gamma|\leq m}2^{-i|\gamma|}|D^\gamma b_{ik}(x)|$$and
$$a_{ik}(x)=\dfrac{b_{ik}(x)}{s_{ik}}.$$Observe that $a_{ik}$ are $C^m$-atoms and since
$$u_i=\sum_{k\in \mathbb{Z}^n}\eta_{ik}u_i=\sum_{k\in \mathbb{Z}^n}b_{ik}= \sum_{k\in \mathbb{Z}^n}s_{ik}a_{ik},$$we get from \eqref{representation} that
$$u=\sum_{i=0}^\infty 2^{-is}\sum_{k\in \mathbb{Z}^n}s_{ik}a_{ik}.$$This proves \eqref{first dec}. Moreover, since $\hat \Phi_{n+1}=1$ on supp $\hat u_i$, we get since Fourier transform is a bijection, that
$$u_i=\Phi_{i+1}*u_i.$$Thus, by \cite[Lemma 4.3.7]{AH}, for any $x\in \tilde{Q}_{ik}$ and $|\gamma|\leq m$, we get as in \cite[Theorem 4.6.2]{AH} that
\begin{equation}\label{Harnack}
\begin{split}
|D^\gamma u_n(x)|\leq |D^\gamma \Phi_{n+1}|*|u_n|(x) \leq A2^{i|\gamma|}\inf_{y\in Q_{ik}}Mu_i,
\end{split}
\end{equation}where $M$ is the Hardy-Littlewood maximal function. Moreover, by Leibniz formula for differentiation,
$$s_{ik}\leq A\max_{x\in \tilde{Q}_{ik}, |\gamma|\leq m}2^{-i|\gamma|}|D^\gamma u_i(x)|$$and hence by \eqref{Harnack} $s_{ik}\leq AMu_i(x)$ for all $x\in Q_{ik}$. In this way, we get
$$s_i(x)=\sum_{k\in \mathbb{Z}^n}s_{ik}\chi_{ik}(x)\leq AMu_i(x).$$The estimate \eqref{estimeta l} now follows from the Fefferman-Stein maximal theorem for Orlicz spaces (see \cite[Corollary 5.4.1]{HH}). This ends the proof of the theorem. 
\end{proof}

The converse of Theorem \ref{atomic decomposition} is also true.
\begin{theorem} \label{atomic decomposition.2}
Let $s>0$, $A$ a Young function so that $A$ and $\hat A$ satisfy the $\Delta_2$-condition and suppose that $m>s$ is an integer. Let $\left\lbrace a_{ik}\right\rbrace$ be a sequence of $C^m$-atoms for the cubes $\left\lbrace Q_{ik} \right\rbrace$ and let the coefficients $\left\lbrace s_{ik}\right\rbrace$ such that $\|\left\lbrace s_i\right\rbrace_{i=0}^\infty\|_{L^A(\ell^q)}<\infty$. Then, the function
$$u(x)=\sum_{i=0}^\infty 2^{-si}\sum_{k\in \mathbb{Z}^n}s_{ik}a_{ik}(x)$$belongs to $F_s^{A, q}(\R^n)$ and there is $C>0$ such that
\begin{equation}
\|u\|_{F_s^{A, q}(\R^n)} \leq C\|\left\lbrace s_i\right\rbrace_{i=0}^\infty\|_{L^A(\ell^q)}=C\bigg\| \left( \sum_{x\in Q_{ik}}|s_{ik}|^q\right)^{1/q}\bigg\|_{L^A(\R^n)}.
\end{equation}
\end{theorem}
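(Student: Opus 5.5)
The plan follows the classical argument for the converse atomic decomposition (Theorem 4.6.3 in \cite{AH}), replacing the $L^p$ tools by their Orlicz analogues: the Fefferman--Stein vector-valued maximal inequality in $L^A(\ell^q)$ (\cite[Corollary 5.4.1]{HH}), which is available because $A$ and $\hat A$ satisfy the $\Delta_2$-condition. Write $u_i=\sum_k s_{ik}a_{ik}$. Since the cubes $\tilde Q_{ik}$, $k\in\mathbb{Z}^n$, have bounded overlap and $|a_{ik}|\le 1$, we get $|u_i(x)|\le C\sum_k |s_{ik}|\chi_{\tilde Q_{ik}}(x)\le C\,M s_i(x)$. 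Hence each $u_i$ is a locally integrable function. The series $\sum_i 2^{-si}u_i$ converges in $\mathcal{S}'$, by the same Hölder argument as in the representation lemma \eqref{representation}, since $s>0$.

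\textbf{Pointwise estimate for $2^{sj}\phi_j*u$.} Split the sum into $i\ge j$ and $i<j$.

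For $i\ge j$, use only the size of the atoms:
$$|\phi_j*u_i|\le \int|\phi_j(x-y)||u_i(y)|\,dy\le C\,Mu_i(x)\le C\,M(Ms_i)(x).$$

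For $i<j$, use smoothness. The function $\hat\phi_j$ vanishes near the origin, so $\phi_j$ has vanishing moments of all orders. Taylor-expand $a_{ik}$ to order $m$ around $x$; the $C^m$-atom bound $|D^\gamma a_{ik}|\le 2^{i|\gamma|}$ then gives
$$|\phi_j*a_{ik}(x)|\le C\,2^{(i-j)m}\int|\phi_j(x-y)|\,|2^j(x-y)|^m\,\chi_{\tilde Q_{ik}}(y)\,2^{jn}\,dy,$$
and summing in $k$ yields $|\phi_j*u_i(x)|\le C\,2^{(i-j)m}Ms_i(x)$.

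Therefore
$$2^{sj}|\phi_j*u|\le C\sum_{i\ge j}2^{(j-i)s}M(Ms_i)+C\sum_{i<j}2^{(i-j)(m-s)}Ms_i.$$
Both geometric kernels $2^{-|l|\,\min(s,m-s)}$ are summable because $s>0$ and $m>s$. By Young's inequality for sequences in $\ell^q$, we obtain
$$\Bigl(\sum_j|2^{sj}\phi_j*u|^q\Bigr)^{1/q}\le C\Bigl(\sum_i |M(Ms_i)|^q\Bigr)^{1/q}+C\Bigl(\sum_i|Ms_i|^q\Bigr)^{1/q}.$$
The low-frequency term $\Phi*u$ is handled in the same way as the $i\ge j$ case with $j=0$, and is bounded by $C\,M(Ms_0)$ summed with the factor $2^{-is}$.

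\textbf{Conclusion.} Take the $L^A$ norm and apply the Fefferman--Stein inequality in $L^A(\ell^q)$ twice, which uses $1<q<\infty$ and the $\Delta_2$-condition on $A$ and $\hat A$. This gives $\|u\|_{F^{A,q}_s}\le C\|\{s_i\}\|_{L^A(\ell^q)}$.

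\textbf{Main obstacle.} The hardest step is the $i<j$ estimate with the vanishing moments. Controlling the Taylor remainder against $Ms_i$ requires care when $|x-y|\gtrsim 2^{-i}$. I would first try the bound
$$\int|\phi_j(z)|\,(2^j|z|)^m\,dz\le C,$$
combined with the fact that $\sum_k |s_{ik}|\,\chi_{\tilde Q_{ik}}$ averaged against an $L^1$-normalized rapidly decaying kernel is dominated by $Ms_i$. If the remainder term spoils this, I would instead expand $\phi_j$, and place the derivatives on $a_{ik}$ through an integration by parts $m$ times.
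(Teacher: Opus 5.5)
Your proposal is correct and is essentially the paper's argument: the paper also follows Adams--Hedberg's Theorem 4.6.3. From their Lemmas 4.6.4--4.6.5 it imports exactly your two pointwise bounds: size of the atoms when the atom level is at least the frequency level, and vanishing moments of $\phi_j$ plus Taylor expansion otherwise. These give the summable kernel $c_j$, and the proof closes with Young's inequality in $\ell^q$ and the Orlicz Fefferman--Stein inequality.

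The only blemish is your intermediate Taylor display. The factor $2^{jn}$ is stray, and the remainder $a_{ik}-P_x$ is not supported in $\tilde Q_{ik}$ when $x$ is near the cube. As you anticipated, the bound $|\phi_j*u_i(x)|\le C\,2^{(i-j)m}Ms_i(x)$ still holds. One way to get it is from $|\phi_j*a_{ik}(x)|\le C_N 2^{(i-j)m}(1+2^i\operatorname{dist}(x,Q_{ik}))^{-N}$ together with $|s_{ik}|\le C(1+2^i\operatorname{dist}(x,Q_{ik}))^{n}Ms_i(x)$.
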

\begin{proof}
The proof follows as in \cite[Theorem 4.6.3]{AH}, so we provide some details. Define for $j\in \mathbb{Z}$
$$c_j:=\begin{cases} 2^{-j(m-s)} \quad & \text{if }j\geq 0\\
2^{js} \quad & \text{if } j\leq -1.
\end{cases}$$

 With the help of \cite[Lemma 4.6.4 and Lemma 4.6.5]{AH}, there holds
\begin{equation}\label{eqq1}
|\Phi* u(x)|\leq C\sum_{j=0}^\infty 2^{-sj}Ms_j(x)= C\sum_{j=0}^\infty c_{-j}Ms_j(x),
\end{equation}and for $i\geq 1$,
\begin{equation}\label{eqq2}
2^{si}|\phi_i*u(x)|\leq C\sum_{j=0}^i2^{-(i-j)(m-s)}Ms_j(x)+A\sum_{j=i+1}^\infty 2^{(i-j)s}Ms_j(x)= C\sum_{j=0}^\infty c_{i-j}Ms_j(x),
\end{equation}where $M$ denotes the Hardy-Littlewood maximal function. Let $c=\{c_j\}_{j=-\infty}^\infty$ and $Ms(x)=\{Ms_j(x)\}_{j=-\infty}^\infty$ where $Ms_j(x)=0$ for $j<0$, and let its convolution
$$(c*Ms(x))(i)= \sum_{j=-\infty}^\infty c_{i-j}Ms_j(x).$$Since $m>s$, the series $\sum_{-\infty}^\infty|c_j|$ converges. Then, by \eqref{eqq1} and \eqref{eqq2},  and the Minkowski's inequality $\|c*Ms(x)\|_{\ell^q}\leq C\|c\|_{\ell^1}\|Ms(x)\|_{\ell^q}$, we get
\begin{equation*}
\begin{split}
&\bigg\|\left(\Phi*u(x)+\sum_{i=1}^\infty| 2^{is}\phi_i*u(x)|^q \right)^{1/q}\bigg\|_{L^A(\R^n)} \leq C\bigg\|\left(\sum_{i=-\infty}^\infty |(c*Ms(x))(i)|^q \right)^{1/q}\bigg\|_{L^A(\R^n)}\\ & \qquad\qquad = C\bigg\|\|c*Ms(x)\|_{\ell^q(\mathbb{Z})}\bigg\|_{L^A(\R^n)} \leq C\|c\|_{\ell^1(\mathbb{Z})}\|\|Ms(x)\|_{\ell^q(\mathbb{N})}\|_{L^A(\R^n)}.
\end{split}
\end{equation*}The conclusion follows from the Fefferman-Stein maximal theorem for Orlicz spaces (see \cite[Corollary 5.4.1]{HH}).
\end{proof}

\end{document}